\providecommand{\tabularnewline}{\\}
\def\blfootnote{\gdef\@thefnmark{}\@footnotetext}
\newcommand{\cblue}[1]{{\color{black}{#1}}}
\newcommand{\cred}[1]{{\color{black}{#1}}}
\newcommand{\cgreen}[1]{{\color{black}{#1}}}
\newcommand{\TheTitle}{DSA Preconditioning for DG discretizations of $S_{N}$\\transport on High-Order curved meshes}
\newcommand{\TheAuthors}{T. S. Haut, B. S. Southworth, P. G. Maginot, and V. Z. Tomov}
\title{{\TheTitle}}
\author{  Terry~S.~Haut
  \thanks{Lawrence Livermore National Laboratory,
          (\email{haut3@llnl.gov}, \email{tomov2@llnl.gov}).}
\and
  Ben~S.~Southworth
  \thanks{Department of Applied Mathematics,
          University of Colorado at Boulder
          (\email{ben.s.southworth@gmail.com}).}
\and
  Peter~G.~Maginot
  \thanks{Los Alamos National Laboratory
          (\email{pmaginot@lanl.gov}).}
\and
  Vladimir Z. Tomov\footnotemark[2] 
}
\ifpdf\hypersetup{  pdftitle={\TheTitle},
  pdfauthor={\TheAuthors}
}
\begin{document}
\maketitle
\allowdisplaybreaks 

\begin{abstract}
This paper derives and analyzes new diffusion synthetic acceleration (DSA) preconditioners 
for the $S_N$ transport equation when discretized with a high-order (HO) discontinuous Galerkin (DG) discretization. DSA preconditioners
address the need to accelerate the $S_N$ transport equation when the mean free path $\varepsilon$ of particles is small and the condition number of
the $S_N$ transport equation scales like  $\mathcal{O}\left( \varepsilon^{-2} \right)$.
By expanding the $S_N$ transport operator in  $\varepsilon$ and employing a rigorous singular matrix perturbation
analysis, we derive a DSA matrix that reduces to the symmetric interior penalty (SIP) DG discretization of the standard
continuum diffusion equation when the mesh is first-order and the total opacity is constant. We prove that preconditioning the 
HO DG $S_N$ transport equation with the SIP DSA matrix results in an $\mathcal{O}\left( \varepsilon \right)$ perturbation of
the identity, and fixed-point iteration therefore converges rapidly for optically thick problems. 
However, the SIP DSA matrix is conditioned like $\mathcal{O}\left( \varepsilon^{-1} \right)$, making it difficult to invert for small $\varepsilon$.
We further derive a new two-part, additive DSA preconditioner based on a continuous Galerkin discretization of diffusion-reaction, which
has a condition number independent of $\varepsilon$, and prove that this DSA variant has the same theoretical efficiency as the
SIP DSA preconditioner in the optically thick limit.
The analysis is extended to the case of HO (curved) meshes, where so-called mesh cycles can result from elements both 
being upwind of each other (for a given discrete photon direction). In particular, we prove that performing two additional transport 
sweeps, with fixed scalar flux, in between DSA steps yields the same theoretical conditioning of fixed-point iterations as in the cycle-free case.
Theoretical results are validated by numerical experiments on a HO, highly
curved 2D and 3D meshes that are generated from an arbitrary Lagrangian-Eulerian hydrodynamics code, where the additional inner
sweeps between DSA steps offer up to a $4\times$ reduction in total number of sweeps required for convergence.
\end{abstract}
  
\blfootnote{This work was performed under the auspices of the U.S. Department of Energy by Lawrence Livermore National Laboratory under contracts DE-AC52-07NA27344, B614452, and B627942, Lawrence Livermore National Security, LLC (LLNL-JRNL-759881). This work was  performed under the auspices of the U.S. Department of Energy under grant number (NNSA) DE-NA0002376. Disclaimer: This document was prepared as an account of work sponsored by an agency of the United States government. Neither the United States government nor Lawrence Livermore National Security, LLC, nor any of their employees makes any warranty, expressed or implied, or assumes any legal liability or responsibility for the accuracy, completeness, or usefulness of any information, apparatus, product, or process disclosed, or represents that its use would not infringe privately owned rights. Reference herein to any specific commercial product, process, or service by trade name, trademark, manufacturer, or otherwise does not necessarily constitute or imply its endorsement, recommendation, or favoring by the United States government or Lawrence Livermore National Security, LLC. The views and opinions of authors expressed herein do not necessarily state or reflect those of the United States government or Lawrence Livermore National Security, LLC, and shall not be used for advertising or product endorsement purposes. }

\section{Introduction}
\subsection{Background}

The $S_{N}$ transport equation forms a key component in modeling
the interaction of radiation and a background medium, and its accurate
solution is critical in the simulation of astrophysics,
interial confinement fusion, and a number of other fields. In this paper,
we derive and analyze diffusion-based preconditioners for a high-order
(HO) discontinuous Galerkin (DG) discretization of the monoenergetic
$S_{N}$ transport equations in the challenging (but typical) case
of scattering-dominated regimes. One motivation of this research is
in the context of HO arbitrary Lagrangian-Eulerian (ALE) hydrodynamics
on HO (curved) meshes \cite{blastALE}\textcolor{blue}, where standard diffusion-based preconditioners
are inadequate.

The standard approach for solving the $S_{N}$ transport equations
involves a fixed-point iteration, referred to as source iteration
in the transport literature. It is well known that source iteration
can converge arbitrarily slowly in the optically thick limit of large
scattering and small absorption. To quantify this, it is useful to introduce the diffusion scaling. In particular, let $\varepsilon$
be a non-dimensional parameter representing the ratio of a typical mean free path of a particle to a dimension of the domain
\cite{larsen1974asymptotic}. \cred{ Then, for characteristic mesh spacing $h_{\mathbf{x}}$ and total cross section $\sigma_t$, the optically
thick limit corresponds to $ \varepsilon/\left( h_{\mathbf{x}} \sigma_t \right) \sim \varepsilon^2  \sigma_a/ \sigma_t  \ll 1$ }
In this case, the matrix corresponding to source iteration
has a condition number that scales like $ \left( h_{\mathbf{x}} \sigma_t /\varepsilon \right) ^{2}$ and,
therefore, will converge very slowly without specialized preconditioners.
Such preconditioners typically involve a two-level acceleration scheme
and fall within two broad classes: (i) using a diffusion equation
to solve for a corrected scalar flux, referred to as diffusion synthetic
acceleration (DSA), and (ii) solving the $S_{N}$ transport equations
with a reduced number of angular quadrature points, referred to as
transport synthetic acceleration (TSA) \cgreen{(cf. \cite{Ramone-Adams-Nowak-1997}, \cite{Zika-Adams-2000}, \cite{Larsen-Nowak-Hanshaw-2003}). This paper focuses on DSA-type
algorithms. An excellent discussion on the historical development
of DSA can be found in \cite{Larsen_1984}.}

Some of the earliest work on accelerating transport equations with
a diffusion-based preconditioner can be found in, for example,
\cite{Lebedev_1969,Marchuk_Lebedev_1986,Kopp_1963}. It was shown
in \cite{Gelbard_Hageman_1969,Reed_1971} that diffusion-based
acceleration for source iteration is effective for fine spatial meshes ($ \varepsilon \geq h_{\mathbf{x}} \sigma_t $)
but its performance can degrade for coarse meshes (that is, $ \varepsilon \ll h_{\mathbf{x}} \sigma_t $ ). Further seminal
work in \cite{Alcouffe-1977} contained a derivation and theory for
a diffusion-equation accelerator whose discretization is consistent
with the $S_{N}$ transport diamond-difference scheme (a finite-volume
type scheme for transport), and which yields fast acceleration independent
of the spatial mesh size. Since, DSA methods have been significantly refined
and expanded to other spatial discretizations \cite{Larsen-1982,
Larsen-McCoy-1982,Larsen_1984,Adams-William-1992,Wareing-2001,Warsa-Wareing-Morel-2002,Adams-Larsen-2002}.
In the context of HO DG discretizations, the authors in \cite{Wang-Ragusa-2010} 
develop a modified symmetric interior penalty (MIP) DSA scheme
for HO DG (on first-order meshes) and numerically demonstrate that source iteration
converges rapidly with the MIP DSA preconditioner. 

\cblue{
Here we present a rigorous, discrete analysis of DSA in the context of HO DG,
on potentially HO (curved) meshes. The paper proceeds as follows. Section \ref{sec:High-order-DG-and}
introduces the DG discretization of the $S_{N}$ transport equations, as well as
the standard fixed-point iteration to solve the discrete $S_{N}$ system, known as ``source iteration.''
The primary theoretical contributions are formally stated in
Section \ref{subsec:Statement-of-theorems}, with the proofs provided in Section
\ref{sec:Proofs}. As a byproduct of this analysis, three new DG DSA preconditioners are developed that are effective for HO discretizations on HO meshes. 
Two of these preconditioners reduce to interior penalty DG discretizations of diffusion when the mesh is straight-edged (i.e. non-curved); the third preconditioner
is new even for straight-edged meshes, and avoids the numerical difficulties associated with inverting the interior penalty DSA matrices while provably having the same efficacy in the thick limit. Section
\ref{subsec:Connection to previous work} relates our analysis to both the modified
interior penalty (MIP) preconditioner \cite{Wang-Ragusa-2010}
and the consistent DSA preconditioner \cite{Warsa-Wareing-Morel-2002}. 
In Section \ref{sec:numerical}, the efficacy of our DSA preconditioners is demonstrated for
HO DG discretizations on highly curved 2D and 3D meshes generated by \cite{blastALE}
(a HO ALE hydrodynamics code). With the newly developed HO DSA algorithm and DSA
discretization, rapid fixed-point convergence is obtained for all tested values of
the mean free path (while iterations diverge on the HO mesh without the proposed
algorithmic modification). Numerical results also demonstrate the new additive DSA
preconditioner to be robust on optically thick and thin problems in one spatial dimension,
with preconditioning in the optically thick limit equally as effective as traditional
DSA using our new discretization. Brief conclusions are given in Section \ref{sec:Conclusions}. 

\subsection{Outline of contributions}

In general, the discrete source-iteration propagation operator has singular
modes with singular values on the order of $\mathcal{O}(\varepsilon^{2})$, where $\varepsilon$ is
the characteristic mean free path. These modes are referred to as the near nullspace of source iteration
and are extremely slow to converge when $\varepsilon\ll 1$. By directly expanding the discrete
DG source-iteration operator in $\varepsilon$, we derive a DSA preconditioner that exactly
represents the problematic error modes that are slow to decay. 
For first-order meshes and constant opacities, we also show that the DSA matrix exactly
corresponds to the symmetric interior penalty (SIP) DG discretization of the diffusion equation.
In Theorem~\ref{th:MIP}, we prove that the corresponding DSA-preconditioned $S_{N}$ transport equations
is an $\mathcal{O}\left(\varepsilon\right)$ perturbation of the identity,
and the resulting fixed-point iteration therefore converges rapidly for sufficiently small mean
free path. In the optically thick limit of $ \varepsilon/\left( h_{\mathbf{x}} \sigma_t \right) \sim \varepsilon^2  \sigma_a/ \sigma_t  \ll 1$
(and assuming constant total opacity and a first order mesh), this diffusion discretization
is identical to the MIP DSA preconditioner that is numerically analyzed in \cite{Wang-Ragusa-2010}, and  Theorem~\ref{th:MIP} provides a rigorous justification for its efficacy.
In Section \ref{subsec:Connection to previous work} we discuss
stabilization in thin regimes, and formulate a nonsymmetric interior penalty
(IP) DSA preconditioner as an alternative to the SIP DSA approach.

It turns out the SIP DSA matrix is in the form of a
singular matrix perturbation: the dominant term is of order $1/\varepsilon$
relative to the other terms, and has a nullspace consisting of continuous
functions with zero boundary values. This term acts as a large penalization and
constrains the solution to be continuous in the limit of $\varepsilon\rightarrow0$.
This term also leads to the SIP DSA matrix having a condition number
that scales like $\mathcal{O}(1/\varepsilon)$, one of the primary reasons that
DG DSA discretizations such as SIP can be difficult to precondition (although see \cite{O'Malley_Kophazi_2017, Warsa-Wareing-Morel-2003,  Antonietti-Sarti-Verani-Zikatanov-2017} for 
several approaches to preconditioning these systems). 
Appealing to the singular perturbation, we then derive a two-part additive DSA
preconditioner based on projecting onto the spaces of continuous and discontinuous
functions. Theorem~\ref{th:new_DSA} proves that the resulting preconditioned
$S_{N}$ transport equation fixed-point iteration is also an $\mathcal{O}\left(\varepsilon\right)$
perturbation of the identity and therefore has the same theoretical efficiency as
the SIP DSA preconditioner in the optically thick limit. Moreover, the condition numbers
of linear systems in the additive preconditioner are independent of $\varepsilon$.
We note that the leading order term in this two-part additive DSA preconditioner
corresponds to the continuous Galerkin (CG) discretization of the
diffusion equation obtained in \cite{Guermond-Kanschat-2010}.

Next, we modify the analysis to account for HO curved meshes. In
this case, neighboring mesh elements can both be upwind of each other,
leading to so-called mesh cycles. With mesh cycles, the discrete
streaming plus collision operator that is inverted in source iteration
is no longer block lower triangular in any element ordering, and so it 
cannot be easily inverted through a forward solve.
We prove in Theorem~\ref{th:mesh_cycles} that performing two additional
transport sweeps on the $S_{N}$ transport equations, with a fixed
scalar flux, yields a preconditioner that has the same asymptotic
efficiency as that obtained on cycle-free meshes.

Finally, we perform a series of numerical tests to compare the behavior,
in thick and thin regimes, of the three major preconditioning approaches
presented in this work, namely, the SIP DSA (Theorem \ref{th:MIP}),
its IP modification (Section \ref{sec:consistent discretization}), and the
additive DSA preconditioner (Theorem \ref{th:new_DSA}).}

\section{High-Order (HO) Discontinuous Galerkin (DG) discretization of $S_N$ transport and the need for preconditioning in scattering dominated regimes}\label{sec:High-order-DG-and}

\subsection{DG discretization}

Consider the mono-energetic, steady-state, discrete-ordinates linear
Boltzmann equation \cred{with isotropic scattering}, given by
\begin{equation}
\begin{split}
\boldsymbol{\Omega}_{d}\cdot\nabla_{\mathbf{x}}\psi_{d}\left(\mathbf{x}\right)+\frac{\sigma_{t}\left(\mathbf{x}\right)}{\varepsilon}\psi_{d}\left(\mathbf{x}\right) & =\frac{1}{4\pi}\left(\frac{\sigma_{t}\left(\mathbf{x}\right)}{\varepsilon}-\varepsilon\sigma_{a}\left(\mathbf{x}\right)\right)\sum_{d'=1}^{N_{\Omega}}w_{d'}\psi_{d'}\left(\mathbf{x}\right)+\varepsilon q_{d}\left(\mathbf{x}\right),  \,\,\,\, \mathbf{x} \in \mathcal{D} \\
\psi_{d}\left(\mathbf{x}\right) & =\psi_{d,\text{inc}}\left(\mathbf{x}\right),\,\,\,\,\mathbf{x}\in \partial{\mathcal{D}} \,\,\,\,\text{and}\,\,\,\,\mathbf{n}\left(\mathbf{x}\right)\cdot\boldsymbol{\Omega}_{d}<0.
\label{eq:linear SN transport}
\end{split}
\end{equation}
\cred{In equation (\ref{eq:linear SN transport}), $\mathcal{D}$ denotes the spatial domain with boundary $\partial{\mathcal{D}}$, $\psi_{d}\left(\mathbf{x}\right)$ denotes the specific intensity associated with the discrete ordinate direction $\boldsymbol{\Omega}_{d}$, and  $q_{d}\left(\mathbf{x}\right)$ denotes a fixed (direction-dependent) source.}
Here, the total opacity, $\varepsilon\sigma_{t}^{-1}\left(\mathbf{x}\right)$,
and the absorption opacity, $\varepsilon\sigma_{a}\left(\mathbf{x}\right)$,
are scaled according to the diffusion limit, where $\varepsilon$
is a non-dimensional parameter proportional to the characteristic mean free path and which goes to zero in the optically
thick limit \cite{larsen1974asymptotic}. The quadrature angle vectors $\boldsymbol{\Omega}_{d}\in\mathbb{S}^{2}$
and weights $w_{d}>0$ are constructed to have desirable symmetry
properties and integrate spherical harmonics up to a given degree
that depends on the number of angles, $N_{\Omega}$.

We consider a discontinuous Galerkin (DG) discretization of the $S_{N}$
transport equation. To do so, we set some notation.
First, consider a decomposition of the domain $\mathcal{D}$ in to a set $\mathcal{E}$ of
\cgreen{high-order (curved)} elements $ \kappa \in \mathcal{E}$,
and let $\mathcal{F}$ denote the set of interior and boundary finite
element faces $\Gamma \in \mathcal{F}$. \cred{We further decompose the set $\mathcal{F} = \mathcal{F}_{\text{int}} \cup \mathcal{F}_{\text{ext}}$
into the set of interior$ \mathcal{F}_{\text{int}}$ faces and the set  $\mathcal{F}_{\text{ext}}$ of boundary faces.}
\cgreen{
The finite element space $\mathcal{U}$ corresponds to the collection of
piecewise polynomial functions of fixed degree $r$ on each reference element
$\hat{\kappa}$, $\mathcal{P}_r\left( \hat{\kappa} \right)$, 
$$
 \mathcal{U} = \left\{ u \in L^2 \left( \mathcal{D} \right) :
                       \hat{u} \in \mathcal{P}_r\left( \hat{\kappa} \right)
               \right\} .
$$
The values of $u$ \cred{in} physical space are obtained simply by
$u(x) = \hat{u}(\hat{x}) = \sum_{i = 1}^{N_{\kappa}} \hat{v}_i(\hat{x}) u_i$,
where $\hat{x} \rightarrow x$ is the mapping from reference to physical coordinates,
$\{ \hat{v}_i \}_1^{N_{\kappa}}$ is the basis of $\mathcal{U}$ on $\hat{\kappa}$, and
$\boldsymbol{u} = (u_1 \dots u_{N_{\kappa}})$ are the finite element coefficients of u.
Basis functions in physical space are also obtained by $v(x) = \hat{v}(\hat{x})$.
The order $r$ of the solution space $\mathcal{U}$ is generally
independent of the order of the mesh.
All methods presented in this work are fully algebraic and do not
involve geometric operations, thus they are independent of the discrete mesh
representation; interested readers can find technical details about our mesh
representation approach in \cite{Dobrev2012}.
}
For an interior mesh face $\Gamma \in \mathcal{F}$ shared by two neighboring elements $\kappa$ and $\kappa'$, we let $\mathbf{n}$ denote the normal vector that points from $\kappa$ and $\kappa'$. Given this (fixed but arbitrary) choice for the sign of the normal vector $\mathbf{n}$ on each element face, the jump $\left\llbracket u\right\rrbracket $
and average $\left\{ u\right\} $ for a function $u \in \mathcal{U}$ are defined by
\[
\left\llbracket u\right\rrbracket =\begin{cases}
u_{\kappa}-u_{\kappa'}, & \,\,\,\,\,\,\text{if \ensuremath{\Gamma} is an interior face shared by elements \ensuremath{\kappa} and \ensuremath{\kappa'}},\\
u_{\kappa}, & \,\,\,\,\,\,\text{\text{if \ensuremath{\Gamma} is a boundary face of element \ensuremath{\kappa},}}
\end{cases}
\]
and 
\[
\left\{ u\right\} =\begin{cases}
\left(u_{\kappa}+u_{\kappa'}\right)/2, & \,\,\,\,\,\,\text{if \ensuremath{\Gamma} is an interior face shared by elements \ensuremath{\kappa} and \ensuremath{\kappa'}},\\
u_{\kappa}, & \,\,\,\,\,\,\text{\text{if \ensuremath{\Gamma} is a boundary face of element \ensuremath{\kappa}}.}
\end{cases}
\]
Although the definitions of the jump $\left\llbracket u\right\rrbracket$ and average $\left\{ u\right\}$ depend on arbitrarily choosing a sign for the normal vector $\mathbf{n}$, it turns out that the bilinear forms below are invariant with respect to this choice.

Following the standard DG discretization procedure and using
upwinding to define the numerical flux, (\ref{eq:linear SN transport})
can be discretized as
\begin{equation}
\boldsymbol{\Omega}_{d}\cdot\mathbf{G}\boldsymbol{\psi}^{(d)}+F^{(d)}\boldsymbol{\psi}^{(d)}+\frac{1}{\varepsilon}M_{t}\boldsymbol{\psi}^{(d)}-\frac{1}{4\pi}\left(\frac{1}{\varepsilon}M_{t}-\varepsilon M_{a}\right)\boldsymbol{\varphi}=\frac{1}{4\pi}\left(\boldsymbol{q}_{\text{inc}}^{(d)}+\varepsilon\boldsymbol{q}^{(d)}\right).\label{eq:Tmat1}
\end{equation}
Here the vector $\boldsymbol{\varphi}$ of coefficients for the scalar flux $\varphi$ is given by
\begin{equation}
\boldsymbol{\varphi}=\sum_{d}w_{d}\boldsymbol{\psi}^{(d)},\label{eq:scalar flux}
\end{equation}
the vectors $\boldsymbol{q}_{\text{inc}}^{(d)}$ and $ \boldsymbol{q}^{(d)}$ on
the right hand side of (\ref{eq:Tmat1})  correspond to the linear forms
\begin{align}
\left[ \boldsymbol{q}_{\text{inc}}^{(d)} \right]_m & = -
  \sum_{\Gamma\in\mathcal{F_{\text{ext}}}}
  \int_{\Gamma} \boldsymbol{\Omega}_{d} \cdot \mathbf{n}
                v_m \psi_{\text{inc}}^{(d)}~dS +
  \frac{1}{2}\sum_{\Gamma\in\mathcal{F_{\text{ext}}}}
  \int_{\Gamma} \left|\boldsymbol{\Omega}_{d}\cdot\mathbf{n} \right|
                v_m\psi_{\text{inc}}^{(d)}~dS, \\
\left[ \boldsymbol{q}^{(d)} \right]_m & =
  \sum_{\kappa \in \mathcal{E}}\int_{\kappa} v_m q^{(d)} d\mathbf{x},
\end{align}
where $\{ v_m \}_1^N$ is the finite element basis of $\mathcal{U}$, and $N$ is
the total number of degrees of freedom in $\mathcal{U}$.
We will also denote by $\mathbf{u}$ and $\mathbf{v}$ the vectors of coefficients
corresponding to some discrete functions $u$ and $v$ in the
finite element space $\mathcal{U}$.
The matrices $\boldsymbol{\Omega}_{d}\cdot\mathbf{G}$, $F^{(d)}$,
$M_{t}$, and $M_{a}$ in equation (\ref{eq:Tmat1})
correspond, respectively, to the bilinear forms,
\begin{align}
\mathbf{v}^T\left(\boldsymbol{\Omega}_{d}\cdot\mathbf{G}\right)\mathbf{u} & =\sum_{\kappa \in \mathcal{E}}\int_{\kappa}\left(\boldsymbol{\Omega}_{d}\cdot\nabla_{\mathbf{x}}u\right)vd\mathbf{x},\label{eq:bilinear form for G}\\
\mathbf{v}^TF^{(d)}\mathbf{u} & =-\sum_{\Gamma\in\mathcal{F}}\int_{\Gamma}\boldsymbol{\Omega}_{d}\cdot\mathbf{n}\left\llbracket u\right\rrbracket \left\{ v\right\} dS+\frac{1}{2}\sum_{\Gamma\in\mathcal{F}}\int_{\Gamma}\left|\boldsymbol{\Omega}_{d}\cdot\mathbf{n}\right|\left\llbracket u\right\rrbracket \left\llbracket v\right\rrbracket dS,\label{eq:Fd bilinear form}\\
\mathbf{v}^TM_{t}\mathbf{u} & =\sum_{\kappa \in \mathcal{E}}\int_{\kappa}\sigma_{t}uvd\mathbf{x},\label{eq:bilinear form for Mt}\\
\mathbf{v}^TM_{a}\mathbf{u} & =\sum_{\kappa \in \mathcal{E}}\int_{\kappa}\sigma_{a}uvd\mathbf{x}.\label{eq:bilinear form for Ma}
\end{align}
Note that in our convention bold symbols indicate vectors and capital (from
the Latin alphabet) symbols indicate matrices. In addition, the notation $\mathbf{G}$ is shorthand for a vector with three matrix components,  $\mathbf{G} = \left( G_{1}, G_{2}, G_{3} \right)  $, so that 
$$
\boldsymbol{\Omega}_{d}\cdot\mathbf{G} = \sum_{j=1}^3  \left( \boldsymbol{\Omega}_{d} \right)_{j} G_j .
$$
\cblue{Also recall that each direction
$\boldsymbol{\Omega}_{d}$ has a corresponding reversed direction
$\boldsymbol{\Omega}_{d'}=-\boldsymbol{\Omega}_{d}$ with identical weight $w_{d'}=w_{d}$,
and note the useful identities, $\sum_{d}w_{d} =4\pi$,
$\sum_{d}w_{d}\boldsymbol{\Omega}_{d}\boldsymbol{\Omega}_{d}^{T}=\frac{4\pi}{3}I$,
$\sum_{d}w_{d}\boldsymbol{\Omega}_{d}=\mathbf{0}$, and
$\sum_{d}w_{d}\boldsymbol{\Omega}_{d}\left|\boldsymbol{\Omega}_{d}\cdot\mathbf{n}\right|=\mathbf{0}$.}

To reformulate equation (\ref{eq:Tmat1}),
define the column vector $\mathbf{\boldsymbol{\psi}=}\left(\boldsymbol{\psi}^{(1)};...;\boldsymbol{\psi}^{(N_{\Omega})}\right)$
and projection
\begin{equation}
 \left(P_{0}\boldsymbol{\psi}\right)^{(d)} = \frac{1}{4\pi}\sum_{d'}w_{d'}\boldsymbol{\psi}_{d'}= \frac{1}{4 \pi} \boldsymbol{\varphi},\,\,\,\,\,d=1,\ldots,N_{\Omega}.\label{eq:P0}
\end{equation}
$P_{0}$ is a weighted average over direction $d$ that
projects the average on to all vector blocks. In the matrix sense,
$P_{0}$ is an $NN_{\Omega}\times NN_{\Omega}$ operator,
where each block row takes the form $\frac{1}{4\pi}[w_{0}I_{N},w_{1}I_{N},...,w_{N_{\Omega}}I_{N}]$.
$P_{0}$ being a projection relies on the fact that $\sum_{d}w_{d}=4\pi$.
Defining 
\begin{equation} \label{eq:W-inner product}
W=\textnormal{diag}\left[w_{0}I_{N},w_{1}I_{N},...,w_{N_{\Omega}}I_{N}\right], \,\,\,\, \langle\mathbf{\mathbf{x}},\mathbf{y}\rangle_{W}=\langle W\mathbf{x},\mathbf{y}\rangle, 
\end{equation}
$P_{0}$ is an orthogonal projection in the $W$-inner product.
Letting $Q_{0}:=I-P_{0}$ denote the orthogonal complement to $P_{0}$,
recall that for any vector $\boldsymbol{\psi}$, $\|\boldsymbol{\psi}\|_{W}=\|P_{0}\boldsymbol{\psi}\|_{W}+\|Q_{0}\boldsymbol{\psi}\|_{W}$.
Now, rewrite \eqref{eq:Tmat1} as
\begin{equation}
\label{eq:Tmat2}
\left[I+\varepsilon M_{t}^{-1}\left(\boldsymbol{\Omega}_{d}\cdot\mathbf{G}+F^{(d)}\right)\right] \boldsymbol{\psi}^{(d)} -
  \frac{1}{4\pi}\left(I-\varepsilon^{2}M_{t}^{-1}M_{a}\right)\boldsymbol{\varphi} = 
  \frac{1}{4\pi}\varepsilon M_{t}^{-1}\left(\boldsymbol{q}_{\text{inc}}^{(d)}+\varepsilon\boldsymbol{q}^{(d)}\right).
\end{equation}
In matrix form, over all angles, the first term in \eqref{eq:Tmat2} operating on $\boldsymbol{\psi}^{(d)}$ is block diagonal in $d$, with each block corresponding
to a fixed direction $\boldsymbol{\Omega}_d$, and the second term a global angular coupling through projection $P_0$. A standard technique in transport
is to invert the first, block-diagonal term.
This approach corresponds to solving the linear transport equation
independently, for all directions $d$, and is known as a transport sweep.
Define $T_{\varepsilon}$ as the block-diagonal operator over direction $d$,
multiplied by $P_{0}$, when a transport sweep is applied:
\cred{
\[
T_{\varepsilon}= \frac{1}{4\pi} \textnormal{diag}_{d}\left[\left(I+\varepsilon M_{t}^{-1}\left(\boldsymbol{\Omega}_{d}\cdot\mathbf{G}+F^{(d)}\right)\right)^{-1}
  \left(I-\varepsilon^{2}M_{t}^{-1}M_{a}\right)\right]P_{0}.
\]
}
Then, equation (\ref{eq:Tmat1}) can be re-written as the preconditioned linear system 
\cred{
\begin{equation}
\label{eq:psi_d}
(I-T_{\varepsilon})\boldsymbol{\psi} = \tilde{\boldsymbol{q}} ,
\end{equation}
where 
\[
\tilde{\boldsymbol{q}}^{(d)}=\left(I+\varepsilon M_{t}^{-1}\left(\boldsymbol{\Omega}_{d}\cdot\mathbf{G}+F^{(d)}\right)\right)^{-1}\frac{1}{4\pi}\varepsilon M_{t}^{-1}\left(\boldsymbol{q}_{\text{inc}}^{(d)}+\varepsilon\boldsymbol{q}^{(d)}\right).
\]
}
Multiplying equation (\ref{eq:psi_d}) by the quadrature
weight, $w_{d}$, and summing over direction index, $d$, yields a
linear system for the scalar flux,
\begin{equation}
\label{eq:sweep linear system}
  \left(I-S_{\varepsilon}\right)\boldsymbol{\varphi} = \mathbf{s},
\end{equation}
where 
\begin{align}
S_{\varepsilon} & =\sum_{d}w_{d}\left(I+\varepsilon M_{t}^{-1}\left(\boldsymbol{\Omega}_{d}\cdot\mathbf{G}+F^{(d)}\right)\right)^{-1}\frac{1}{4\pi}\left(I-\varepsilon^{2}M_{t}^{-1}M_{a}\right),\label{eq:sweep matrix-2}   \\
\mathbf{s} & =\sum_{d}w_{d}\left(I+\varepsilon M_{t}^{-1}\left(\boldsymbol{\Omega}_{d}\cdot\mathbf{}+F^{(d)}\right)\right)^{-1}\varepsilon M_{t}^{-1} \varepsilon\frac{1}{4\pi} \left(\boldsymbol{q}_{\text{inc}}^{(d)}+\boldsymbol{q}^{(d)}\right).\label{eq:sweep right hand side-2}
\end{align}

We note that, in applying the operator
$T_{\varepsilon}$, we need to invert $\left[I+\varepsilon M_{t}^{-1}\left(\boldsymbol{\Omega}_{d} \cdot\mathbf{G}+F^{(d)}\right)\right]$. As it turns out, this is not always computationally tractable, particularly in the case of
HO curved meshes. Theorem \ref{th:mesh_cycles} and Section \ref{subsec:Mesh-cycles} analyze a more general case where
this term is not inverted exactly.

\begin{rem}
\cred{
Our analysis of equation \eqref{eq:psi_d} is valid under the assumption that 
$$ 
\varepsilon  \| M_{t}^{-1} \left(\boldsymbol{\Omega}_d \cdot \mathbf{G} + F^{(d)} \right)  \| < 1 ,\,\,\,\,\,\,\, \varepsilon^2  \| M_{t}^{-1}  M_{a} \| < 1 .
$$ Since  $ \| M_{t}^{-1} \left(\boldsymbol{\Omega}_d \cdot \mathbf{G} + F^{(d)} \right)  \|$ scales like $1/\left( \sigma_t h_{\mathbf{x}} \right)$, where $h_{\mathbf{x}}$ denotes the characteristic mesh spacing, the error bounds in Theorems \ref{th:MIP}-\ref{th:mesh_cycles} below are small as long as
$$ \eta = \min \left\{ \varepsilon/\left( h_{\mathbf{x}} \sigma_t \right), \varepsilon \sqrt{ \sigma_a/ \sigma_t} \right\}  \ll 1 .$$ The regime $\eta \ll 1$ corresponds to the standard optically thick limit.
}
\end{rem}

\subsection{Useful identities}

Next we present two identities that will be used regularly in further derivations.
\cblue{
First, applying integration by parts to the term
\[
  \mathbf{v}^T \boldsymbol{\Omega}_{d}\cdot\mathbf{G} \mathbf{u} =
  \sum_{\kappa \in \mathcal{E}} \int_{\kappa}
    \left(\boldsymbol{\Omega}_{d}\cdot\nabla_{\mathbf{x}}u\right)vd\mathbf{x} 
\]
in 
$\mathbf{v}^T \left( \boldsymbol{\Omega}_{d}\cdot
 \mathbf{G}+F^{(d)} \right) \mathbf{u}$
yields the identity 
\begin{equation}
\label{eq:relation between F and tildeF}
  \boldsymbol{\Omega}_{d}\cdot\mathbf{G}+F^{(d)}=
  -\boldsymbol{\Omega}_{d}\cdot\mathbf{G}^T+\tilde{F}^{(d)},
\end{equation}
where the matrix $\tilde{F}^{(d)}$ corresponding to the bilinear form 
\begin{equation}
\mathbf{v}^T\tilde{F}^{(d)}\mathbf{u}= -\sum_{\Gamma\in\mathcal{F}}\int_{\Gamma}\boldsymbol{\Omega}_{d}\cdot\mathbf{n}\left\{ u\right\} \left\llbracket v\right\rrbracket dS+\sum_{\Gamma\in\mathcal{F}}\int_{\Gamma}\frac{1}{2}\left|\boldsymbol{\Omega}_{d}\cdot\mathbf{n}\right|\left\llbracket u\right\rrbracket \left\llbracket v\right\rrbracket dS.\label{eq:tildeFd bilinear form}
\end{equation}
}

A second property follows immediately from equations (\ref{eq:Fd bilinear form})
and (\ref{eq:tildeFd bilinear form}). Let $P$ denote a projection
onto the space of continuous functions with zero boundary values.
Then, $P\mathbf{v}$ corresponds to a continuous function with zero
boundary value and, therefore, $\left\llbracket v\right\rrbracket =0$
on each interior mesh face $\Gamma$ and $v=0$ on each boundary face.
From expression (\ref{eq:Fd bilinear form}), we see that 
$
\left(P\mathbf{v}\right)^T\tilde{F}^{(d)}\mathbf{u}=\mathbf{v}^TP^T\tilde{F}^{(d)}\mathbf{u}=\mathbf{0},
$
for any $\mathbf{u}$ and $\mathbf{v}$. Since $\mathbf{u}$ and $\mathbf{v}$
are arbitrary, $P^T\tilde{F}^{(d)}=\mathbf{0}$. A similar
identity follows from expression (\ref{eq:tildeFd bilinear form}),
yielding the two identites
\begin{equation}
F^{(d)}P=\mathbf{0},\hspace{8ex}P^T\tilde{F}^{(d)}=\mathbf{0}.\label{eq:F*P and PT * tildeFd}
\end{equation}

\subsection{Need for preconditioning in the optically thick limit}

To motivate DSA and further analysis in this paper, we state the following Proposition which shows
that preconditioning the linear system in (\ref{eq:psi_d})
is important in the optically thick limit of small $\varepsilon$.
The proof of Proposition \ref{prop:condition number} is given in the Appendix. 

\begin{proposition}
\label{prop:condition number}Assume that the matrix $I-T_{\varepsilon}$
in the linear system (\ref{eq:psi_d}) is invertible.
Then the condition number of the matrix $I-T_{\varepsilon}$ from equation
(\ref{eq:psi_d}) satisfies
\[
\operatorname{cond}(I-T_{\varepsilon})=\|I-T_{\varepsilon}\|_W \|(I-T_{\varepsilon})^{-1}\| _W \geq\mathcal{O}\left(\varepsilon^{-2}\right) ,
\]
where the norm $\| \cdot \|_W$ is defined by equation  (\ref{eq:W-inner product}).
In addition, suppose that $E_{\varepsilon}$ inverts $P_{0}\left(I-T_{\varepsilon}\right)P_{0}$
on the range of $P_{0}$ to within $\mathcal{O}\left(\varepsilon\right)$,
$E_{\varepsilon}P_{0}\left(I-T_{\varepsilon}\right)P_{0}=P_{0}+\mathcal{O}\left(\varepsilon\right)$.
Then the preconditioned matrix $\left( \left( I-P_{0} \right) +  E_{\varepsilon}P_{0}\right)\left(I-T_{\varepsilon}\right)$
is an $\mathcal{O}(\varepsilon)$ perturbation of the identity,
\begin{equation}
\left( \left( I-P_{0} \right) +  E_{\varepsilon}P_{0}\right)\left(I-T_{\varepsilon}\right)=I+\mathcal{O}\left(\varepsilon\right).\label{eq:DSA system}
\end{equation}
\end{proposition}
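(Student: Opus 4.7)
The plan rests on the structural identity $T_{\varepsilon} = T_{\varepsilon} P_{0}$, which is immediate from the definition of $T_{\varepsilon}$. This makes $(I-T_{\varepsilon})$ block lower triangular with respect to the $W$-orthogonal decomposition $\mathrm{range}(P_{0})\oplus\mathrm{range}(Q_{0})$: since $T_{\varepsilon}Q_{0}=\mathbf{0}$, the operator restricts to the identity on $\mathrm{range}(Q_{0})$ (giving $\|I-T_{\varepsilon}\|_{W}\geq 1$), while on $\mathrm{range}(P_{0})$ its diagonal component is $P_{0}(I-T_{\varepsilon})P_{0}$ and its off-diagonal component is $-Q_{0}T_{\varepsilon}P_{0}$.

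For the condition number lower bound, I will exhibit an explicit near-null mode. Pick any nonzero $\boldsymbol{\psi}_{0}\in\mathrm{range}(P_{0})$ whose scalar flux $\boldsymbol{\varphi}_{0}$ is the coefficient vector of a continuous finite element function vanishing on $\partial\mathcal{D}$, so that $P\boldsymbol{\varphi}_{0}=\boldsymbol{\varphi}_{0}$ and the identity \eqref{eq:F*P and PT * tildeFd} gives $F^{(d)}\boldsymbol{\varphi}_{0}=\mathbf{0}$ for every direction $d$. I then define the corrected vector $\boldsymbol{\psi}:=\boldsymbol{\psi}_{0}+Q_{0}T_{\varepsilon}\boldsymbol{\psi}_{0}$; a short computation using $(I-T_{\varepsilon})Q_{0}=Q_{0}$ yields $(I-T_{\varepsilon})\boldsymbol{\psi}=P_{0}(I-T_{\varepsilon})P_{0}\boldsymbol{\psi}_{0}$. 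Expanding the resolvent $(I+\varepsilon M_{t}^{-1}(\boldsymbol{\Omega}_{d}\cdot\mathbf{G}+F^{(d)}))^{-1}$ in a Neumann series and using $F^{(d)}\boldsymbol{\varphi}_{0}=\mathbf{0}$ reduces the leading $\mathcal{O}(\varepsilon)$ contribution to $\varepsilon M_{t}^{-1}(\boldsymbol{\Omega}_{d}\cdot\mathbf{G})\boldsymbol{\varphi}_{0}/(4\pi)$ in block $d$; the angular average produced by $P_{0}$ annihilates this term thanks to the quadrature identity $\sum_{d}w_{d}\boldsymbol{\Omega}_{d}=\mathbf{0}$, leaving an $\mathcal{O}(\varepsilon^{2})$ residual. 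Combining $\|(I-T_{\varepsilon})\boldsymbol{\psi}\|_{W}=\mathcal{O}(\varepsilon^{2})\|\boldsymbol{\psi}_{0}\|_{W}$ with the Pythagorean lower bound $\|\boldsymbol{\psi}\|_{W}\geq\|\boldsymbol{\psi}_{0}\|_{W}$ yields $\|(I-T_{\varepsilon})^{-1}\|_{W}\geq\mathcal{O}(\varepsilon^{-2})$, from which the stated condition number estimate follows.

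For the preconditioned identity, I expand
\[
\bigl((I-P_{0})+E_{\varepsilon}P_{0}\bigr)(I-T_{\varepsilon}) = Q_{0}(I-T_{\varepsilon}) + E_{\varepsilon}P_{0}(I-T_{\varepsilon}),
\]
and use $T_{\varepsilon}=T_{\varepsilon}P_{0}$ to rewrite the second summand as $E_{\varepsilon}P_{0}(I-T_{\varepsilon})P_{0}$, which by hypothesis equals $P_{0}+\mathcal{O}(\varepsilon)$. The first summand is $Q_{0}-Q_{0}T_{\varepsilon}P_{0}$, and the zeroth-order expansion $(T_{\varepsilon}\boldsymbol{\psi})^{(d)}=\tfrac{1}{4\pi}\boldsymbol{\varphi}+\mathcal{O}(\varepsilon)$ for $\boldsymbol{\psi}\in\mathrm{range}(P_{0})$ shows that all blocks of $T_{\varepsilon}P_{0}\boldsymbol{\psi}$ agree to leading order, whence $Q_{0}T_{\varepsilon}P_{0}=\mathcal{O}(\varepsilon)$. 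Adding the two contributions gives $I+\mathcal{O}(\varepsilon)$, completing the argument. The main bookkeeping obstacle is the sharper $\mathcal{O}(\varepsilon^{2})$ bound in the first part: a naive expansion only produces $\mathcal{O}(\varepsilon)$, and the improvement hinges precisely on the paired cancellations $F^{(d)}P=\mathbf{0}$ (which reduces $A_{d}\boldsymbol{\varphi}_{0}$ to pure streaming) and $\sum_{d}w_{d}\boldsymbol{\Omega}_{d}=\mathbf{0}$ (which annihilates that streaming after angular averaging), together isolating the near-null subspace of continuous, boundary-vanishing scalar fluxes.
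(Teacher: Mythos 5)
Your proof is correct and follows essentially the same route as the paper's: the same $P_{0}$/$Q_{0}$ splitting, the same near-null mode (a vector in the range of $P_{0}$ whose scalar flux is continuous and vanishes on the boundary, so that $F^{(d)}\boldsymbol{\varphi}_{0}=\mathbf{0}$ and the remaining streaming term is annihilated by $\sum_{d}w_{d}\boldsymbol{\Omega}_{d}=\mathbf{0}$), and the same observation $Q_{0}T_{\varepsilon}P_{0}=\mathcal{O}(\varepsilon)$ for the preconditioned identity. The only cosmetic difference is that you exhibit the near-null vector explicitly as $\boldsymbol{\psi}_{0}+Q_{0}T_{\varepsilon}\boldsymbol{\psi}_{0}$ instead of working through the paper's $2\times 2$ block lower triangular system and the auxiliary vectors $\tilde{\mathbf{x}},\tilde{\mathbf{y}}$.
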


\cred{
The relationship 
\begin{equation}
\left( P_{0} \left(I-T_{\varepsilon}\right) P_0 \boldsymbol{\psi} \right)^{(d)} =
  \left(I-S_{\varepsilon}\right) \frac{\varphi}{4\pi},
  \quad d = 1,\ldots, N_{\Omega},
\end{equation}
connects the Theorems in Section~\ref{subsec:Statement-of-theorems} with Proposition~\ref{prop:condition number}.
}

\section{DSA preconditioners for HO DG discretizations on curved meshes}

\subsection{Overview of the DSA preconditioners and statement of the theorems}

\label{subsec:Statement-of-theorems}

This section presents the main theoretical contributions of this paper, the proofs of which are contained in the following subsections. 

First we present results on a symmetric interior penalty (SIP) DSA
preconditioner. To do so, define the SIP DSA matrix,
\begin{equation}
D_{\varepsilon}=\frac{1}{\varepsilon}F_{0}+D_{0},\label{eq:Deps}
\end{equation}
where

\begin{equation}
D_{0}=\frac{1}{3}\mathbf{G}^{T}\cdot M_{t}^{-1}\mathbf{G}-\tilde{\mathbf{F}}_{1}\cdot M_{t}^{-1}\mathbf{G}+\mathbf{G}^T\cdot M_{t}^{-1}\mathbf{F}_{1}+M_{a},\label{eq:D}
\end{equation}
and

\begin{equation}
F_{0}=\frac{1}{4\pi}\sum_{d}w_{d}F^{(d)},\,\,\,\,\,\mathbf{F}_{1}=\frac{1}{4\pi}\sum_{d}w_{d}\boldsymbol{\Omega}_{d}F^{(d)},\,\,\,\,\tilde{\mathbf{F}}_{1}=\frac{1}{4\pi}\sum_{d}w_{d}\boldsymbol{\Omega}_{d}\tilde{F}^{(d)}.\label{eq:F_def}
\end{equation}
In the previous equations, $\mathbf{F}_{1}$ and  $\tilde{\mathbf{F}}_{1}$ correspond to vectors of matrices; for example, in three spatial dimensions
$$
   \left( \mathbf{F}_{1} \right)_j = \frac{1}{4\pi}\sum_{d}w_{d} \left( \boldsymbol{\Omega}_{d} \right)_j F^{(d)}, \,\,\,\,\, j = 1,2,3.
$$
Assuming that the mesh is first order and that the opacities, $\sigma_{t}$ and $\sigma_{a}$, are constants, it turns out (see Section~\ref{sec:Bilinear-form-for})
that $D_{\varepsilon}$ corresponds to the bilinear form, 
\begin{equation}
\mathbf{v}^TD_{\varepsilon}\mathbf{u}=\mathcal{B}_{\text{SIP}}\left(\cdot,\cdot\right),\label{eq:Deps and BMIP}
\end{equation}
where
\begin{equation}
\begin{split}
\mathcal{B}_{\text{SIP}}\left(u,v\right):= & \frac{1}{\varepsilon}\sum_{\Gamma\in\mathcal{F}}\int_{\Gamma}\alpha\left\llbracket u\right\rrbracket \left\llbracket v\right\rrbracket dS+\sum_{\kappa\in\mathcal{E}}\int_{\kappa}\frac{1}{3\sigma_{t}}\nabla_{\mathbf{x}}u\cdot\nabla_{\mathbf{x}}vd\mathbf{x}+\sum_{\kappa\in\mathcal{E}}\int_{\kappa}\sigma_{a}uvd\mathbf{x} - \\
 & \,\,\,\,\,\sum_{\Gamma\in\mathcal{F}}\int_{\Gamma}\left\llbracket u\right\rrbracket \left\{ \mathbf{n}\cdot\frac{1}{3\sigma_{t}}\nabla_{\mathbf{x}}v\right\} dS-\sum_{\Gamma\in\mathcal{F}}\int_{\Gamma}\left\llbracket v\right\rrbracket \left\{ \mathbf{n}\cdot\frac{1}{3\sigma_{t}}\nabla_{\mathbf{x}}u\right\} dS.\label{eq:BMIP}
\end{split}
\end{equation}
Here, the function $\alpha\left(\cdot\right)$ in the first integral
is defined as
\begin{equation}
\alpha\left(\mathbf{x}\right)= \frac{1}{4\pi} \sum_{d}w_{d}\left|\boldsymbol{\Omega}_{d}\cdot\mathbf{n}\left(\mathbf{x}\right)\right|,\,\,\,\,\mathbf{x}\in\Gamma\in\mathcal{F},\label{eq:alpha}
\end{equation}
and converges to $1/4$ in the limit of a large number of angles,
$\boldsymbol{\Omega}_{d}$. The bilinear form in (\ref{eq:BMIP})
corresponds to a variant of the symmetric interior penalty discretization
of the reaction-diffusion operator,
\[
\nabla_{\mathbf{x}}\cdot\left(\frac{1}{3\sigma_{t}}\nabla_{\mathbf{x}}\right)-\sigma_{a} . \label{eq:diffusion}
\]

Theorem~\ref{th:MIP} shows that preconditioning the fixed-point
iteration based on $(I-S_{\varepsilon})$ (\ref{eq:sweep linear system})
with the DSA matrix $D_{\varepsilon}$ results in fast convergence
in the optically thick limit. 
\begin{theorem}[SIP DSA preconditioner]
\label{th:MIP}Assume that the function $\alpha\left(\cdot\right)$
defined in equation (\ref{eq:alpha}) is uniformly bounded away from
zero on each interior and boundary mesh faces. Then
\[
\left(\varepsilon^{2}D_{\varepsilon}\right)^{-1}M_{t}\left(I-S_{\varepsilon}\right)=I+\mathcal{O}\left(\varepsilon\right).
\]
\end{theorem}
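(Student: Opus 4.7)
The plan is to produce a Taylor expansion of $M_t(I-S_\varepsilon)$ through order $\varepsilon^3$, identify $\varepsilon^2 D_\varepsilon$ as its leading part with a residual $R$ that is confined to the subspace $\mathrm{range}(Q)$ of discontinuous functions (where $Q := I - P$), and then use a singular-perturbation analysis of $D_\varepsilon = \tfrac{1}{\varepsilon}F_0 + D_0$ to conclude that $(\varepsilon^2 D_\varepsilon)^{-1} R = \mathcal{O}(\varepsilon)$.

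Setting $A_d := \boldsymbol{\Omega}_d \cdot \mathbf{G} + F^{(d)}$, I first expand $(I+\varepsilon M_t^{-1}A_d)^{-1}$ in a Neumann series, multiply by $I-\varepsilon^2 M_t^{-1}M_a$, and sum against $w_d$. The angular identities $\sum_d w_d \boldsymbol{\Omega}_d = 0$ and $\sum_d w_d \boldsymbol{\Omega}_d \boldsymbol{\Omega}_d^T = \tfrac{4\pi}{3}I$ reduce the $\varepsilon^0$ term to $I$ and the $\varepsilon^1$ term to $\varepsilon M_t^{-1}F_0$. For the $\varepsilon^2$ term I would apply identity (\ref{eq:relation between F and tildeF}) only to the left factor of $A_d$ in $A_d M_t^{-1}A_d$, rewriting it as $(-\boldsymbol{\Omega}_d \cdot \mathbf{G}^T + \tilde F^{(d)}) M_t^{-1}(\boldsymbol{\Omega}_d \cdot \mathbf{G} + F^{(d)})$. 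The four cross-terms, after weighted summation against $w_d/(4\pi)$ and a second application of the angular identities, collapse into $-\tfrac{1}{3}\mathbf{G}^T\!\cdot\!M_t^{-1}\mathbf{G} - \mathbf{G}^T\!\cdot\!M_t^{-1}\mathbf{F}_1 + \tilde{\mathbf{F}}_1\!\cdot\!M_t^{-1}\mathbf{G} + \tfrac{1}{4\pi}\sum_d w_d \tilde F^{(d)} M_t^{-1} F^{(d)}$. Combining with the $\varepsilon^2 M_a$ contribution coming from $-\varepsilon^2 M_t^{-1}M_a$ yields
$$
M_t(I - S_\varepsilon) \;=\; \varepsilon^2 D_\varepsilon \;-\; \varepsilon^2\,\tfrac{1}{4\pi}\sum_d w_d\,\tilde F^{(d)} M_t^{-1} F^{(d)} \;+\; \mathcal{O}(\varepsilon^3) \;=:\; \varepsilon^2 D_\varepsilon + R,
$$
so the theorem reduces to proving $(\varepsilon^2 D_\varepsilon)^{-1} R = \mathcal{O}(\varepsilon)$.

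For this last step I use the orthogonal splitting $\mathbb{R}^N = \mathrm{range}(P) \oplus \mathrm{range}(Q)$. The bilinear form of $F_0$ reduces to $\tfrac{1}{2}\int_\Gamma \alpha\, \llbracket u \rrbracket \llbracket v \rrbracket\,dS$ (the first-moment piece vanishes by $\sum_d w_d \boldsymbol{\Omega}_d = 0$), so $F_0$ is symmetric with $F_0 P = P^T F_0 = 0$, and the hypothesis that $\alpha$ is uniformly bounded below makes $Q^T F_0 Q$ uniformly coercive on $\mathrm{range}(Q)$. In the $(P,Q)$-block representation of $D_\varepsilon$ the $1/\varepsilon$ penalty therefore lives entirely in the $(Q,Q)$-slot, and a Schur-complement calculation --- assuming $P^T D_0 P$ is invertible, as is standard for the underlying diffusion--reaction operator --- yields $\|D_\varepsilon^{-1} P\| = \mathcal{O}(1)$ and $\|D_\varepsilon^{-1} Q\| = \mathcal{O}(\varepsilon)$. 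Identities (\ref{eq:F*P and PT * tildeFd}) force each $\tilde F^{(d)} M_t^{-1} F^{(d)}$ to map into $\mathrm{range}(Q)$, so the leading part of $R/\varepsilon^2$ picks up the extra factor of $\varepsilon$ when hit by $D_\varepsilon^{-1}$; the $\mathcal{O}(\varepsilon^3)$ tail survives multiplication by the $\mathcal{O}(1)$-bounded $D_\varepsilon^{-1}$ at a cost of $\mathcal{O}(\varepsilon)$.

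The step I expect to be the main obstacle is the singular-perturbation estimate on $D_\varepsilon^{-1}$: verifying uniform coercivity of $P^T D_0 P$ and $Q^T F_0 Q$, and controlling the off-diagonal couplings $P^T D_0 Q$ and $Q^T D_0 P$ uniformly in $\varepsilon$. Once these structural ingredients are in hand, the block factorization of $D_\varepsilon$ delivers the required norm bounds immediately, and the projection identities (\ref{eq:F*P and PT * tildeFd}) are precisely what confine $R$ to the subspace on which $D_\varepsilon^{-1}$ acts with an extra factor of $\varepsilon$ --- the source of the SIP DSA preconditioner's efficacy in the optically thick limit.
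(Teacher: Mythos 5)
Your proposal is correct and follows essentially the paper's own route: the same Neumann expansion giving $M_{t}\left(I-S_{\varepsilon}\right)=\varepsilon^{2}D_{\varepsilon}+\varepsilon^{2}D_{1}+\mathcal{O}\left(\varepsilon^{3}\right)$ with $D_{1}=-\frac{1}{4\pi}\sum_{d}w_{d}\tilde{F}^{(d)}M_{t}^{-1}F^{(d)}$, the same use of the projection identities $F^{(d)}P=\mathbf{0}$ and $P^{T}\tilde{F}^{(d)}=\mathbf{0}$ to confine the residual, and the same singular-perturbation treatment of $\frac{1}{\varepsilon}F_{0}+D_{0}$ based on the $P$/$Q$ splitting of the nullspace of $F_{0}$. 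Your Schur-complement bounds $\|D_{\varepsilon}^{-1}P\|=\mathcal{O}(1)$ and $\|D_{\varepsilon}^{-1}Q\|=\mathcal{O}(\varepsilon)$ are precisely the content of the paper's Lemma~\ref{prop:singular matrix perturbation}, which supplies the expansion $\left(F_{0}+\varepsilon D\right)^{-1}=\frac{1}{\varepsilon}E_{P}+\mathcal{O}(1)$ and the cancellation $E_{P}D_{1}=\mathbf{0}$ used in the paper's proof.
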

Theorem~\ref{th:MIP} states that the preconditioned iteration matrix
looks like the identity plus an $\mathcal{O}(\varepsilon)$ perturbation.
For small $\varepsilon$, this ensures a well-conditioned iteration
matrix and fast convergence. Under the assumptions of Theorem~\ref{th:MIP},
it follows from the identity (see Section \ref{sec:Bilinear-form-for})
\[
\mathbf{v}^TF_{0}\mathbf{u}=\sum_{\Gamma\in\mathcal{F}}\int_{\Gamma}\alpha\left\llbracket u\right\rrbracket \left\llbracket v\right\rrbracket dS
\]
that $F_{0}$ has a nullspace consisting of continuous functions with
zero boundary values. For example, if $\mathbf{u}$ is in the nullspace
of $F_{0}$, then
\[
\mathbf{u}^TF_{0}\mathbf{u}=\frac{1}{\varepsilon}\sum_{\Gamma\in\mathcal{F}}\int_{\Gamma}\alpha\left\llbracket u\right\rrbracket ^{2}dS=0,
\]
and so the jump $\left\llbracket u\right\rrbracket $ must vanish
on each interior mesh face and $u$ must vanish on each boundary face.
It follows that the condition number of $D_{\varepsilon}$ scales
like $\mathcal{O}\left(\varepsilon^{-1}\right)$, and a good preconditioner
is required to efficiently invert the interior penalty DSA matrix.
Unfortunately, HO DG discretizations can prove difficult
for fast linear solvers and preconditioners, such as algebraic multigrid
(AMG), even when considering elliptic problems \cite{Bastian:2012hk,Siefert:2014je,Prill:2009wg,Olson:2011ju}.
This difficulty is compounded on highly unstructured grids, which are some of the motivating problems here.

Fortunately, the proof of Theorem~\ref{th:MIP} also yields a better-conditioned
DSA preconditioner for optically thick problems. In fact, let $P$ denote an (arbitrary)
projection of functions in the DG space onto the continuous functions,
and $Q=I-P$ be its complement.
 Then Theorem~\ref{th:new_DSA}
develops a two-part, additive DSA matrix; a single DSA step involves three applications of
$P\left(P^TD_0P\right)^{-1}P^T$ (that is, solving a continuous Galerkin diffusion discretization), and one 
application of $Q\left(Q^TF_{0}Q\right)^{-1}Q^T$
(solving in the complement). In the optically thick limit, this DSA matrix is proven to have the same
theoretical iteration efficiency as the symmetric interior penalty DSA matrix
discussed in Theorem \ref{th:MIP}, and its application requires inverting matrices with condition
number independent of $\varepsilon$. 
\begin{theorem}
\label{th:new_DSA}Let $P$ denote a projection on to the subspace of $ \mathcal{U}$ containing continuous polynomials with zero boundary values, and let $Q=I-P$. Define
the operators
\[
E_{P}=P\left(P^TD_{0}P\right)^{-1}P^T,\hspace{5ex} E_{Q}=Q\left(Q^TF_{0}Q\right)^{-1}Q^T,
\]
and 
\[
E_{\varepsilon}=\frac{1}{\varepsilon}E_{P}+\left(I-E_{P}D_0\right)E_{Q}\left(I-D_0E_{P}\right),
\]
with $D_0$ as in \eqref{eq:D}. Then
\[
\frac{1}{\varepsilon}E_{\varepsilon}M_{t}\left(I-S_{\varepsilon}\right)=I+\mathcal{O}\left(\varepsilon\right).
\]
~
\end{theorem}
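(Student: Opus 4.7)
The plan is to reduce Theorem~\ref{th:new_DSA} to Theorem~\ref{th:MIP} by showing that $\frac{1}{\varepsilon}E_\varepsilon$ is an approximate left inverse of $\varepsilon^{2}D_\varepsilon$ accurate to the same $\mathcal{O}(\varepsilon)$ order that already appears in Theorem~\ref{th:MIP}. Concretely, I would first prove the algebraic identity $\varepsilon E_\varepsilon D_\varepsilon = I + \varepsilon R$ for a bounded operator $R$ independent of $\varepsilon$, and then combine this with Theorem~\ref{th:MIP} to conclude.

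Two structural facts drive the algebra: (a) $F_0 P = 0 = P^T F_0$, which follows from $P$ projecting onto continuous functions with zero boundary value together with the pure jump-penalty form of $F_0$ (obtained via $\sum_d w_d \boldsymbol{\Omega}_d = \mathbf{0}$ applied to \eqref{eq:Fd bilinear form}); and (b) $E_P D_0 P = P$ and $E_Q F_0 Q = Q$, which fall out of the definitions of $E_P$ and $E_Q$. Together, (a) and (b) give $E_P F_0 = 0$, $E_Q F_0 = Q$, and $(I - D_0 E_P) F_0 = F_0$. Expanding
\[
\varepsilon E_\varepsilon D_\varepsilon = \bigl[E_P + \varepsilon (I - E_P D_0) E_Q (I - D_0 E_P)\bigr] \bigl[\varepsilon^{-1} F_0 + D_0\bigr],
\]
the $\varepsilon^{-1}$ piece vanishes by $E_P F_0 = 0$, the $\varepsilon^{0}$ piece collapses via $E_P D_0 + (I - E_P D_0) Q = E_P D_0 P + Q = P + Q = I$, and the remaining term is $\varepsilon R := \varepsilon (I - E_P D_0) E_Q (I - D_0 E_P) D_0$, which is bounded uniformly in $\varepsilon$.

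With this identity in hand, the conclusion of Theorem~\ref{th:MIP} can be rewritten as $M_t (I - S_\varepsilon) = \varepsilon^{2} D_\varepsilon (I + \varepsilon R')$ for some $R'$ with $\|R'\| = \mathcal{O}(1)$, and pre-multiplying by $\frac{1}{\varepsilon} E_\varepsilon$ yields
\[
\frac{1}{\varepsilon} E_\varepsilon M_t (I - S_\varepsilon) = (\varepsilon E_\varepsilon D_\varepsilon)(I + \varepsilon R') = (I + \varepsilon R)(I + \varepsilon R') = I + \mathcal{O}(\varepsilon).
\]
The main obstacle I anticipate is verifying the uniform bounds that make the above calculation rigorous. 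One must check that $P^T D_0 P$ is invertible on $\mathrm{range}(P)$ (which should follow from ellipticity of the continuous Galerkin diffusion-reaction discretization noted to correspond to \cite{Guermond-Kanschat-2010}) and that $Q^T F_0 Q$ is invertible on $\mathrm{range}(Q)$ (which holds because $Q$ annihilates exactly the nullspace of the symmetric positive semidefinite jump penalty $F_0$). Once these are in place, $E_P$, $E_Q$, and $R$ are all bounded independently of $\varepsilon$, and $\|E_\varepsilon\| = \mathcal{O}(\varepsilon^{-1})$ is driven entirely by the explicit $\frac{1}{\varepsilon} E_P$ term, so the $\mathcal{O}(\varepsilon)$ bookkeeping goes through cleanly.
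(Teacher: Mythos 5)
Your proposal is correct, and it reaches the conclusion by a route that is organized differently from the paper's. The paper proves Theorem \ref{th:new_DSA} by reusing two ingredients from the proof of Theorem \ref{th:MIP}: the expansion $I-S_{\varepsilon}=\varepsilon M_{t}^{-1}\left(F_{0}+\varepsilon(D_{0}+D_{1})\right)+\mathcal{O}(\varepsilon^{3})$ and Lemma \ref{prop:singular matrix perturbation}, which identifies $E_{\varepsilon}=\frac{1}{\varepsilon}E_{P}+(I-E_{P}D_{0})E_{Q}(I-D_{0}E_{P})$ with $(F_{0}+\varepsilon D_{0})^{-1}$ up to $\mathcal{O}(\varepsilon)$ and shows that the $D_{1}$ contribution is absorbed into the remainder. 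You instead treat Theorem \ref{th:MIP} as a black box and verify directly the left-inverse identity $\varepsilon E_{\varepsilon}D_{\varepsilon}=I+\varepsilon R$ with $R=(I-E_{P}D_{0})E_{Q}(I-D_{0}E_{P})D_{0}$, using $P^{T}F_{0}=F_{0}P=0$, $E_{P}D_{0}P=P$, and $E_{Q}F_{0}Q=Q$ (the same structural facts that drive the lemma's $2\times 2$ block elimination); composing with $M_{t}(I-S_{\varepsilon})=\varepsilon^{2}D_{\varepsilon}(I+\varepsilon R')$, which is a restatement of Theorem \ref{th:MIP}, then gives the claim. Your algebra checks out, with $(P^{T}D_{0}P)^{-1}$ and $(Q^{T}F_{0}Q)^{-1}$ understood as inverses on $\operatorname{range}(P^{T})$ and $\operatorname{range}(Q^{T})$ respectively, which is exactly the convention the paper itself uses; and the invertibility caveats you flag are the right ones---note that invertibility of $Q^{T}F_{0}Q$ on $\operatorname{range}(Q^{T})$ rests on the assumption of Theorem \ref{th:MIP} that $\alpha$ is uniformly bounded away from zero, so that the nullspace of $F_{0}$ is exactly $\operatorname{range}(P)$, which meets $\operatorname{range}(Q)$ only in $\mathbf{0}$. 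What your route buys is a shorter, more self-contained argument that never touches $D_{1}$ or the expansion of $I-S_{\varepsilon}$, needing only the statement (not the internals) of Theorem \ref{th:MIP}. What the paper's route buys is economy (Lemma \ref{prop:singular matrix perturbation} is needed for Theorem \ref{th:MIP} anyway) and a constructive asymptotic expansion of $(F_{0}+\varepsilon D_{0})^{-1}$ that explains where the two-part additive preconditioner comes from, rather than verifying it a posteriori.
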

As in Theorem~\ref{th:MIP}, Theorem~\ref{th:new_DSA} proves that
the preconditioned operator is an $\mathcal{O}\left(\varepsilon\right)$
perturbation of the identity and is thus well-conditioned for small
$\varepsilon$, and the corresponding fixed-point iteration will converge
rapidly. Note that, using equation (\ref{eq:BMIP}) for the bilinear form corresponding to $P\left(P^TDP\right)^{-1}P^T$, it is straightforward to see that the matrix $P\left(P^TD_0 P\right)^{-1}P^T$ corresponds to solving a continuous Galerkin discretization of the diffusion equation (\ref{eq:diffusion}) (for constant opacities $\sigma_a$ and $\sigma_t$).
\cgreen{
In practice, the projection matrix $P$ is formed as a sparse matrix that
(i) interpolates the DG solution at the internal (per element) Gauss-Lobatto
nodes of the CG space, (ii) averages overlapping CG degrees of freedom on element
faces, and (iii) zeroes out CG degrees-of-freedom on each mesh boundary face.
Note that a number of works have considered preconditioning elliptic DG discretizations
with a projection onto continuous functions. To our knowledge, this was first considered in the
widely unrecognized paper by Warsa et al. \cite{warsa03}, and has been considered in a number of other
papers more recently \cite{omalley17,bastian12,Antonietti17,pazner2019efficient}.
Such approaches are similar in principle to Theorem \ref{th:new_DSA}, but here we directly
precondition the larger transport iteration by projecting onto continuous functions, rather
than trying to solve the DG DSA matrix with a continuous preconditioner. However, examples of
projections $P$ and $Q$ can be found in \cite{warsa03,omalley17,bastian12,Antonietti17,pazner2019efficient}.
}

The final result of this paper regards applying DSA to HO (curved)
meshes. In particular, consider the general linear system in \eqref{eq:Tmat2}, expressed as a single operator on $\boldsymbol{\psi}$:
\cred{
\begin{align}\label{eq:bigsys}
\left[ \left(I+\varepsilon H\right) -
\left(I-\varepsilon^{2}M_{t}^{-1}M_{a}\right)P_{0}\right] \boldsymbol{\psi} =
\mathbf{s} .
\end{align}
}
Often it is possible to order the mesh elements so that $H=\textnormal{diag}_{d}\left[M_{t}^{-1}\left(\boldsymbol{\Omega}_{d}\cdot\mathbf{G}+F^{(d)}\right)\right]$
is block lower triangular, with blocks corresponding to mesh elements.
In such cases, $I+\varepsilon H$ can be inverted directly to give
the equivalent (but better conditioned) system
\cred{
\begin{equation}
\label{eq:invH}
\left(I-T_{\varepsilon}\right) \boldsymbol{\psi}  =
\left(I+\varepsilon H\right)^{-1} \mathbf{s} ,
\end{equation}
where 
\[
T_{\varepsilon}=\left(I+\varepsilon H\right)^{-1}\left(I-\varepsilon^{2}M_{t}^{-1}M_{a}\right)P_{0}.
\]
}
However, for HO meshes, it is typically the case that $H$ is no longer block lower triangular, and cannot be
easily inverted through a forward solve. Recent work developed a nonsymmetric AMG algorithm that has
proved effective to invert HO DG transport discretizations on HO meshes \cite{air1,air2}, albeit with a
larger overhead cost compared with a forward solve. Alternatively, a graph-based algorithm was developed in \cite{sweep18}
to replace the inversion with a Gauss-Seidel type iteration in a pseudo-optimal ordering when mesh cycles are present.  
To consider an approximate inversion through an ordered Gauss-Seidel, suppose that we choose a mesh element ordering
that leads to a decomposition,
\[
H=H_{\leq}+H_{>},
\]
where 
\[
H_{\leq}=\textnormal{diag}_{d}\left[M_{t}^{-1}\left(\boldsymbol{\Omega}_{d}\cdot\mathbf{G}+F_{\leq}^{(d)}\right)\right],\,\,\,\,\,\,H_{>}=\textnormal{diag}_{d}\left[M_{t}^{-1}F_{>}^{(d)}\right].
\]
Here, we invert $H_{\leq}$ exactly and move $H_{>}$ to the right-hand side. For example, $H_{\leq}$
corresponds to the lower-triangular part of the matrix ordering in \cite{sweep18}, which is inverted in an
ordered Gauss-Seidel iteration. 

The following theorem shows that three transport sweeps with lagging---that is, three applications of $\left(I+\varepsilon H_{\leq}\right)^{-1}$---
yields an efficient preconditioner using the DSA matrix from Theorem~\ref{th:MIP}~or~\ref{th:new_DSA}.

\begin{theorem}\label{th:mesh_cycles}
Let $I - T_{\varepsilon}$ be the preconditioned linear system
in \eqref{eq:invH} that corresponds to applying $(I + \varepsilon H)^{-1}$ as a preconditioner. Define $I - \widetilde{T}_{\varepsilon}$
as the preconditioned linear system associated with applying three iterations of $(I + \varepsilon H_{\leq})^{-1}$ to \eqref{eq:bigsys}, while
keeping the term $\left(I-\varepsilon^{2}M_{t}^{-1}M_{a}\right)P_{0}\boldsymbol{\psi}$ fixed. Then
\[
\widetilde{T}_{\varepsilon}=T_{\varepsilon}+\mathcal{O}\left(\varepsilon^{3}\right),
\]
and, letting $E_{\varepsilon}$ correspond to the DSA preconditioner in Theorem~\ref{th:MIP}~or~\ref{th:new_DSA},
\[
E_{\varepsilon}P_{0}\left(I-T_{\varepsilon}\right)P_{0}=E_{\varepsilon}P_{0}\left(I-\widetilde{T}_{\varepsilon}\right)P_{0}+\mathcal{O}\left(\varepsilon\right).
\]
\end{theorem}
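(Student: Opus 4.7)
My plan is to prove the two claims separately. For $\widetilde{T}_{\varepsilon} = T_{\varepsilon} + \mathcal{O}(\varepsilon^3)$, I would observe that three stationary iterations with $M := I + \varepsilon H_{\leq}$, starting from a zero angular-flux guess and with the scattering source frozen, correspond to truncating the Neumann series for $(I + \varepsilon H)^{-1}$ after three terms. Setting $A := I + \varepsilon H = M - N$ with $N := -\varepsilon H_{>}$, the three-step approximate inverse is $\tilde A^{-1}_3 := \sum_{j=0}^{2} (M^{-1} N)^j M^{-1}$, so the residual satisfies $A^{-1} - \tilde A^{-1}_3 = (M^{-1} N)^3 A^{-1}$. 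Under the standing assumption $\varepsilon\|M_t^{-1}(\boldsymbol{\Omega}_d \cdot \mathbf{G} + F^{(d)})\| < 1$, the operators $M^{-1}$, $A^{-1}$, and $H_{>}$ are all $\mathcal{O}(1)$, so the residual is $\mathcal{O}(\varepsilon^3)$. Substituting into $\widetilde{T}_{\varepsilon} - T_{\varepsilon} = (\tilde A^{-1}_3 - A^{-1})(I - \varepsilon^2 M_t^{-1} M_a) P_0$ and using that the trailing factor is $\mathcal{O}(1)$ delivers the first identity.

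For the second identity, the key auxiliary estimate is $\|E_{\varepsilon}\| = \mathcal{O}(\varepsilon^{-2})$ in the operator norm used in the analysis. In the SIP case, $E_{\varepsilon} = (\varepsilon^{2} D_{\varepsilon})^{-1} M_t$, and I would obtain this bound by splitting any test vector relative to the nullspace of $F_0$ (continuous functions vanishing on $\partial\mathcal{D}$): on the nullspace the operator acts as $\varepsilon^{2} D_0$, which is coercive with constant $\mathcal{O}(\varepsilon^{2})$ (this coercivity is already used in the proof of Theorem~\ref{th:MIP}), while on its $W$-orthogonal complement the $\varepsilon F_0$ term provides the dominant $\mathcal{O}(\varepsilon)$ lower bound, yielding $\|(\varepsilon^{2} D_{\varepsilon})^{-1}\| = \mathcal{O}(\varepsilon^{-2})$. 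In the additive case, the same $\mathcal{O}(\varepsilon^{-2})$ bound follows from $\|E_P\|, \|E_Q\| = \mathcal{O}(1)$ on their respective subspaces combined with the explicit $1/\varepsilon$ prefactor and the trailing $M_t$ factor in the effective preconditioner.

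Combining the two steps gives
\[
E_{\varepsilon} P_0 (I - T_{\varepsilon}) P_0 - E_{\varepsilon} P_0 (I - \widetilde{T}_{\varepsilon}) P_0 = E_{\varepsilon} P_0 (\widetilde{T}_{\varepsilon} - T_{\varepsilon}) P_0 = \mathcal{O}(\varepsilon^{-2}) \cdot \mathcal{O}(\varepsilon^{3}) = \mathcal{O}(\varepsilon),
\]
which is the desired second identity. The hardest step, in my view, is the $\mathcal{O}(\varepsilon^{-2})$ bound on $E_{\varepsilon}$: although it is essentially implicit in the proofs of Theorems~\ref{th:MIP} and~\ref{th:new_DSA}, it must be extracted and verified uniformly with respect to the mesh-dependent splitting $H = H_{\leq} + H_{>}$, and in the same $W$-based norm in which the other $\mathcal{O}(\cdot)$ statements hold. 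A secondary concern is the convergence of the Neumann series in Step~1, which reduces to $\varepsilon\|M^{-1} H_{>}\| < 1$; this follows from the standing transport assumption together with $\|M^{-1}\| = \mathcal{O}(1)$, but should be invoked explicitly to justify writing $A^{-1}$ as an infinite series in the first place.
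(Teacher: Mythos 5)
Your proposal is correct and follows essentially the same route as the paper: the paper likewise writes $H=H_{\leq}+H_{>}$, identifies the three lagged sweeps with the truncated Neumann series $\sum_{\ell=0}^{2}\bigl(-\varepsilon(I+\varepsilon H_{\leq})^{-1}H_{>}\bigr)^{\ell}(I+\varepsilon H_{\leq})^{-1}$, obtains the residual $\bigl(-\varepsilon(I+\varepsilon H_{\leq})^{-1}H_{>}\bigr)^{3}\widetilde{M}^{-1}=\mathcal{O}(\varepsilon^{3})$ (Lemma~\ref{prop:mesh_cycles}), and then combines this with the $\mathcal{O}(\varepsilon^{-2})$ size of $E_{\varepsilon}$ to get the $\mathcal{O}(\varepsilon)$ statement. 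Your auxiliary bound $\|E_{\varepsilon}\|=\mathcal{O}(\varepsilon^{-2})$ is indeed the needed ingredient and follows directly from the expansion in Lemma~\ref{prop:singular matrix perturbation} (note also that $E_{\varepsilon}$ is built from $F_{0}$ and $D_{0}$ and so does not depend on the splitting $H=H_{\leq}+H_{>}$, which removes your uniformity concern).
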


\begin{rem}
Note that moving the term $\left(I-\varepsilon^{2}M_{t}^{-1}M_{a}\right)P_{0}\boldsymbol{\psi}$ in the linear system to
the right-hand side and fixing it---that is, not updating $\left(I-\varepsilon^{2}M_{t}^{-1}M_{a}\right)P_{0}\boldsymbol{\psi}$
based on an updated $\boldsymbol{\psi}$---is not typical in a fixed-point iterative method. One can also work out the
error-propagation matrix for multiple iterations that include updating this term each iteration. For this variant, the
asymptotics in $\varepsilon$ do not clearly indicate a well-conditioned system for $\varepsilon \ll 0$, as obtained in
Theorem \ref{th:mesh_cycles}. However, numerically, updating $\left(I-\varepsilon^{2}M_{t}^{-1}M_{a}\right)P_{0}\boldsymbol{\psi}$
each iteration proves to be more robust for larger $\varepsilon$, which is discussed in Section \ref{sec:numerical}.
\end{rem}

\begin{rem}
When $\varepsilon \gtrapprox h_x \sigma_t \ll 1$, the preconditioned matrix $\left(\varepsilon^{2}D_{\varepsilon}\right)^{-1}M_{t}\left(I-S_{\varepsilon}\right)$ from 
Theorem~\ref{th:MIP} becomes ill-conditioned (since the spectrum of $D_{\varepsilon}^{-1}$ goes to zero for high-frequency eigenvectors of $D_{\varepsilon}$). In practice, 
we therefore use the preconditioner 
\begin{equation}\label{eq:modprec}
I  + \left(\varepsilon^{2} D_{\varepsilon}\right)^{-1}M_t.
\end{equation}
In the thick limit, Theorem~\ref{th:MIP} yields $\left(\varepsilon^{2} D_{\varepsilon}\right)^{-1}M_t
\left(I-S_{\varepsilon}\right)=I+\mathcal{O}\left(\varepsilon\right)$. In addition,
$\|I-S_{\varepsilon}\|\sim\mathcal{O}(1)$ and is well-conditioned in the thin limit, and thus the resulting
preconditioned matrix using \eqref{eq:modprec} has the same asymptotic efficiency for optically thick problems,
and is well-conditioned for optically thin problems.
\end{rem}

\cred{
\begin{rem}
As noted previously, the symmetric interior penalty DSA matrix in Theorem~\ref{th:MIP} has a penalty parameter that scales like $\varepsilon^{-1}$, and this can present challenges for standard algebraic multigrid preconditioners. However, the recent preconditioner developed in \cite{Antonietti-Sarti-Verani-Zikatanov-2017}  utilizes a decomposition of the DG space into a continuous component and a correction. The resulting preconditioner involves solving a continuous Galerkin diffusion matrix, and a correction step involving a Jacobi iteration. In this way, the method has a close connection to the preconditioner in \cite{O'Malley_Kophazi_2017}. The preconditioner in \cite{Antonietti-Sarti-Verani-Zikatanov-2017} results in a number of iterations on the preconditioned interior penalty DSA matrix that is provably independent of the local DG polynomial order, the mesh spacing, and the penalty parameter. Therefore, the interior penalty DSA matrix from Theorem~\ref{th:MIP}, in conjunction with the preconditioner in  \cite{Antonietti-Sarti-Verani-Zikatanov-2017}, can serve as an effective alternative to the DSA matrix in Theorem~\ref{th:new_DSA}. 
\end{rem}
}

Proofs of Theorems \ref{th:MIP}-\ref{th:mesh_cycles} are given
in Section \ref{sec:Proofs}.

\subsection{Connection to previous work} \label{subsec:Connection to previous work}

\subsubsection{The modified interior penalty DSA preconditioner}

We first connect our derivation and analysis of the SIP DSA preconditioner to the MIP DSA preconditioner in \cite{Wang-Ragusa-2010}, and then relate the SIP DSA preconditioner to the consistent DSA preconditioner derived in \cite{Warsa-Wareing-Morel-2002} for linear DG discretizations. 

In \cite{Wang-Ragusa-2010} the authors numerically demonstrate that using the modified interior penalty (MIP) DSA matrix yields uniformly good convergence in both optically thick and thin regimes. The corresponding bilinear form is similar to equation (\ref{eq:BMIP}), but the penalty coefficient $\gamma$ in the penalty term, 
$$ \sum_{\Gamma\in\mathcal{F}}\int_{\Gamma} \gamma \left\llbracket u\right\rrbracket \left\llbracket v\right\rrbracket dS , $$
is modified outside of the optically thick limit. In particular, letting $h_{\mathbf{x}}$ denote the characteristic mesh spacing, the MIP penalty coefficient  $ \gamma $ in \cite{Wang-Ragusa-2010} scales like $ \max \left( 1/\left( 4 \varepsilon \right) ,  C_p/ \left( \sigma_t h_{\mathbf{x}} \right) \right)  $, where $C_p$ is a constant that depends on the finite element local polynomial order. Notice that, when 
$\varepsilon \lessapprox \sigma_t h_{\mathbf{x}} $, the MIP penalty coefficient reduces to $ 1/(4 \varepsilon) \approx \alpha/\varepsilon$ (this inequality becomes an equality in the limit of an infinite number of quadrature angles), which is identical to the SIP DSA penalty coefficient in  equation (\ref{eq:BMIP}). Therefore, Theorem~\ref{th:MIP} justifies the numerically observed behavior in  \cite{Wang-Ragusa-2010} when $\varepsilon \lessapprox \sigma_t h_{\mathbf{x}} $. 

When $\varepsilon \gtrapprox \sigma_t h_{\mathbf{x}} $, the analysis in Theorem~\ref{th:MIP} breaks down. Nevertheless, at this point the mesh spacing $h_\mathbf{x}$ is small enough to numerically resolve the continuum transport equation (\ref{eq:linear SN transport}). It is then expected that the analysis of DSA acceleration for the continuum S$_N$ transport equation using the continuum diffusion equation can describe this situation
(for example, see \cite{Larsen_1984}). In particular, as long as the discrete diffusion equation remains a valid discretization of the continuum diffusion equation when $\varepsilon \gtrapprox \sigma_t h_{\mathbf{x}} $, we expect rapid acceleration for both optically thick and thin regimes. However, it is well-known
that the penalty parameter must be at least as large as $\mathcal{O}\left( \frac{1}{\sigma_t h_{\mathbf{x}}} \right)$ in order for the SIP discretization to remain a stable discretization of the continuum diffusion equation (for example, see \cite{Arnold_Brezzi_Cockburn_Marini_2002}). This motivates choosing $\kappa$ to scale like $ \max \left\{ 1/\left( 4 \varepsilon \right) ,  C_p/ \left( \sigma_t h_{\mathbf{x}} \right) \right\}  $ to ensure that the MIP DSA matrix both approximates the near-nullspace in the optically thick (ill-conditioned) limit $\varepsilon \lessapprox \sigma_t h_{\mathbf{x}} $, and also remains a good approximation to the continuum diffusion equation as $\varepsilon \gtrapprox \sigma_t h_{\mathbf{x}} $ and $h_{\mathbf{x}}$ begins to resolve the mean free path. 

\subsubsection{The nonsymmetric interior penalty DSA preconditioner}
\label{sec:consistent discretization}

In Section \ref{sec:numerical}, the SIP DSA preconditioner is shown to be robust for $\varepsilon \ll 1$, but does not converge for
moderate $\varepsilon$ (relative to the characteristic mesh spacing). Consider the nonsymmetric interior penalty (IP) version of the DSA matrix
\begin{equation}
\frac{1}{\varepsilon}F_{0}+\frac{1}{3}\boldsymbol{G}^{T}M_{t}^{-1}\boldsymbol{G}-\tilde{\mathbf{F}}_{1}\cdot M_{t}^{-1}\mathbf{G}+M_{a},
  \label{eq:modified DSA matrix}
\end{equation}
where we have neglected the term $\mathbf{G}^T\cdot M_{t}^{-1}\mathbf{F}_{1}$
from the symmetric interior penalty DSA preconditioner defining $D_{\varepsilon}$
(see \eqref{eq:D}). Dropping this term results in a nonsymmetric interior penalty (IP) 
discretization of the diffusion equation when the opacities are constant, and we observe empirically
that uniformly good convergence is obtained for all tested values of $\varepsilon$ using 
this DSA matrix (\ref{eq:modified DSA matrix}). In fact, for linear DG discretizations and straight-edged meshes, the nonsymmetric interior penalty DSA matrix (\ref{eq:modified DSA matrix}) reduces to the Warsa-Wareing-Morel consistent diffusion discretization  \cite{Warsa-Wareing-Morel-2002}. 

Also, a straightforward (but tedious) calculation shows that one can obtain the SIP DSA preconditioner by taking the first two (discrete) angular moments
of the discrete equation (\ref{eq:Tmat1}), and employing the following discrete version
of Fick's law 
\begin{equation}
\boldsymbol{\psi}^{(d)}=\frac{1}{4\pi}\boldsymbol{\varphi} - \varepsilon\frac{1}{4\pi}M_{t}^{-1}\left(\boldsymbol{\Omega}_{d}\cdot\mathbf{G}+F^{(d)}\right)\boldsymbol{\varphi}+\mathcal{O}\left(\varepsilon^{2}\right) . \label{eq:asymptotic Fick's law}
\end{equation}
Equation
(\ref{eq:asymptotic Fick's law}) results from equation (\ref{eq:Tmat1}),
\begin{align*}
\boldsymbol{\psi}^{(d)} & =\left(I+\varepsilon M_{t}^{-1}\left(\boldsymbol{\Omega}_{d}\cdot\mathbf{G}+F^{(d)}\right)\right)^{-1}\left(\frac{1}{4\pi}\boldsymbol{\varphi}+\varepsilon\frac{1}{4\pi}\boldsymbol{q}_{\text{inc}}^{(d)}\right)\\
 & =\frac{1}{4\pi}\boldsymbol{\varphi}-\varepsilon M_{t}^{-1}\left(\boldsymbol{\Omega}_{d}\cdot\mathbf{G}+F^{(d)}\right)\frac{1}{4\pi}\boldsymbol{\varphi}+\varepsilon\frac{1}{4\pi}\boldsymbol{q}_{\text{inc}}^{(d)}+\mathcal{O}\left(\varepsilon^{2}\right),
\end{align*}
where the constant vector $\varepsilon\left(4\pi\right)^{-1}\boldsymbol{q}_{\text{inc}}^{(d)}$
is neglected for simplicity since it only contributes to the right-hand
side.  Similarly, by instead employing
the following modified version of Fick's law in the discrete moment
equations, 
\begin{equation}
\boldsymbol{\psi}^{(d)}\approx\frac{1}{4\pi}\boldsymbol{\varphi}+\varepsilon\frac{1}{4\pi}M_{t}^{-1}\left(\boldsymbol{\Omega}_{d}\cdot\mathbf{J}\right),\label{eq:consistent Fick's law}
\end{equation}
an analogous calculation shows that that $\mathbf{J}=-\mathbf{G}\boldsymbol{\varphi}$, and leads to the nonsymmetric interior penalty DSA matrix (\ref{eq:modified DSA matrix}). 
In particular, the modified version of Fick's law (\ref{eq:consistent Fick's law})
results from neglecting the term $\varepsilon\left(4\pi\right)^{-1}M_{t}^{-1}F^{(d)}\boldsymbol{\varphi}$
in equation (\ref{eq:asymptotic Fick's law}). 

\cred{
\begin{rem}
The consistent P1 formulation is usually written in terms of both the current $\mathbf{J}$ and the scalar flux $\boldsymbol{\varphi}$. However, upon using the $1^{\text{st}}$ moment equation to express the discrete current in terms of the scalar flux and plugging the resulting expression in to the $0^{\text{th}}$  moment equation, one obtains an equation for the scalar flux only and this equation exactly corresponds to the non-symmetric interior penalty DSA preconditioner.
\end{rem}
}

\section{Proofs of main results\label{sec:Proofs}}

\subsection{Proofs of the theorems}

\label{sec:proofs}

We first establish the following lemma.

\begin{lemma}
\label{prop:singular matrix perturbation}
Consider the matrix 
\[
\widehat{D}=F_{0}+\varepsilon D,
\]
where $F_{0}$ is a symmetric, singular matrix. Define $P$ as a projection on to the nullspace of $F_{0}$, let $Q=I-P$
denote its complement, and define
\[
E_{P}=P\left(P^TDP\right)^{-1}P^T,\,\,\,\,\,E_{Q}=Q\left(Q^TF_{0}Q\right)^{-1}Q^T.
\]
Then,
\begin{align}
\widehat{D}^{-1} & =\frac{1}{\varepsilon}E_{P}+\left(I-E_{P}D\right)E_{Q}\left(I-DE_{P}\right)+\varepsilon \left(I-E_{P}D\right)R_{\varepsilon}\left(I-DE_{P}\right),\label{eq:asymptotic expansion for inverse of A}
\end{align}
where 
\[
R_{\varepsilon}=  \varepsilon\Big(I+\varepsilon E_{Q}\left(D-DE_{P}D\right)Q\Big)^{-1}E_{Q}\left(I-DE_{P}\right)DE_Q.
\]
In addition, suppose that $D=D_{0}+D_{1}$, where $P^TD_{1}=D_{1}P=\mathbf{0}$.
Then,
\begin{equation}
\left(F_{0}+\varepsilon D\right)^{-1}=\left(F_{0}+\varepsilon D_{0}\right)^{-1}+\mathcal{O}\left(\varepsilon\right).\label{eq:F0 +eps*( D0+D1 ) inverse}
\end{equation}
\end{lemma}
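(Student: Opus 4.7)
The plan is to expand $\widehat D^{-1}$ via the block decomposition induced by the projection $P$ onto $\ker F_0$; this is the standard technique for a singular perturbation of this form. In the basis adapted to $\mathrm{range}(P)\oplus\mathrm{range}(Q)$,
\[
F_0=\begin{pmatrix}0 & 0\\ 0 & F_{QQ}\end{pmatrix},\qquad
D=\begin{pmatrix}D_{PP} & D_{PQ}\\ D_{QP} & D_{QQ}\end{pmatrix},
\]
with $F_{QQ}$ invertible since $P$ projects onto the full nullspace of the symmetric matrix $F_0$. In these coordinates $E_P=\mathrm{blkdiag}(D_{PP}^{-1},0)$ and $E_Q=\mathrm{blkdiag}(0,F_{QQ}^{-1})$, and one reads off the identities $F_0E_P=0$, $F_0E_Q=Q$, $(I-DE_P)P=0$, $(I-E_PD)P=0$, and $D-DE_PD=Q\tilde D_{QQ}Q$, where $\tilde D_{QQ}=D_{QQ}-D_{QP}D_{PP}^{-1}D_{PQ}$ is the Schur complement of $D$ in its $(P,P)$ block.

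To prove equation \eqref{eq:asymptotic expansion for inverse of A}, I would apply block inversion to $\widehat D$ by pivoting on the invertible $(Q,Q)$ entry $F_{QQ}+\varepsilon D_{QQ}$. The resulting Schur complement $S=\varepsilon\bigl(D_{PP}-\varepsilon D_{PQ}(F_{QQ}+\varepsilon D_{QQ})^{-1}D_{QP}\bigr)$ yields the $\frac{1}{\varepsilon}E_P$ leading term, the off-diagonal and $(Q,Q)$ blocks combine into the $O(1)$ piece $(I-E_PD)E_Q(I-DE_P)$, and after writing $(F_{QQ}+\varepsilon D_{QQ})^{-1}=F_{QQ}^{-1}(I+\varepsilon D_{QQ}F_{QQ}^{-1})^{-1}$ the remaining $O(\varepsilon)$ correction rearranges, using $F_{QQ}^{-1}\tilde D_{QQ}=E_Q(D-DE_PD)Q$, into the closed form $\varepsilon(I-E_PD)R_\varepsilon(I-DE_P)$. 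I would actually prefer to write this up as a direct verification that $\widehat D\cdot(\text{RHS})=I$: the identities $F_0E_P=0$ and $F_0E_Q=Q$ collapse the first two terms into $DE_P+(I-DE_P)=I$, and the residual $\varepsilon$-error is exactly absorbed by the $\varepsilon R_\varepsilon$ correction via a Woodbury-type manipulation.

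For \eqref{eq:F0 +eps*( D0+D1 ) inverse}, the hypotheses $P^TD_1=0=D_1P$ give $P^TDP=P^TD_0P$ together with $E_PD_1=D_1E_P=0$. Hence $E_P$ is unchanged under $D\mapsto D_0$, and $(I-E_PD)E_Q(I-DE_P)=(I-E_PD_0)E_Q(I-D_0E_P)$. Applying the expansion \eqref{eq:asymptotic expansion for inverse of A} both to $F_0+\varepsilon D$ and to $F_0+\varepsilon D_0$, the $\frac{1}{\varepsilon}$ and $O(1)$ contributions agree identically while the two $O(\varepsilon)$ remainders are each bounded by $C\varepsilon$, so the difference is $O(\varepsilon)$.

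The main obstacle is the bookkeeping needed to identify the basis-free form of $R_\varepsilon$---with its nested inverse $(I+\varepsilon E_Q(D-DE_PD)Q)^{-1}$ and the asymmetric flanking factors $(I-E_PD)$ on the left and $(I-DE_P)$ on the right---against the Neumann-type remainder produced by the block inversion. The direct-verification route sidesteps most of this overhead once the two identities $F_0E_P=0$ and $F_0E_Q=Q$ are in hand.
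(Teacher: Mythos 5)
Your proposal follows essentially the same strategy as the paper's proof: reduce $\widehat{D}\mathbf{x}=\mathbf{y}$ to the $2\times 2$ block system induced by $P$ and $Q$ and invert by block elimination, then observe for the second claim that $P^TD_1=D_1P=\mathbf{0}$ leaves $E_P$ and the $\mathcal{O}(1)$ term unchanged so $D_1$ only enters the $\mathcal{O}(\varepsilon)$ remainder. The one substantive difference is the elimination order: the paper eliminates the $P$-component first, so the $Q$-equation becomes $\bigl(Q^TF_0Q+\varepsilon Q^T(D-DE_PD)Q\bigr)\mathbf{x}_Q=Q^T(I-DE_P)\mathbf{y}$, and the stated remainder $R_\varepsilon$ then falls out of a single application of $(I+A)^{-1}=I-(I+A)^{-1}A$, with no Neumann resummation; your pivot on the $(Q,Q)$ block $F_{QQ}+\varepsilon D_{QQ}$ (or the direct verification $\widehat{D}\cdot\mathrm{RHS}=I$) is equally legitimate but defers exactly the bookkeeping you flag as the main obstacle, namely matching the asymmetric closed form of $R_\varepsilon$. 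One caution: several of your identities, e.g.\ $F_0E_Q=Q$ and $(I-DE_P)P=0$, tacitly assume $P=P^T$ (your blkdiag coordinates identify $P$ with its transpose). The lemma allows an arbitrary projection onto the nullspace of the symmetric $F_0$, for which one only has $F_0P=P^TF_0=\mathbf{0}$, hence $F_0=Q^TF_0Q$ and $F_0E_Q$ behaves like $Q^T$ rather than $Q$, and the correct one-sided identities are $(I-E_PD)P=\mathbf{0}$ and $P^T(I-DE_P)=\mathbf{0}$. The paper's ordering, which keeps $P^T,Q^T$ acting on the left and $P,Q$ on the right, sidesteps this; if you pursue the direct-verification route you should either restrict to an orthogonal $P$ or track the transposes carefully.
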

\begin{proof}
Consider the equation $\widehat{D}\mathbf{x}=\mathbf{y}$, and let $P$ be a projection onto the null space of $F_0$, and
$Q = I - P$ its complement. Similar to the proof of Proposition \ref{prop:condition number},
$\widehat{D}\mathbf{x}=\mathbf{y}$ can be expanded based on $P$ and $Q$ as a $2\times2$ system. First, note that
$\widehat{D}\mathbf{x}=\mathbf{y}$ can be written as
\begin{align*}
\widehat{D}\begin{pmatrix}P & Q\end{pmatrix}\begin{pmatrix}P\\ Q \end{pmatrix}\mathbf{x} & = (P + Q)\mathbf{y}.
\end{align*}
Now, we can multiply on the left by the full column-rank operator $\begin{pmatrix}P^T \\ Q^T\end{pmatrix}$ to yield
\begin{align*}
\begin{pmatrix}P^T \\ Q^T \end{pmatrix}\widehat{D}\begin{pmatrix}P & Q\end{pmatrix}\begin{pmatrix}P\\ Q \end{pmatrix}\mathbf{x} & =
  \begin{pmatrix}P^T \\ Q^T\end{pmatrix}(P + Q)\mathbf{y}, \\
\begin{pmatrix} P^T\widehat{D}P & P^T\widehat{D}Q \\ Q^T\widehat{D}P & Q^T\widehat{D}Q \end{pmatrix} 
  \begin{pmatrix} P\mathbf{x}\\ Q\mathbf{x} \end{pmatrix} & = \begin{pmatrix}P^T\mathbf{y} \\ Q^T \mathbf{y} \end{pmatrix}.
\end{align*}
Denote $\mathbf{x}_P = P\mathbf{x}$ and $\mathbf{x}_Q:=Q\mathbf{x}$. Using the equations $P^TF_0 = F_0P = \mathbf{0}$,
we can rewrite the linear system as
\begin{equation}
\begin{pmatrix} \varepsilon P^TDP & \varepsilon P^TDQ \\ \varepsilon Q^TDP & Q^T\widehat{D}Q \end{pmatrix} 
  \begin{pmatrix} \mathbf{x}_P \\ \mathbf{x}_Q \end{pmatrix} = \begin{pmatrix}P^T\mathbf{y} \\ Q^T \mathbf{y} \end{pmatrix}.
  \label{eq:first equation, sing perturb}
\end{equation}
Then,
\begin{align*}
\mathbf{x}_{P} & = \frac{1}{\varepsilon}\left(P^TDP\right)^{-1}P^T\mathbf{y}-\left(P^TDP\right)^{-1}\left(P^TDQ\right)\mathbf{x}_{Q}\\
 & =\frac{1}{\varepsilon}E_{P}\mathbf{y}-E_{P}DQ\mathbf{x}_{Q},
\end{align*}
where the second equality follows from noting that $\mathbf{x}_{P}=P\mathbf{x}$ and multiplying both sides by $P$.
Equation \eqref{eq:first equation, sing perturb} also yields
\begin{align*}
Q^T\widehat{D}Q\mathbf{x}_Q + \varepsilon Q^TDP \mathbf{x}_P & = Q^T \mathbf{y} , \\
\Big(Q^T\widehat{D}Q-\varepsilon\left(Q^TDP\Big)\left(E_{P}DQ\right)\right)\mathbf{x}_{Q} & =Q^T\mathbf{y}-\left(Q^TDE_{P}\right)\mathbf{y}, \\
\Big(Q^TF_{0}Q+\varepsilon Q^T\left(D-DE_{P}D\right)Q\Big)\mathbf{x}_{Q} & =Q^T\mathbf{y}-\left(Q^TDE_{P}\right)\mathbf{y}.
\end{align*}
Now, since the matrix $Q^TF_{0}Q$ above is invertible on the range of $Q^{\text{T}}$,
we can apply $\left(Q^TF_{0}Q\right)^{-1}Q^T$ to both sides to get
\begin{equation}
\left(I+\varepsilon Q\left(Q^TF_{0}Q\right)^{-1}Q^T\left(D-DE_{P}D\right)Q\right)\mathbf{x}_{Q}=
  \left(Q^TF_{0}Q\right)^{-1}Q^T\left(I-DE_{P}\right)\mathbf{y}.\label{eq:xQ num 2}
\end{equation}
Substituting $E_Q = Q\left(Q^TF_{0}Q\right)^{-1}Q^T$ in the left-hand side and applying the matrix identity
$\left(I+A\right)^{-1}=I-\left(I+A\right)^{-1}A$ to $\Big(I+\varepsilon E_{Q}\left(D-DE_{P}D\right)Q\Big)^{-1}$
yields
\begin{equation}
\Big(I+\varepsilon E_{Q}\left(D-DE_{P}D\right)Q\Big)^{-1}\left(Q^TF_{0}Q\right)^{-1}Q^T
  =\left(Q^TF_{0}Q\right)^{-1}Q^T- \tilde{R}_{\varepsilon},\label{eq:xQ with Reps}
\end{equation}
where 
\begin{align*}
\tilde{R}_{\varepsilon} & = \varepsilon\Big(I+\varepsilon E_{Q}\left(D-DE_{P}D\right)Q\Big)^{-1}E_{Q}\left(D-DE_{P}D\right)Q\left(\left(Q^TF_{0}Q\right)^{-1}Q^T\right) \\
&  = \varepsilon\Big(I+\varepsilon E_{Q}\left(D-DE_{P}D\right)Q\Big)^{-1}E_{Q}\left(I-DE_{P}\right)DE_Q.
\end{align*}
Therefore, using equation (\ref{eq:xQ with Reps}) in equation (\ref{eq:xQ num 2}), and noting that $Q\mathbf{x}_{Q}=\mathbf{x}_{Q}$,
\begin{align*}
\mathbf{x}_{Q} & =\left(Q^TF_{0}Q\right)^{-1}Q^T\left(I-DE_{P}\right)\mathbf{y} - \tilde{R}_{\varepsilon}\left(I-DE_{P}\right)\mathbf{y}, \\
 & =E_{Q}\left(I-DE_{P}\right)\mathbf{y} + \tilde{R}_{\varepsilon}\left(I-DE_{P}\right)\mathbf{y}.
\end{align*}
Solving for $\widehat{D}^{-1}\mathbf{y}$ yields the final result
\begin{align*}
\widehat{D}^{-1}\mathbf{y} & =\mathbf{x}_{P}+\mathbf{x}_{Q}\\
 & =\frac{1}{\varepsilon}E_{P}\mathbf{y} + (I -E_{P}DQ)\mathbf{x}_{Q} \\
 & =\frac{1}{\varepsilon}E_{P}\mathbf{y}+(I-E_{P}DQ)E_{Q}\left(I-DE_{P}\right)\mathbf{y} + 
  (I-E_{P}D)\tilde{R}_{\varepsilon}\left(I-DE_{P}\right).
\end{align*}
Equation \eqref{eq:F0 +eps*( D0+D1 ) inverse} follows by noting that 
\[
(I-E_{P}DQ)E_{Q}\left(I-DE_{P}\right)\mathbf{y} = (I-E_{P}D_0Q)E_{Q}\left(I-D_0E_{P}\right)\mathbf{y} ,
\]
and that terms involving $D_1$ only come up in the $\mathcal{O}(\varepsilon)$ remainder term, $\tilde{R}_{\varepsilon}$.
\end{proof}

We can now use Lemma~\ref{prop:singular matrix perturbation} to
prove Theorems~\ref{th:MIP}~and~\ref{th:new_DSA}
\begin{proof}[Proof of Theorem \ref{th:MIP}]
 Recall the definition from Lemma \ref{prop:singular matrix perturbation},
$H^{(d)}=M_{t}^{-1}\left(\boldsymbol{\Omega}_{d}\cdot\mathbf{G}+F^{(d)}\right),$
and the identities $\sum_{d}w_{d}=4\pi$, $\sum_{d}w_{d}\boldsymbol{\Omega}_{d}=\boldsymbol{0}$
and $F_{0}=\frac{1}{4\pi}\sum_{d}w_{d}F^{(d)}$. Using the identiy
for $\left(I+\varepsilon H^{(d)}\right)^{-1}$ in (\ref{eq:IplusH_inv}),
$I-S_{\varepsilon}$ can be expanded as
\begin{equation}
\begin{split}
I-S_{\varepsilon} & =I-\frac{1}{4\pi}\sum_{d}w_{d}\left(I+\varepsilon H^{(d)}\right)^{-1}\left(I-\varepsilon^{2}M_{t}^{-1}M_{a}\right) \\
 & =I-\frac{1}{4\pi}\sum_{d}w_{d}\left(I-\varepsilon H^{(d)}+\varepsilon^{2}\left(\left(H^{(d)}\right)^{2}-M_{t}^{-1}M_{a}\right)\right)+\mathcal{O}\left(\varepsilon^{3}\right) \\
 & =\varepsilon\left[\frac{1}{4\pi}\sum_{d}w_{d}H^{(d)}-\frac{1}{4\pi}\varepsilon\sum_{d}w_{d}\left(\left(H^{(d)}\right)^{2}-M_{t}^{-1}M_{a}\right)\right]+\mathcal{O}\left(\varepsilon^{3}\right) \\
 & =\varepsilon M_{t}^{-1}\left(F_{0}+\varepsilon\tilde{D}_{\varepsilon}\right)+\mathcal{O}\left(\varepsilon^{3}\right),\label{eq:I - S_eps}
\end{split}
\end{equation}
where $\tilde{D}_{\varepsilon}$ corresponds to the latter term in
(\ref{eq:I - S_eps}) and is given by 
\begin{align}
\tilde{D}_{\varepsilon} & =-\frac{1}{4\pi}M_{t}\sum_{d}w_{d}\left(\left(M_{t}^{-1}\left(\boldsymbol{\Omega}_{d}\cdot\mathbf{G}+F^{(d)}\right)\right)^{2}-M_{t}^{-1}M_{a}\right).\label{eq:matrix for D}
\end{align}
Recall the identity from (\ref{eq:relation between F and tildeF}),
$\boldsymbol{\Omega}_{d}\cdot\mathbf{G}+F^{(d)}=-\boldsymbol{\Omega}_{d}\cdot\mathbf{G}^T+\tilde{F}^{(d)}$,
the definitions of $\mathbf{F}_{1}=\frac{1}{4\pi}\sum_{d}w_{d}\boldsymbol{\Omega}_{d}F^{(d)}$
and $\tilde{\mathbf{F}}_{1}=\frac{1}{4\pi}\sum_{d}w_{d}\boldsymbol{\Omega}_{d}\tilde{F}^{(d)}$
from equation (\ref{eq:F_def}), and also that because $\boldsymbol{\Omega}_{d}$
is a scalar vector, it commutes in a certain sense; for example, 
\begin{align*}
\boldsymbol{\Omega}_{d}\cdot\boldsymbol{G}^{T}M_{t}^{-1}F^{(d)} & =\left(\Omega_{d_{1}},\Omega_{d_{2}},\Omega_{d_{3}}\right)\cdot\left(\mathbf{G}_{1}^{T},\mathbf{G}_{2}^{T},\mathbf{G}_{3}^{T}\right)M_{t}^{-1}F^{(d)}\\
 & =\left[\left(\mathbf{G}_{1}^{T},\mathbf{G}_{2}^{T},\mathbf{G}_{3}^{T}\right)M_{t}^{-1}F^{(d)}\right]\cdot\left(\Omega_{d_{1}},\Omega_{d_{2}},\Omega_{d_{3}}\right)\\
 & =\boldsymbol{G}^{T}M_{t}^{-1}\cdot\left(F^{(d)}\boldsymbol{\Omega}_{d}\right).
\end{align*}
Also recall the outer product summation $\sum_{d}w_{d}\boldsymbol{\Omega}_{d}\boldsymbol{\Omega}_{d}^{T}=\frac{4\pi}{3}I$.
Expanding the quadratic term in $\tilde{D}_{\varepsilon}$ and plugging
in these identities yields
\begin{align*}
\tilde{D}_{\varepsilon} & =-\frac{1}{4\pi}\sum_{d}w_{d}\bigg(-\boldsymbol{\Omega}_{d}\cdot\boldsymbol{G}^{T}M_{t}^{-1}\boldsymbol{\Omega}_{d}\cdot\boldsymbol{G}+\tilde{F}^{(d)}M_{t}^{-1}\boldsymbol{\Omega}_{d}\cdot\boldsymbol{G}-\\
 & \qquad\qquad\boldsymbol{\Omega}_{d}\cdot\boldsymbol{G}^{T}M_{t}^{-1}F^{(d)}+\tilde{F}^{(d)}M_{t}^{-1}F^{(d)}-M_{a}\bigg)\\
 & =\frac{1}{3}\boldsymbol{G}^{T}M_{t}^{-1}\boldsymbol{G}-\tilde{\mathbf{F}}_{1}\cdot M_{t}^{-1}\mathbf{G}+\mathbf{G}^T\cdot M_{t}^{-1}\mathbf{F}_{1}+M_{a}-\\
 & \qquad\qquad\frac{1}{4\pi}\sum_{d}w_{d}\tilde{F}^{(d)}M_{t}^{-1}F^{(d)}.
\end{align*}
Decompose $\tilde{D}_{\varepsilon}=D_{0}+D_{1}$,
where 
\begin{align}
D_{0} & =\left(\frac{1}{3}\mathbf{G}^{T}\cdot M_{t}^{-1}\mathbf{G}-\tilde{\mathbf{F}}_{1}\cdot M_{t}^{-1}\mathbf{G}+\mathbf{G}^T\cdot M_{t}^{-1}\mathbf{F}_{1}+M_{a}\right),\label{eq:D0}\\
D_{1} & =-\sum_{d}w_{d}\tilde{F}^{(d)}M_{t}^{-1}F^{(d)}.\nonumber 
\end{align}
Then,
\begin{align}
I-S_{\varepsilon} & =\varepsilon M_{t}^{-1}\left(F_{0}+\varepsilon\left(D_{0}+D_{1}\right)\right)+\mathcal{O}\left(\varepsilon^{3}\right).\label{eq:S_precond}
\end{align}
In the right-hand side of equation (\ref{eq:S_precond}), the lower-order
terms in $\varepsilon$ exactly take the form of the operator in Lemma
\ref{prop:singular matrix perturbation}, where $PD_{1}=D_{1}P=\mathbf{0}$.
To that end, from equation (\ref{eq:F0 +eps*( D0+D1 ) inverse}) in
Lemma \ref{prop:singular matrix perturbation},
\begin{equation}
\left(F_{0}+\varepsilon\left(D_{0}+D_{1}\right)\right)^{-1}=\frac{1}{\varepsilon}\left(\frac{1}{\varepsilon}F_{0}+D_{0}\right)^{-1}+\mathcal{O}(\varepsilon).\label{eq:F0D0_id}
\end{equation}
Defining $D_{\varepsilon}=\frac{1}{\varepsilon}F_{0}+D_{0}$, observe
from equations (\ref{eq:S_precond}) and (\ref{eq:F0D0_id}) that
\begin{align*}
(\varepsilon^{2}D_{\varepsilon})^{-1}M_{t}(I-S_{\varepsilon}) & =\frac{1}{\varepsilon}\left(\frac{1}{\varepsilon}F_{0}+D_{0}\right)^{-1}\left(F_{0}+\varepsilon\left(D_{0}+D_{1}\right)\right)+\mathcal{O}\left(\varepsilon\right)\\
 & =I+\mathcal{O}\left(\varepsilon\right),
\end{align*}
which completes the proof.
\end{proof}
\begin{proof}[Proof of Theorem \ref{th:new_DSA}]
 The proof of Theorem \ref{th:new_DSA} follows naturally from that
of Theorem \ref{th:MIP} and Lemma \ref{prop:singular matrix perturbation}.
From Lemma \ref{prop:singular matrix perturbation},
\[
(F_{0}+\varepsilon D_{0})^{-1}=\frac{1}{\varepsilon}E_{P}+\left(I-E_{P}D_0\right)E_{Q}\left(I-D_0E_{P}\right),
\]
where $E_{P}=P\left(P^TD_{0}P\right)^{-1}P^T$ and
$E_{Q}=Q\left(Q^TF_{0}Q\right)^{-1}Q^T$. Defining
$E_{\varepsilon}=(F_{0}+\varepsilon D_{0})^{-1}$
and appealing to (\ref{eq:S_precond}) and (\ref{eq:F0D0_id}), observe
that
\begin{align*}
\frac{1}{\varepsilon}E_{\varepsilon}M_{t}(I-S_{\varepsilon}) & =(F_{0}+\varepsilon D_{0})^{-1}\left[\left(F_{0}+\varepsilon\left(D_{0}+D_{1}\right)\right)+\mathcal{O}(\varepsilon^{2})\right]\\
 & =(F_{0}+\varepsilon D_{0})^{-1}\left(F_{0}+\varepsilon\left(D_{0}+D_{1}\right)\right)+\mathcal{O}\left(\varepsilon\right)\\
 & =I+\mathcal{O}(\varepsilon).
\end{align*}
\end{proof}

\subsection{Bilinear form for DG near-nullspace\label{sec:Bilinear-form-for}}

This section proves the identity (\ref{eq:Deps and BMIP}) relating
the DSA matrix (\ref{eq:Deps}) to the symmetric interior penalty
bilinear form (\ref{eq:BMIP}). In matrix form, (\ref{eq:Deps}) corresponds
to the bilinear form

\begin{equation}
\mathbf{v}^T\left(\frac{1}{\varepsilon}F_{0}+\frac{1}{3}\boldsymbol{G}^{T}M_{t}^{-1}\boldsymbol{G}-\tilde{\mathbf{F}}_{1}\cdot M_{t}^{-1}\mathbf{G}+\mathbf{G}^T\cdot M_{t}^{-1}\mathbf{F}_{1}+M_{a}\right)\mathbf{u}.\label{eq:bilinear form equivalent}
\end{equation}
Several of the relations are straightforward. The term $\mathbf{v}^{T}M_{a}\mathbf{u}=\sum_{\kappa\in\mathcal{E}}\int_{\kappa}\sigma_{a}uvd\mathbf{x}$
follows immediately from (\ref{eq:bilinear form for Ma}). Recalling
the definitions of $\alpha(\mathbf{x})$ (\ref{eq:alpha}) and $\mathbf{v}^{T}F^{(d)}\mathbf{u}$
(\ref{eq:bilinear form for G}), along with the identity $\sum_{d}w_{d}\boldsymbol{\Omega}_{d}=\mathbf{0}$,
\[
\mathbf{v}^{T}\frac{1}{\varepsilon}F_{0}\mathbf{u}=\frac{1}{\varepsilon}\sum_{d}w_{d}\mathbf{v}^{T}F^{(d)}\mathbf{u}=\sum_{\Gamma\in\mathcal{F}}\int_{\Gamma}\alpha\left\llbracket u\right\rrbracket \left\llbracket v\right\rrbracket dS.
\]
The remaining terms are slightly more technical, and Section~\ref{subsec:Face-matrix-terms}
proves that
\begin{equation}
\mathbf{v}^T\left(\mathbf{G}^T\cdot M_{t}^{-1}\mathbf{F}_{1}\right)\mathbf{u}=-\sum_{\Gamma\in\mathcal{F}}\int_{\Gamma}\frac{1}{3\sigma_{t}}\left\{ \mathbf{n}\cdot\nabla_{\mathbf{x}}v\right\} \left\llbracket u\right\rrbracket dS,\label{eq:diffusion coupling term 1}
\end{equation}
\begin{equation}
\mathbf{v}^T\left(\tilde{\mathbf{F}}_{1}\cdot M_{t}^{-1}\mathbf{G}\right)\mathbf{u}=\sum_{\Gamma\in\mathcal{F}}\int_{\Gamma}\frac{1}{3\sigma_{t}}\left\{ \mathbf{n}\cdot\nabla_{\mathbf{x}}u\right\} \left\llbracket v\right\rrbracket dS.\label{eq:diffusion coupling term 2}
\end{equation}
Together the above results combine to yield the identity in (\ref{eq:Deps and BMIP}).

\subsubsection{Face matrix terms in bilinear form\label{subsec:Face-matrix-terms}}

This section starts with a lemma expressing the action of $M_{t}^{-1}\mathbf{G}$
in the context of bilinear forms. 
\begin{lemma}
\label{lem:derivative}In the case of straight-edged meshes and 
constant opacities, $1/\sigma_{t}$, for each mesh element, $\kappa_{e}$,
$\left(M_{t}^{-1}\mathbf{G}\right)\mathbf{u}$ is related to $\frac{1}{\sigma_{t}}\nabla_{\mathbf{x}}u$
via
\begin{equation}
\left(M_{t}^{-1}\right)_{e,e}\mathbf{G}_{e,e}\left[\boldsymbol{u}_{e}\right]_{m}=\frac{1}{\sigma_{t}}\left[\left(\nabla_{\mathbf{x}}u_{e}\right)\left(\mathbf{x}_{e,m}\right)\right]_{m}.\label{eq:G and derivative}
\end{equation}
Moreover, for any bilinear form $\mathcal{B}\left(u,v\right)$ with
associated matrix $B$, $\mathcal{B}\left(u,v\right)=\mathbf{v}^TB\mathbf{u}$,
it holds that 
\[
\mathcal{B}\left(\frac{1}{\sigma_{t}}\partial_{x_{j}}u,v\right)=\mathbf{v}^T\left(B\left(M_{t}^{-1}G^{(j)}\right)\right)\mathbf{u}.
\]
\end{lemma}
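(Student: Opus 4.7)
The plan is to establish both identities by exploiting the block-diagonal structure of $G^{(j)}$ and $M_t$ over mesh elements and the fact that, on straight-edged meshes with elementwise-constant opacity, the physical-space derivative $\partial_{x_j} u_e$ still lies in the local polynomial space.

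First I would observe that the bilinear forms in \eqref{eq:bilinear form for G} and \eqref{eq:bilinear form for Mt} defining $G^{(j)}$ and $M_t$ are sums of integrals over individual elements with no cross-element coupling, so both matrices are block-diagonal with blocks indexed by $e$. On $\kappa_e$, constancy of $\sigma_t$ gives $(M_t)_{e,e} = \sigma_t M_e$, where $M_e$ is the standard local mass matrix, while $(G^{(j)})_{e,e}$ has entries $\int_{\kappa_e} (\partial_{x_j} v_n)\, v_m \, d\mathbf{x}$.

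Next I would use the straight-edged assumption: the reference-to-physical map is affine, so $\partial_{x_j} u_e$ is again a polynomial of degree at most $r$ on $\kappa_e$, hence it belongs to the local DG space $\mathcal{P}_r(\hat{\kappa})$. Setting $w_e := \sigma_t^{-1} \partial_{x_j} u_e$, its nodal expansion in the basis $\{v_m\}$ reads $w_e(\mathbf{x}) = \sum_m w_{e,m} v_m(\mathbf{x})$ with $w_{e,m} = \sigma_t^{-1} (\partial_{x_j} u_e)(\mathbf{x}_{e,m})$. Equating the two ways of computing $\int_{\kappa_e} w_e v_m \, d\mathbf{x}$ yields the local matrix identity $\sigma_t M_e \mathbf{w}_e = (G^{(j)})_{e,e} \mathbf{u}_e$, i.e., $(M_t^{-1})_{e,e} (G^{(j)})_{e,e} \mathbf{u}_e = \mathbf{w}_e$, which is precisely \eqref{eq:G and derivative}.

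For the second identity, I would assemble the per-element statement into a global one: $\mathbf{w} := M_t^{-1} G^{(j)} \mathbf{u}$ is the coefficient vector in the DG basis of the function $\sigma_t^{-1} \partial_{x_j} u$. Substituting into $\mathcal{B}(w,v) = \mathbf{v}^T B \mathbf{w}$ gives
\[
\mathcal{B}\!\left(\tfrac{1}{\sigma_t} \partial_{x_j} u,\, v\right) = \mathbf{v}^T B\, \bigl(M_t^{-1} G^{(j)}\bigr)\, \mathbf{u},
\]
as claimed. The main subtlety, and the reason for the straight-edged restriction, is that under a non-affine (curved) map the physical-space derivative of $u_e$ acquires Jacobian factors that are generically rational rather than polynomial, so $\partial_{x_j} u_e$ no longer lies in $\mathcal{P}_r$ and cannot be represented exactly by a coefficient vector in the nodal DG basis. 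Everything else is bookkeeping: identifying block-diagonal structures, invoking the affine chain rule on each $\kappa_e$, and promoting the element-local identity to the global one.
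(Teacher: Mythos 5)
Your proposal is correct and follows essentially the same route as the paper's proof: use the affine (straight-edged) map to conclude $\partial_{x_j}u_e$ remains in the local polynomial space, expand it in the nodal (interpolating) basis so its coefficients are its point values, equate the two evaluations of $\int_{\kappa_e} w_e v_m\,d\mathbf{x}$ to get the element-local identity $(M_t^{-1})_{e,e}(G^{(j)})_{e,e}\mathbf{u}_e=\mathbf{w}_e$, and then substitute the resulting coefficient vector $M_t^{-1}G^{(j)}\mathbf{u}$ into $\mathcal{B}(\cdot,v)=\mathbf{v}^TB(\cdot)$. The only cosmetic difference is that the paper writes the final substitution out as an explicit sum over element-pair blocks $B_{e',e}$, while you state it directly via block-diagonality; the mathematical content is identical.
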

\begin{proof}
Without loss of generality, expand $u\left(\mathbf{x}\right)$ in
a piecewise polynomial basis consisting of interpolating polynomials
$\left\{ u_{e,m}\left(\mathbf{x}\right)\right\} _{e,m}$,
\[
u\left(\mathbf{x}\right)=\sum_{m}u\left(\mathbf{x}_{e,m}\right)u_{e,m}\left(\mathbf{x}\right),\,\,\,\,\,\mathbf{x}\in\kappa_{e}.
\]
Since the mesh transformation from the reference element $\hat{\kappa}$
to the physical element $\kappa_{e}$ is linear, $\partial_{x_{j}}u_{e}\left(\mathbf{x}\right)$
is a polynomial of degree less than or equal to the degree of $u_{e}\left(\mathbf{x}\right)$,
and so
\[
\partial_{x_{j}}u_{e}\left(\mathbf{x}\right)=\sum_{n}u_{e}\left(\mathbf{x}_{e,n}\right)\partial_{x_{j}}u_{e,n}\left(\mathbf{x}\right)=\sum_{n}\left(\partial_{x_{j}}u_{e}\right)\left(\mathbf{x}_{e,n}\right)u_{e,n}\left(\mathbf{x}\right).
\]
Therefore, 
\[
\int_{\kappa_{e}}\left(\partial_{x_{j}}u_{e}\right)u_{e,m}d\mathbf{x}=\sum_{n}\partial_{x_{j}}u_{e}\left(\mathbf{x}_{e,n}\right)\int_{\kappa_{e}}u_{e,n}u_{e,m}d\mathbf{x}=\sum_{n}u_{e}\left(\mathbf{x}_{e,n}\right)\int_{\kappa_{e}}\left(\partial_{x_{j}}u_{e,n}\right)u_{e,m}d\mathbf{x}.
\]
Recalling that $G_{e}^{(j)}=\left[\int_{\kappa_{e}}u_{e,m}\left(\partial_{x_{j}}u_{e,n}\right)d\mathbf{x}\right]_{mn}$
and $M_{t,e}=\sigma_{t}\left[\int_{\kappa_{e}}u_{e,n}u_{e,m}d\mathbf{x}\right]_{mn}$,
we can write the above identity as 
\[
\sigma_{t}^{-1}\left(M_{t}\right)_{e,e}\left[\partial_{x_{j}}\varphi_{e}\left(\mathbf{x}_{m}\right)\right]_{m}=G_{e}^{(j)}\left[\varphi_{e}\left(\mathbf{x}_{m}\right)\right]_{m}.
\]
Applying $\left(M_{t}^{-1}\right)_{e,e}$ to both sides above yields
equation (\ref{eq:G and derivative}).

Now consider the bilinear form $\mathcal{B}\left(\sigma_{t}^{-1}\partial_{x_{j}}u,v\right)$
and suppose that the matrix $B$ is such that 
\[
\mathbf{v}^TB\mathbf{u}=\mathcal{B}\left(u,v\right),
\]
for any $u\left(\cdot\right)$ and $v\left(\cdot\right)$ in the DG
space. Then we use the following: if $\sigma_{t}$ is constant, then the bilinear form $\mathcal{B}\left(u,\partial_{x_{j}}v\right)$
corresponds to the matrix $B\left(M_{t}^{-1}G^{(j)}\right)$. Indeed,
letting $B_{e',e}$ denote the submatrix of $B$ corresponding to
elements $\kappa_{e'}$ and $\kappa_{e}$,
\begin{align*}
\mathcal{B}\left(\frac{1}{\sigma_{t}}\partial_{x_{j}}u,v\right) & =\sum_{e,e'}\sum_{m,n}\left[v_{e'}\left(\mathbf{x}_{e',m}\right)\right]_{m}\left(B_{e',e}\right)_{m,n}\frac{1}{\sigma_{t}}\left[\left(\partial_{x_{j}}u_{e}\right)\left(\mathbf{x}_{e,n}\right)\right]_{n}\\
 & =\sum_{e,e'}\mathbf{v}_{e'}^TB_{e',e}\left(\left(M_{t,e}^{-1}G_{e}^{(j)}\right)\mathbf{u}_{e}\right)\\
 & =\sum_{e,e'}\mathbf{v}_{e'}^T\left(B_{e',e}\left(M_{t,e}^{-1}G_{e}^{(j)}\right)\right)\mathbf{u}_{e}\\
 & =\mathbf{v}^T\left(B\left(M_{t}^{-1}G^{(j)}\right)\right)\mathbf{u}.
\end{align*}
Using equations (\ref{eq:F_def}) and (\ref{eq:tildeFd bilinear form}),
and the identities $\sum_{d}w_{d}\boldsymbol{\Omega}_{d}\boldsymbol{\Omega}_{d}^{T}=\frac{4\pi}{3}I$
and $\sum_{d}w_{d}\boldsymbol{\Omega}_{d}\left|\boldsymbol{\Omega}_{d}\cdot\mathbf{n}\right|=\mathbf{0}$,
\begin{equation}
\begin{split}
\label{eq:F1 inner product}
\mathbf{v}^T\left(\mathbf{F}_{1}\right)_{j}\mathbf{u} & =-\sum_{\Gamma\in\mathcal{F}}\int_{\Gamma}\left(\frac{1}{4\pi}\sum_{d}w_{d}\left(\boldsymbol{\Omega}_{d}\right)_{j}\boldsymbol{\Omega}_{d}^T\right)\mathbf{n}\left\llbracket u\right\rrbracket \left\{ v\right\} dS+  \\
 & \qquad\quad\frac{1}{2}\sum_{\Gamma\in\mathcal{F}^{\text{i}}}\int_{\Gamma}\left(\frac{1}{4\pi}\sum_{d}w_{d}\left(\boldsymbol{\Omega}_{d}\right)_{j}\left|\boldsymbol{\Omega}_{d}\cdot\mathbf{n}\right|\right)\left\llbracket u\right\rrbracket \left\llbracket v\right\rrbracket dS \\
 & =-\frac{1}{3}\sum_{\Gamma\in\mathcal{F}}\int_{\Gamma}\mathbf{n}_{j}\left\llbracket u\right\rrbracket \left\{ v\right\} dS ,
\end{split}
\end{equation}
where $\mathbf{n}_{j}$ denotes the $j$th component of the normal
vector $\mathbf{n}$. Similarly,
\begin{equation}
\begin{split}
\label{eq:tildeF1 inner product}
\mathbf{v}^T\left(\tilde{\mathbf{F}}_{1}\right)_{j}\mathbf{u} & =\sum_{\Gamma\in\mathcal{F}}\int_{\Gamma}\left(\frac{1}{4\pi}\sum_{d}w_{d}\left(\boldsymbol{\Omega}_{d}\right)_{j}\boldsymbol{\Omega}_{d}^T\right)\mathbf{n}\left\{ u\right\} \left\llbracket v\right\rrbracket dS +  \\
 & \qquad\quad\frac{1}{2}\sum_{\Gamma\in\mathcal{F}}\int_{\Gamma}\left(\frac{1}{4\pi}\sum_{d}w_{d}\left(\boldsymbol{\Omega}_{d}\right)_{j}\left|\boldsymbol{\Omega}_{d}\cdot\mathbf{n}\right|\right)\left\llbracket u\right\rrbracket \left\llbracket v\right\rrbracket dS  \\
 & =\frac{1}{3}\sum_{\Gamma\in\mathcal{F}}\int_{\Gamma}\mathbf{n}_{j}\left\{ u\right\} \left\llbracket v\right\rrbracket dS.
\end{split}
\end{equation}
Applying Lemma~\ref{lem:derivative} yields equations (\ref{eq:diffusion coupling term 1})
and (\ref{eq:diffusion coupling term 2}).
\end{proof}

\subsection{Fixed-point iteration on HO meshes\label{subsec:Mesh-cycles}}

Theorem ~\ref{th:mesh_cycles} follows from the following Lemma.

\begin{lemma}
\label{prop:mesh_cycles}
Consider a linear system of the form in \eqref{eq:bigsys}, with condensed notation
\begin{equation}
\label{eq:gensys}
(I + \varepsilon H - B)\boldsymbol{\psi}^{(d)} = \mathbf{s}^{(d)}.
\end{equation}
Denote $\mathcal{H} := I + \varepsilon H - B$, and consider a matrix splitting $H = H_{\leq} + H_{>}$. Define
$\widetilde{M}^{-1} = (I + \varepsilon H)^{-1}$ as the preconditioner
associated with inverting $I + \varepsilon H$. Now, fix $B\boldsymbol{\psi}^{(d)}$ and move it to the right-hand side, for the modified linear system
\begin{equation}
\label{eq:fixsys}
(I + \varepsilon H)\boldsymbol{\psi}^{(d)} = \mathbf{s}^{(d)} + B\boldsymbol{\psi}_0^{(d)}.
\end{equation}
Define $\widehat{M}_k^{-1}$ as the preconditioner associated with performing $k$ fixed-point iterations on \eqref{eq:fixsys},
with approximate inverse $\widehat{M}_1^{-1} = (I + \varepsilon H_{\leq})^{-1}$. Then, applying $\widetilde{M}^{-1}$ and
$\widehat{M}_k^{-1}$ as preconditioners for \eqref{eq:gensys} is related via
\[
\widehat{M}_k^{-1}\mathcal{H} = \left(I- \left(-\varepsilon (I +\varepsilon H_{\leq})^{-1}H_{>}\right)^k\right) \widetilde{M}^{-1}\mathcal{H}.
\]
\end{lemma}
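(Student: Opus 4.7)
The plan is to track how $k$ inner fixed-point iterations accumulate error against the exact solution of the modified system \eqref{eq:fixsys}, and then express that accumulated update as an operator applied to the residual $\mathbf{s}^{(d)} - \mathcal{H}\boldsymbol{\psi}_0^{(d)}$. First I would unfold the inner iteration. With $B\boldsymbol{\psi}_0^{(d)}$ frozen on the right-hand side, iterating $(I+\varepsilon H_{\leq})^{-1}$ on $(I+\varepsilon H)\boldsymbol{\psi}^{(d)} = \mathbf{s}^{(d)} + B\boldsymbol{\psi}_0^{(d)}$ gives the update formula
\[
\boldsymbol{\psi}_{j+1}^{(d)} = \boldsymbol{\psi}_{j}^{(d)} + (I+\varepsilon H_{\leq})^{-1}\bigl(\mathbf{s}^{(d)} + B\boldsymbol{\psi}_0^{(d)} - (I+\varepsilon H)\boldsymbol{\psi}_j^{(d)}\bigr).
\]
Subtracting the exact solution $\boldsymbol{\psi}^{*}$ of this frozen system, i.e.\ $\boldsymbol{\psi}^{*} = \widetilde{M}^{-1}(\mathbf{s}^{(d)} + B\boldsymbol{\psi}_0^{(d)})$, the error $\boldsymbol{e}_j := \boldsymbol{\psi}_j^{(d)} - \boldsymbol{\psi}^{*}$ satisfies $\boldsymbol{e}_{j+1} = J\boldsymbol{e}_j$ with $J := -\varepsilon(I+\varepsilon H_{\leq})^{-1}H_{>}$, since $I - (I+\varepsilon H_{\leq})^{-1}(I+\varepsilon H) = J$.

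Next I would use the telescoping identity $\boldsymbol{\psi}_k^{(d)} - \boldsymbol{\psi}_0^{(d)} = (I - J^k)(\boldsymbol{\psi}^{*} - \boldsymbol{\psi}_0^{(d)})$ that follows from $\boldsymbol{e}_k = J^k \boldsymbol{e}_0$. The key algebraic simplification is that
\[
\boldsymbol{\psi}^{*} - \boldsymbol{\psi}_0^{(d)} = \widetilde{M}^{-1}\bigl(\mathbf{s}^{(d)} + B\boldsymbol{\psi}_0^{(d)} - (I+\varepsilon H)\boldsymbol{\psi}_0^{(d)}\bigr) = \widetilde{M}^{-1}\bigl(\mathbf{s}^{(d)} - \mathcal{H}\boldsymbol{\psi}_0^{(d)}\bigr),
\]
which is precisely the preconditioned residual associated with $\widetilde{M}^{-1}$. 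Combining these two observations identifies $\widehat{M}_k^{-1}$ (defined implicitly by $\boldsymbol{\psi}_k^{(d)} - \boldsymbol{\psi}_0^{(d)} = \widehat{M}_k^{-1}(\mathbf{s}^{(d)} - \mathcal{H}\boldsymbol{\psi}_0^{(d)})$) with $(I - J^k)\widetilde{M}^{-1}$, and multiplying on the right by $\mathcal{H}$ gives the claimed identity.

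The only mild subtlety I foresee is interpreting $\widehat{M}_k^{-1}$ consistently as a linear operator: since the ``right-hand side'' of \eqref{eq:fixsys} involves $\boldsymbol{\psi}_0^{(d)}$, one has to verify that the net update $\boldsymbol{\psi}_k^{(d)} - \boldsymbol{\psi}_0^{(d)}$ is a linear function of the outer residual $\mathbf{s}^{(d)} - \mathcal{H}\boldsymbol{\psi}_0^{(d)}$ alone; this is exactly what the reformulation of $\boldsymbol{\psi}^{*} - \boldsymbol{\psi}_0^{(d)}$ above achieves, since the $B\boldsymbol{\psi}_0^{(d)}$ and $-(I+\varepsilon H)\boldsymbol{\psi}_0^{(d)}$ contributions combine into $-\mathcal{H}\boldsymbol{\psi}_0^{(d)}$. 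Theorem~\ref{th:mesh_cycles} with $k=3$ then follows from this lemma by expanding $J^3 = \mathcal{O}(\varepsilon^3)$ and invoking the DSA preconditioners of Theorem~\ref{th:MIP} or Theorem~\ref{th:new_DSA}, which absorb a factor of $1/\varepsilon^2$ against $P_0(I-T_\varepsilon)P_0$ and so leave an $\mathcal{O}(\varepsilon)$ residual.
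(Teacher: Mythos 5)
Your proof is correct and follows essentially the same route as the paper: both identify $J=-\varepsilon(I+\varepsilon H_{\leq})^{-1}H_{>}$ as the error propagator of a single inner sweep with the scalar-flux term frozen, and both reduce the claim to the fact that the frozen-source residual at $\boldsymbol{\psi}_0^{(d)}$ coincides with the outer residual $\mathbf{s}^{(d)}-\mathcal{H}\boldsymbol{\psi}_0^{(d)}$, so that $\widehat{M}_k^{-1}=\left(I-J^k\right)\widetilde{M}^{-1}$. Your iterate-level error recursion and telescoping simply bypass the paper's explicit geometric-series representation of $\widehat{M}_k^{-1}$ and of the difference $\widehat{M}_k^{-1}-\widetilde{M}^{-1}$, arriving at the same identity.
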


\begin{proof}[Proof of Theorem \ref{th:mesh_cycles}]

Consider a problem of the form 
\[
(I + \varepsilon H_{\leq} + \varepsilon H_{>} - B)\boldsymbol{\psi}^{(d)} = \mathbf{s}^{(d)},
\]
where $\mathcal{H} := (I + \varepsilon H_{\leq} + \varepsilon H_{>} - B)$. Note the following identities, which will be used regularly:
\begin{align*}
(I + \varepsilon H_{\leq} + \varepsilon H_{>} - B)^{-1} & = 
  \left[I - (I +\varepsilon H_{\leq} + \varepsilon H_{>})^{-1}B\right]^{-1}(I +\varepsilon H_{\leq} + \varepsilon H_{>})^{-1}, \\
(I + \varepsilon H_{\leq} + \varepsilon H_{>} - B)^{-1} & =
  \left[I - (I +\varepsilon H_{\leq})^{-1}(-\varepsilon H_{>}+B)\right]^{-1}(I +\varepsilon H_{\leq})^{-1} , \\
(I + \varepsilon H_{\leq} + \varepsilon H_{>})^{-1} & = \left(I + \varepsilon(I+\varepsilon H_{\leq})^{-1}H_{>}\right)^{-1}(I + \varepsilon H_{\leq})^{-1}.
\end{align*}
First, consider a single fixed-point iteration, where we invert $I + \varepsilon H_{\leq} + \varepsilon H_{>}$. Define
$\widetilde{M}^{-1} = (I + \varepsilon H_{\leq} + \varepsilon H_{>})^{-1}$. Then, the preconditioned linear system 
is given by $\widetilde{M}^{-1}(\mathcal{H}\boldsymbol{\psi}^{(d)}-\mathbf{s}^{(d)} ) = \mathbf{0}$, where
\begin{align*}
\widetilde{M}^{-1}\mathcal{H} & =  (I + \varepsilon H_{\leq} + \varepsilon H_{>})^{-1}(I + \varepsilon H_{\leq} + \varepsilon H_{>} - B) \\
  & = I - (I +\varepsilon H_{\leq} + \varepsilon H_{>})^{-1}B.
\end{align*}

Now suppose we only invert $I + \varepsilon H_{\leq}$, that is, our preconditioner is given by $\widehat{M}_1^{-1}=(I+\varepsilon H_{\leq})^{-1}$.
This arises, for example, in the case of cycles in the mesh, where we can only directly invert the block lower triangular part. 
In the interest of asymptotics, additionally consider moving $B\boldsymbol{\psi}$ to the right-hand side and applying multiple iterations of
$\widehat{M}_1^{-1}$ to the modified linear system,
$\widehat{\mathcal{H}}\boldsymbol{\psi}^{(d)} = \widehat{\mathbf{s}}^{(d)}$, given by
\[
(I + \varepsilon H_{\leq} + \varepsilon H_{>})\boldsymbol{\psi}^{(d)} =
  \mathbf{s}^{(d)} +  B{\boldsymbol{\psi}}_0^{(d)},
\]
where ${\boldsymbol{\psi}}_0^{(d)}$ is fixed for all iterations.
In a fixed-point sense, this is equivalent to
\[
\boldsymbol{\psi}_{k+1}^{(d)} =
  \boldsymbol{\psi}_k^{(d)} + 
  (I + \varepsilon H_{\leq})^{-1}(\mathbf{s}^{(d)} + B\boldsymbol{\psi}_0^{(d)} -
  \varepsilon(\mathcal{H}_{\leq} + \mathcal{H}_{>}) \boldsymbol{\psi}_k^{(d)}),
\]
with error propagation given by
\begin{align*}
I - \widehat{M}_1^{-1}\widehat{\mathcal{H}} & = I - (I + \varepsilon H_{\leq})^{-1}\widehat{\mathcal{H}} \\
& = -\varepsilon(I + \varepsilon H_{\leq})^{-1}H_{>}.
\end{align*}
Then, we are interested in the preconditioner $\widehat{M}_k$ that results
from taking powers of $I - \widehat{M}_k^{-1}\widehat{\mathcal{H}} = (I - \widehat{M}_1^{-1}\widehat{\mathcal{H}})^k $.
Solving for $\widehat{M}_k^{-1}$,
\begin{align*}
\widehat{M}_k^{-1}\widehat{\mathcal{H}} & = I - \left(-\varepsilon(I + \varepsilon H_{\leq})^{-1}H_{>}\right)^k , \\
\widehat{M}_k^{-1}& = \widehat{\mathcal{H}}^{-1} -  \left(-\varepsilon(I + \varepsilon H_{\leq})^{-1}H_{>}\right)^k\widehat{\mathcal{H}}^{-1} \\
& = \left(I - -\varepsilon(I + \varepsilon H_{\leq})^{-1}H_{>}\right]){-1}(I +\varepsilon H_{\leq})^{-1} - \\&\hspace{8ex}
  \left(-\varepsilon(I + \varepsilon H_{\leq})^{-1}H_{>}\right)^k\left(I - (I +\varepsilon H_{\leq})^{-1}(-\varepsilon H_{>}+B)\right)^{-1}(I +\varepsilon H_{\leq})^{-1} \\
& = \left[ \sum_{\ell=0}^\infty \left( -\varepsilon(I + \varepsilon H_{\leq})^{-1}H_{>}\right)^\ell - 
  \sum_{\ell=k}^\infty \left( -\varepsilon(I + \varepsilon H_{\leq})^{-1}H_{>}\right)^\ell \right] (I +\varepsilon H_{\leq})^{-1} \\
& = \left[\sum_{\ell=0}^{k-1} \left( -\varepsilon(I + \varepsilon H_{\leq})^{-1}H_{>}\right)^\ell \right] (I +\varepsilon H_{\leq})^{-1}.
\end{align*}

Now, suppose we apply $\widehat{M}_k^{-1}$ as a preconditioner for the original
linear system, $\mathcal{H}\boldsymbol{\psi}^{(d)} = \mathbf{s}^{(d)}$, and
consider the difference between $\widehat{M}_k^{-1}$ and $\widetilde{M}^{-1}$:
\begin{align*}
\widehat{M}_k^{-1}  - \widetilde{M}^{-1} & = \left[\sum_{\ell=0}^{k-1} \left( -\varepsilon
  (I +\varepsilon H_{\leq})^{-1}H_{>}\right)^\ell \right] (I +\varepsilon H_{\leq})^{-1} -
  (I + \varepsilon H_{\leq} + \varepsilon H_{>})^{-1} \\
& = \left[\sum_{\ell=0}^{k-1} \left( -\varepsilon
  (I +\varepsilon H_{\leq})^{-1}H_{>}\right)^\ell \right] (I +\varepsilon H_{\leq})^{-1} -
  \left(I + \varepsilon(I+\varepsilon H_{\leq})^{-1}H_{>}\right)^{-1}(I + \varepsilon H_{\leq})^{-1} \\
& = \left[\sum_{\ell=0}^{k-1} \left( -\varepsilon
  (I +\varepsilon H_{\leq})^{-1}H_{>}\right)^\ell - \sum_{\ell=0}^{\infty} \left( -\varepsilon
  (I +\varepsilon H_{\leq})^{-1}H_{>}\right)^\ell\right] (I +\varepsilon H_{\leq})^{-1} \\
& = -\left[ \sum_{\ell=k}^{\infty} \left( -\varepsilon
  (I +\varepsilon H_{\leq})^{-1}H_{>}\right)^\ell \right] (I +\varepsilon H_{\leq})^{-1} \\
& = -\left(-\varepsilon(I +\varepsilon H_{\leq})^{-1}H_{>}\right)^k \left[ \sum_{\ell=0}^{\infty} \left( -\varepsilon
  (I +\varepsilon H_{\leq})^{-1}H_{>}\right)^\ell \right] (I +\varepsilon H_{\leq})^{-1} \\
& = -\left(-\varepsilon(I +\varepsilon H_{\leq})^{-1}H_{>}\right)^k \widetilde{M}^{-1}.
\end{align*}

Then,
\[
\widehat{M}_k^{-1}\mathcal{H} = \left[ I - \left(-\varepsilon (I +\varepsilon H_{\leq})^{-1}H_{>}\right)^k\right]\widetilde{M}^{-1}\mathcal{H}.
\]
\end{proof}

\section{Numerical experiments\label{sec:numerical}}
In this section we report numerical results
\cblue{from the three major approaches presented in the previous sections,
namely, the SIP DSA (Theorem \ref{th:MIP}),
its IP modification (Section \ref{sec:consistent discretization}),
and the additive DSA preconditioner (Theorem \ref{th:new_DSA}).} \cred{We also show that performing two 
additional transport sweeps in between DSA steps  (Theorem~\ref{th:mesh_cycles}) can greatly accelerate (or prevent divergence of) source iteration on 
HO meshes with mesh cycles.}

We present calculations and comparisons on highly curved 2D and 3D meshes that
are obtained from moving mesh hydrodynamic simulations \cite{Dobrev2012}.
The methods in this paper are implemented by utilizing the finite element
infrastructure provided by the MFEM finite element library \cite{mfem}.
\subsection{DSA preconditioning on a HO Lagrangian mesh}
\label{sec_numerical_2D}

This section uses DSA to solve the discrete transport equations (\ref{eq:Tmat1})
on a HO hydrodynamics mesh generated from a purely Lagrangian simulation
of the ``triple point'' problem \cite{triplePoint}, which is displayed
in Figure~\ref{fig:triple-point mesh} \cred{(the spatial domain for this problem is $[0,7] \times [0,3]$)}. The mesh is $3^{\text{rd}}$-order
mesh, that is, cubic polynomials are used to map the reference element
to physical elements, and our DG discretization uses $3^{\text{rd}}$-order
local basis functions. \cred{We also use an S2 quadrature discretization.}

\begin{figure}[h!]
  \centerline
  {
    \includegraphics[width=0.60\textwidth]{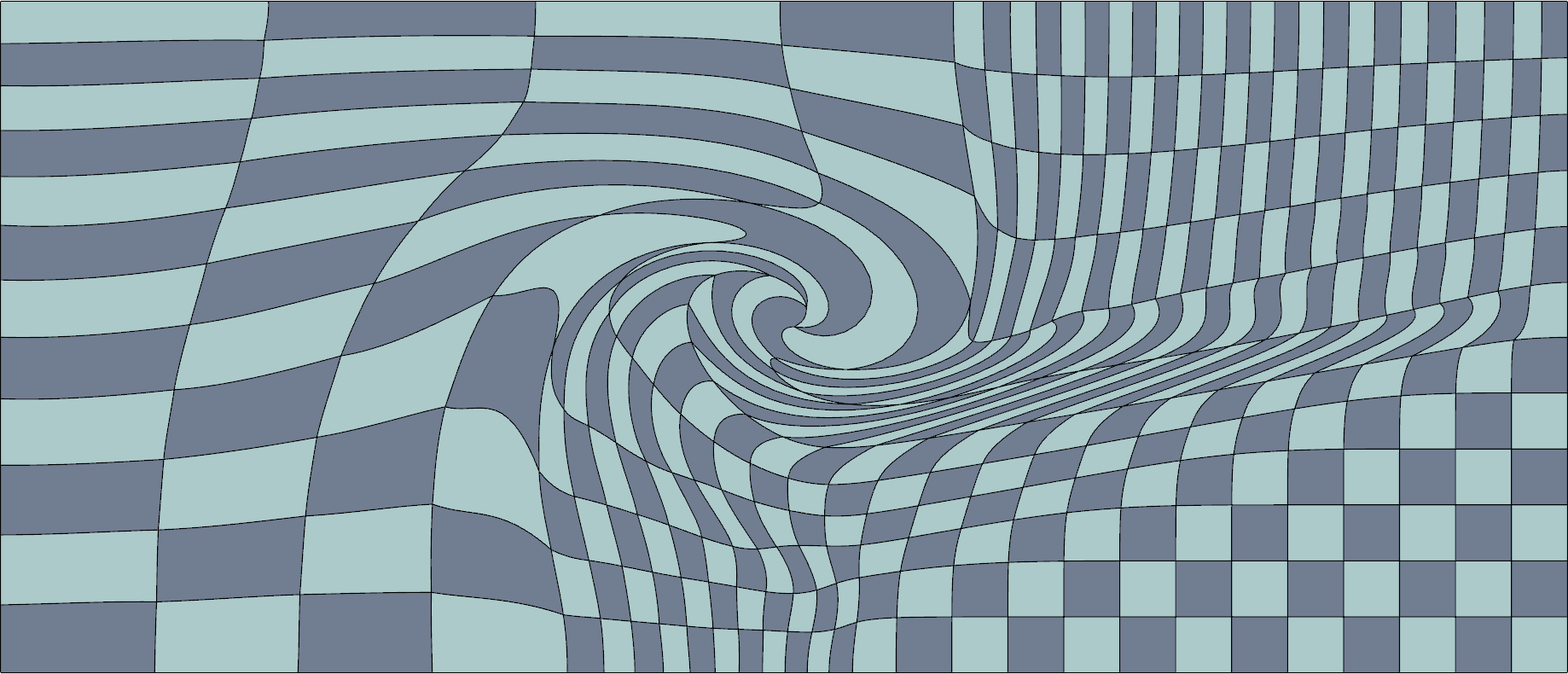}
  }
  \caption
  {
     A ``triple point'' $3^{\text{rd}}$-order Lagrangian mesh.
  }
  \label{fig:triple-point mesh}
\end{figure}

\begin{figure}
\centering
\label{fig:Iteration-triple-point}
\subfloat[$\varepsilon = 10^{-4}$]{ \includegraphics[width = 2in]{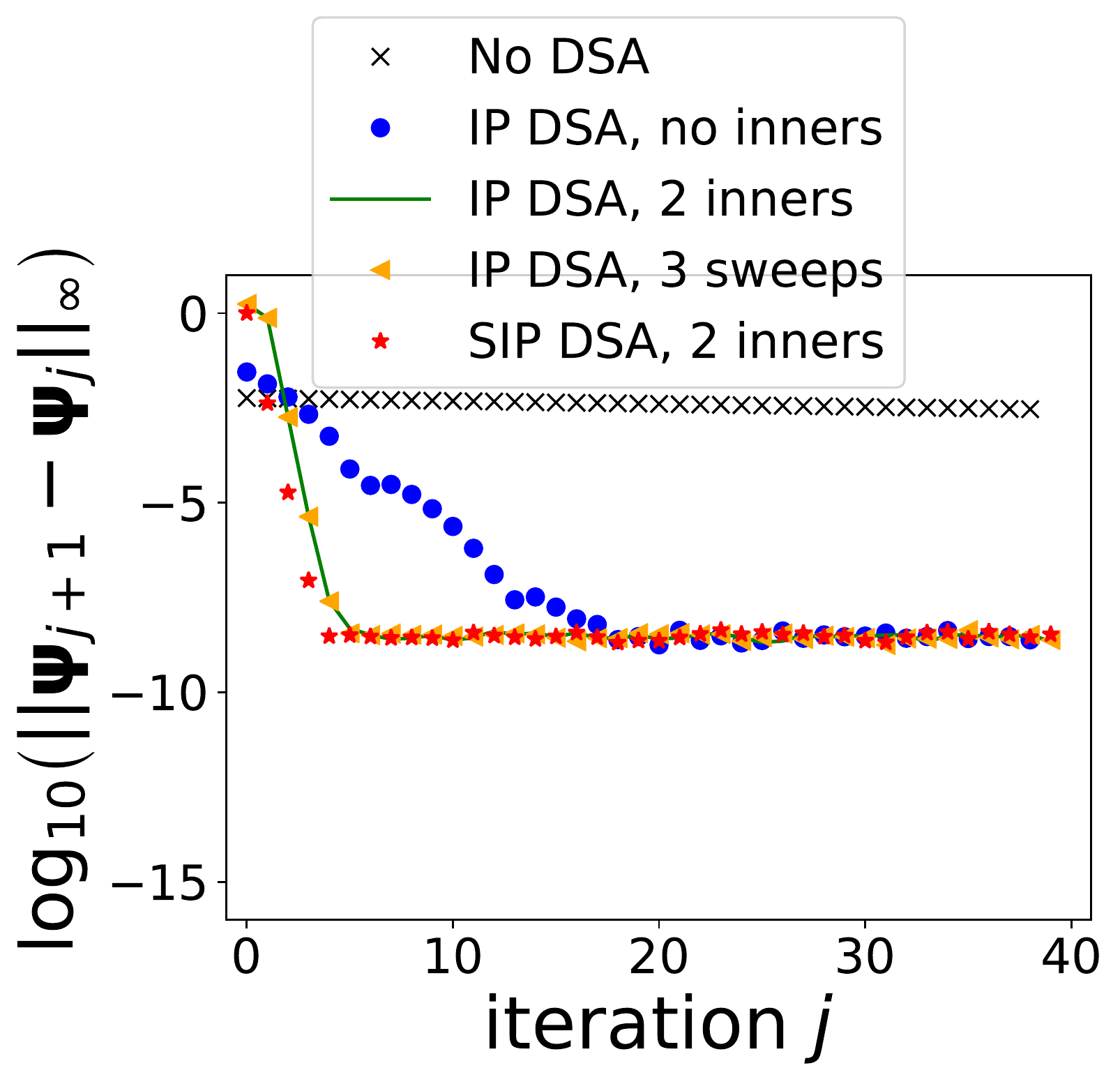} }
\subfloat[$\varepsilon = 10^{-3}$]{ \includegraphics[width = 2in]{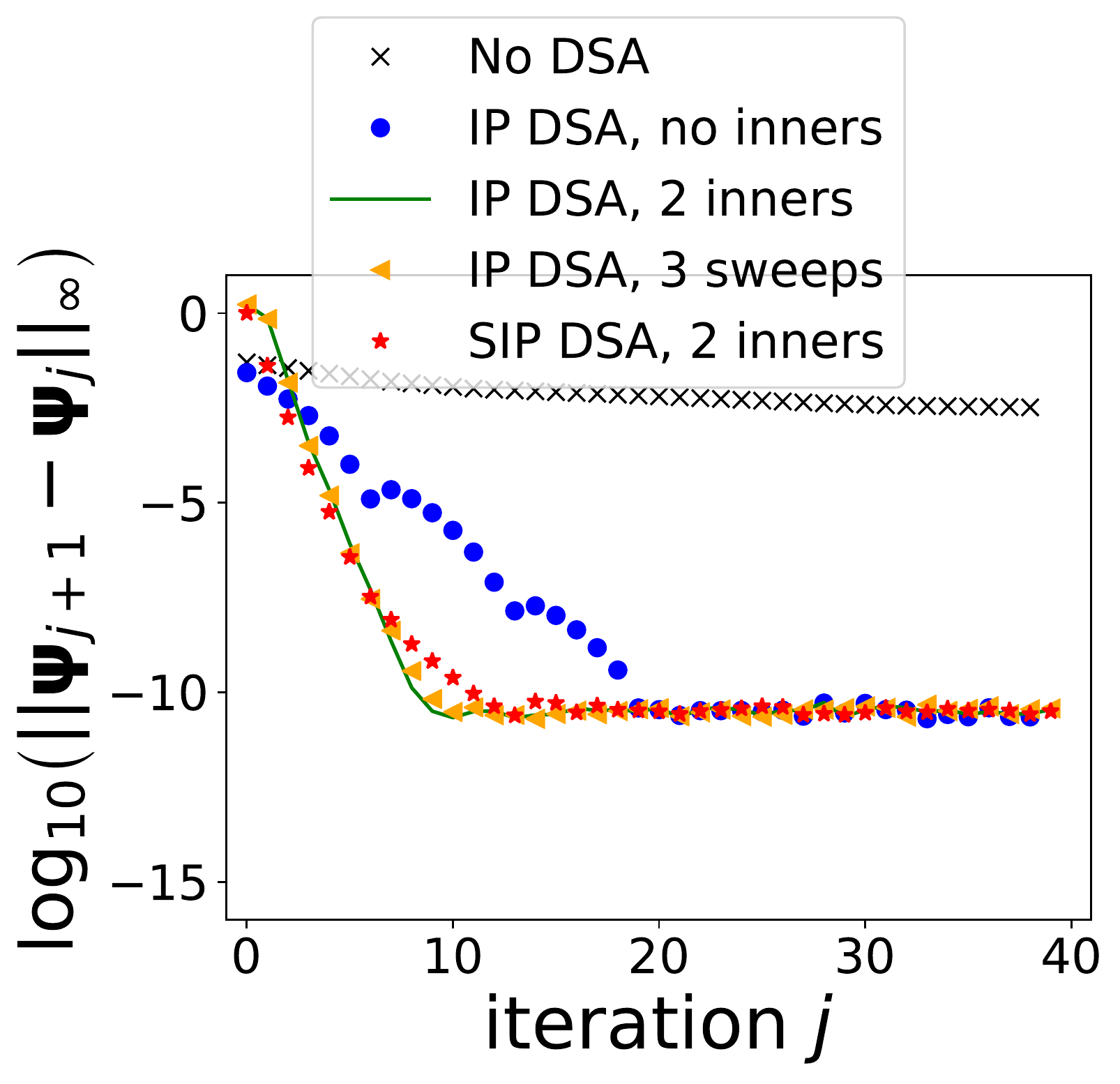}  }
\subfloat[$\varepsilon = 10^{-2}$]{ \includegraphics[width = 2in]{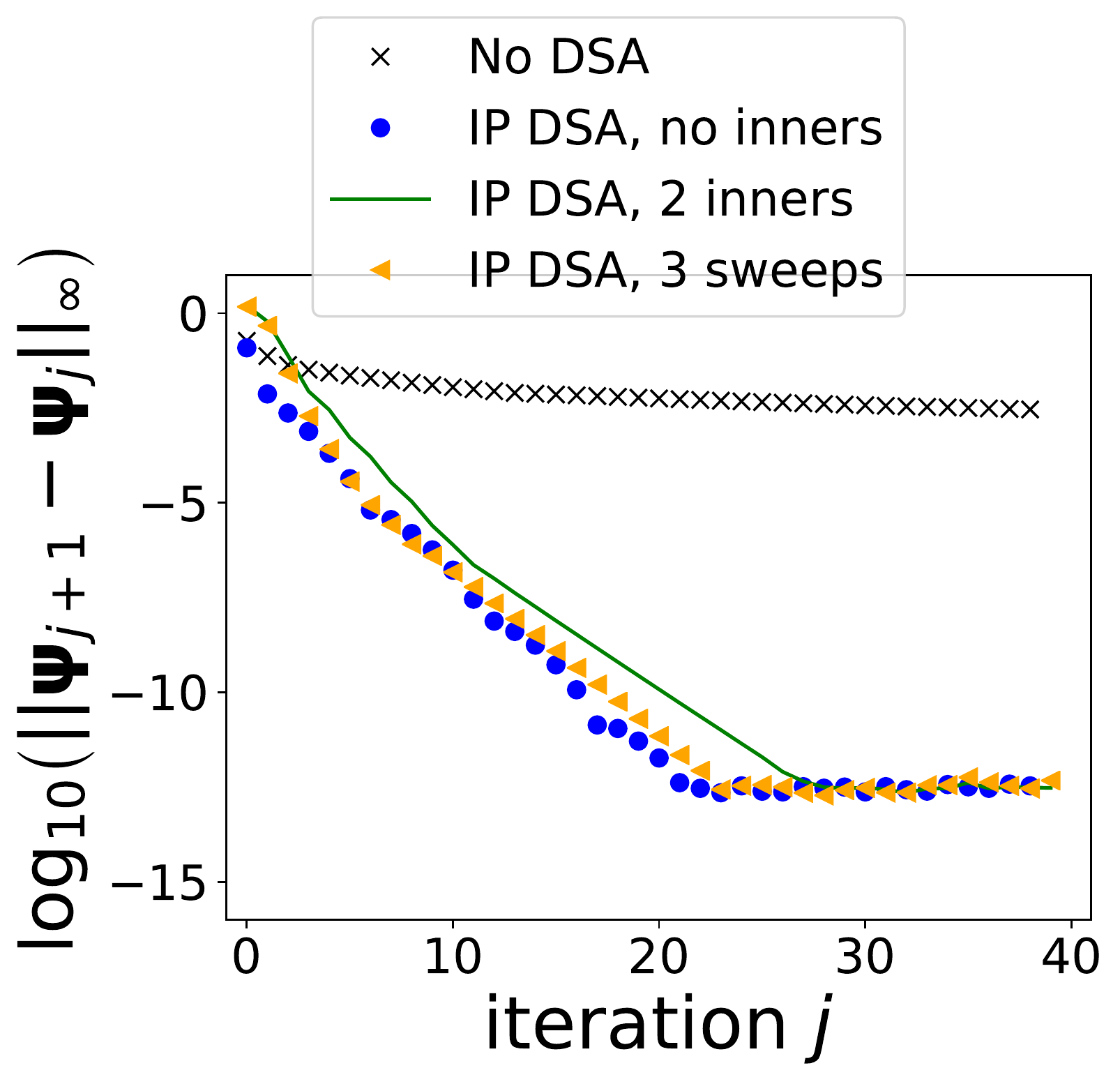} }\\
\subfloat[$\varepsilon = 10^{-1}$]{ \includegraphics[width = 2in]{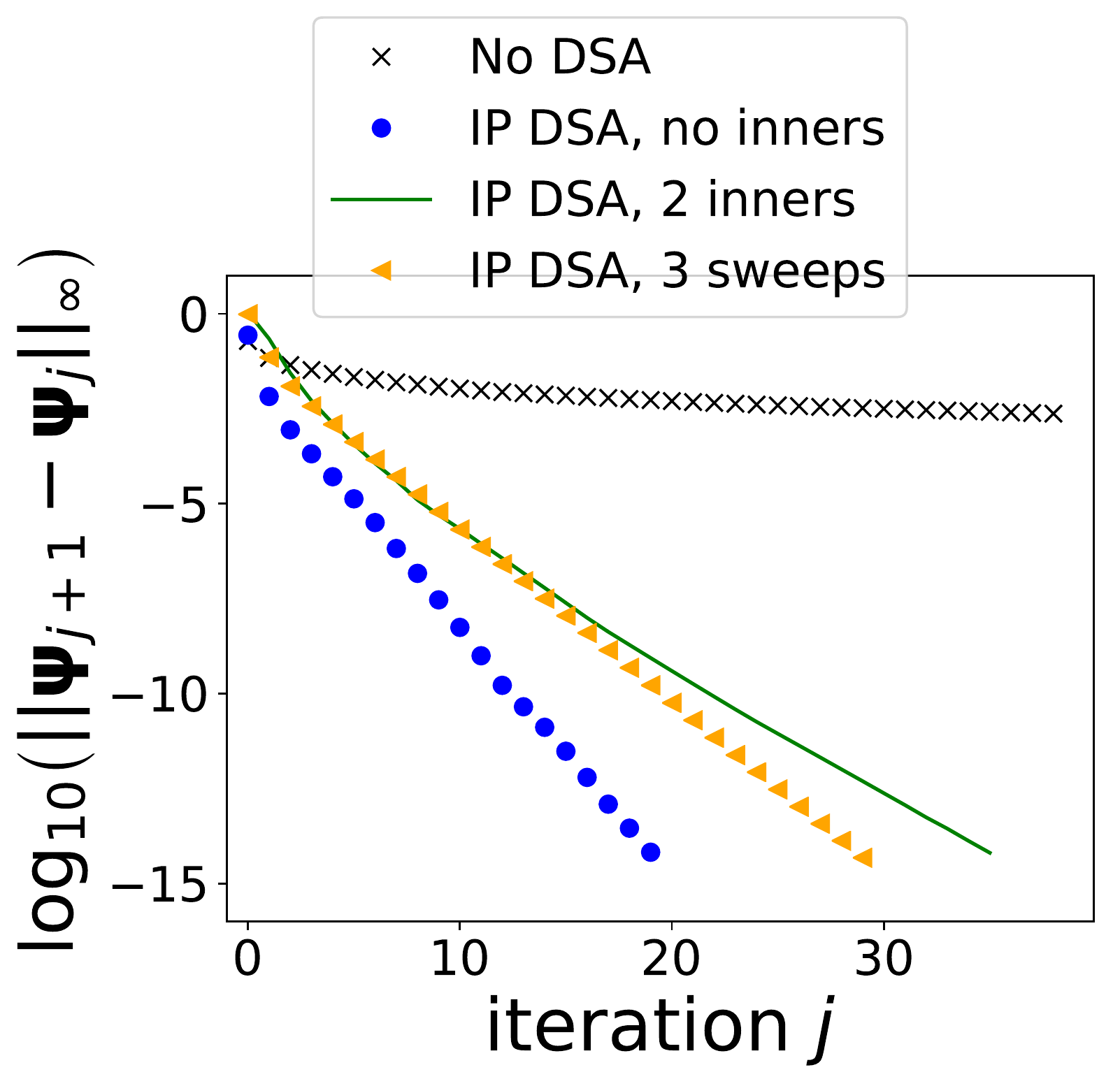} }
\subfloat[$\varepsilon = 0.75$]{ \includegraphics[width = 2in]{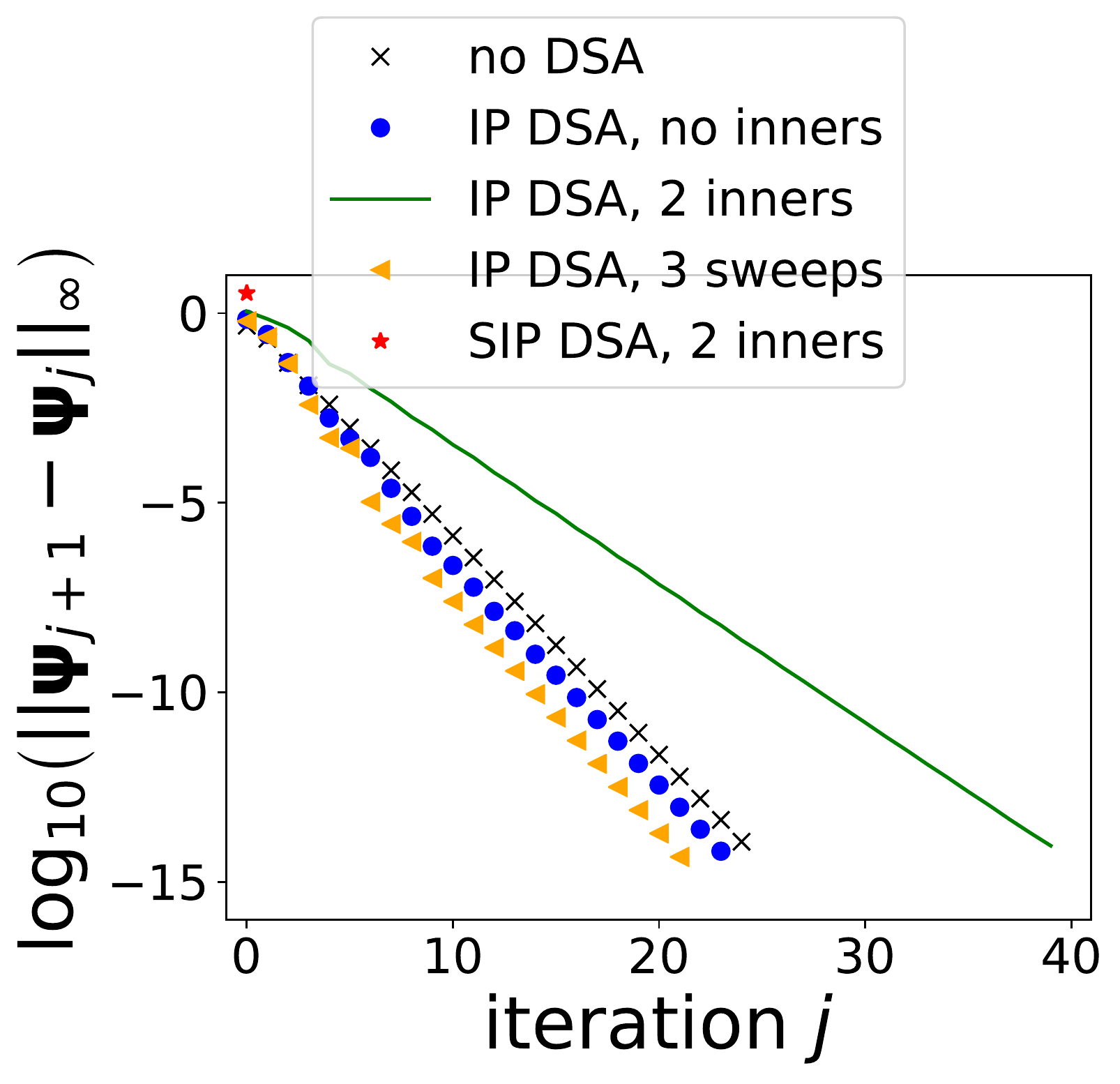}  }
\caption{Iteration error estimate $\|\boldsymbol{\psi}_{j+1}-\boldsymbol{\psi}_{j+1}\|_{\infty}$
as a function of iteration index $j$ on the triple point problem,
with and without DSA preconditioning using the DSA matrix \eqref{eq:modified DSA matrix}. \cgreen{In all cases considered, we always perform $40$ ``iterations".} Although the iteration index $j$ has a
different interpretation for the five displayed cases ``no DSA", ``IP DSA, no inners", ``IP DSA, 2 inners", ``IP DSA, 3 sweeps", and ``SIP DSA, 2 inners", each iteration involves the same
number of transport sweeps. For example, ``IP DSA, 2 inners" refers to the nonsymmetric interior penalty version of DSA with three total transport sweeps between DSA steps (but with a fixed scalar flux), and ``IP DSA, 3 sweeps" refers to the nonsymmetric interior penalty version of DSA with a DSA step three transport sweeps (where the scalar flux is updated after each sweep). \cgreen{Similarly, each iteration index in the ``no DSA" option corresponds to performing $3$ transport sweeps per iteration index.} In the plots where ``SIP DSA, 2 inners" is not displayed, the fixed-point iteration diverged for this case.}
\end{figure}

\noindent
For this problem, we use constant opacities
\[
\sigma_{t}\left(x_{1},x_{2}\right)=\frac{1}{\varepsilon},\,\,\,\,\,\sigma_{a}=\varepsilon,
\]
and a smooth (but arbitrary) source term
\[
q\left(x_{1},x_{2}\right)=\varepsilon\cos^{2}\left(2x_{1}+x_{2}\right),
\]
where $\varepsilon$ is the characteristic mean free path. In our numerical experiments, we vary $\varepsilon$ from relatively optically thin regimes $\varepsilon = .75$, to increasingly optically thick regimes  $\varepsilon=10^{-j}$, for $j=1,2,3,4$.
Last, constant inflow boundary conditions are applied,
\[
\psi_{d}\left(\mathbf{x}\right)=1\hspace{3ex}\textnormal{when}\hspace{3ex}
  \boldsymbol{\Omega}_{d}\cdot\mathbf{n}\left(\mathbf{x}\right)<0 \,\,\,\,\, \text{and} \,\,\,\,\, \mathbf{x}\in \partial \mathcal{D} .
\]

Figure~\ref{fig:Iteration-triple-point} shows the iteration error estimate $\|\boldsymbol{\psi}_{j+1}-\boldsymbol{\psi}_{j+1}\|_{\infty}$
as a function of iteration index $j$, with and without DSA preconditioning. Recall, due to cycles in the mesh, the transport equation
for a fixed angle cannot be easily inverted, so we invert the block lower triangular part of the matrix, and refer to this as a
``transport sweep.'' When DSA preconditioning is included, we consider using a single transport sweep with lagging
between DSA steps, as well as using two ``inner sweeps,'' where the scalar flux is not updated, followed by one normal sweep
with lagging between DSA steps (see Theorem \ref{th:mesh_cycles}). Finally, we also consider performing, between every
DSA step, three transport sweeps, where the scalar flux is updated after each sweep.
To ensure a fair comparison, the iteration index $j$ in all five cases displayed in Figure~\ref{fig:Iteration-triple-point}
accounts for \textit{the same number of
transport sweeps}; however, because of this, each case has a different interpretation:

\begin{enumerate}
\item In the ``no DSA" case, the iteration index $j$ corresponds to three
  applications of the fixed-point iteration without any DSA, i.e.,
  $(\text{sweep, update flux})^3$;
  for example, $j=10$ corresponds to 30 fixed-point iterations.
\item In the ``IP DSA, no inners" case, the iteration index $j$ represents
  three applications of a transport sweep and nonsymmetric interior penalty
  (IP) DSA step, i.e., $(\text{sweep, update flux, IP DSA})^3$.
\item In the ``IP DSA, $2$ inners" case, $j$ corresponds
  to three applications of a transport sweep followed by a scalar flux update
  and a single IP DSA step, that is, ((sweep$)^3$, update flux, IP DSA).
\item In the ``IP DSA, 3 sweeps" case, the index $j$ represents three
  applications of both a transport sweep and scalar flux update,
  followed by a nonsymmetric interior penalty (IP) DSA step, i.e.,
  ($(\text{sweep, update flux})^3$, IP DSA).
\item Finally, the ``SIP DSA, $2$ inners" case is the same as the
  ``IP DSA, $2$ inners" case, but with the symmetric interior penalty DSA
  matrix used instead.
\end{enumerate}

Because the sweep is typically computationally 
much more expensive than the DSA step, each iteration index in Figure~\ref{fig:Iteration-triple-point} approximately represents the
same computational work for each case. In particular, for small $\varepsilon$, Figure~\ref{fig:Iteration-triple-point} provides numerical
confirmation of the asymptotic result Theorem \ref{th:mesh_cycles}. Using three sweeps before each IP DSA step leads to a $4\times $
speedup for $\varepsilon = 10^{-4}$ when using the nonsymmetric IP DSA matrix (at a slightly lesser cost as well, due to two less diffusion solves), although the cost increases
in the optically thin regime relative to using no additional transport sweeps. In addition, although we didn't show this in Figure~\ref{fig:Iteration-triple-point}, the SIP DSA variant actually diverges for $\varepsilon = 10^{-3}$ and $\varepsilon = 10^{-4}$ when the inner iterations are not performed.

Table~\ref{tab:The-final-residual} displays the $L^{\infty}$ residuals of the final iterates,
\begin{equation}
\max_{d}\left\Vert \left(\boldsymbol{\Omega}_{d}\cdot\mathbf{G}+F^{(d)}+\frac{1}{\varepsilon}M_{t}\right)\boldsymbol{\psi}^{(d)}-\frac{1}{4\pi}\left(\frac{1}{\varepsilon}M_{t}-\varepsilon M_{a}\right)\boldsymbol{\varphi}-\frac{1}{4\pi}\left(\boldsymbol{q}_{\text{inc}}^{(d)}+\varepsilon\boldsymbol{q}^{(d)}\right)\right\Vert _{\infty},\label{eq:residual}
\end{equation}
as well as the iterations counts. Together, Figure~\ref{fig:Iteration-triple-point}  and Table~\ref{tab:The-final-residual}
confirm that DSA preconditioning on the HO mesh is effective across a wide range of characteristic mean free
paths. Interestingly, although using three transport sweeps between DSA steps is more effective for small $\varepsilon$,
for larger values of $\varepsilon$ it is best to apply a DSA step after each sweep. 

\begin{table}[h!]
\begin{center}
\caption{The final residual (\ref{eq:residual}) after 40 iterations with and without DSA preconditioning.\label{tab:The-final-residual}}
\begin{tabular}{|c|c|c|c|c|cl}
\hline 
$\varepsilon$ & IP DSA, 3 sweeps & IP DSA, 2 inners & SIP DSA, 2 inners & IP DSA, no inners & no DSA   \tabularnewline
\hline 
\hline 
 0.75 & 4.84e-15  &   2.37e-15 & 2.31e-15  & 3.02e-15 & 3.23e-15  \tabularnewline
\hline 
  1e-1 & 1.50e-14  & 3.34e-15   & diverged &  4.24e-15 & 6.92e-03   \tabularnewline
\hline 
 1e-2 & 1.95e-13   & 3.04e-13 & diverged & 2.05e-13 &  6.16e-02  \tabularnewline
\hline 
 1e-3 &  1.67e-11  &  2.04e-11  & 1.93e-11 & 2.23e-11 & 1.41e-01 \tabularnewline
\hline 
  1e-4 &  1.97e-09  & 1.69e-09   & 1.98e-09 & 2.77e-09 & 8.56e-01  \tabularnewline
\hline 
\end{tabular}
\end{center}
\end{table}

Note that as $\varepsilon$ gets smaller than $10^{-4}$, the DSA
preconditioner begins to degrade in effectiveness and ultimately leads
to a divergent fixed-point iteration. This degradation in efficiency
is likely due to the fact that the condition number of the system
(\ref{eq:psi_d}) scales like $\mathcal{O}\left(\varepsilon^{-2}\right)$,
and for smaller values of $\varepsilon$ the delicate cancellations
in the derivation of the DSA preconditioner can no longer be adequately
captured in floating point arithmetic.

\subsection{DSA preconditioning on a 3D curved mesh}
\label{sec_numerical_3D}

This section uses DSA to solve the discrete transport equations (\ref{eq:Tmat1})
on a HO hydrodynamics mesh generated from a purely Lagrangian simulation
of a 3D Rayleigh-Taylor instability (see Figure \ref{fig_3D_mesh}).
Again we utilize a $3^{\text{rd}}$-order mesh and
$3^{\text{rd}}$-order basis functions. \cred{ We also use an S4 quadrature discretization. }

\begin{figure}[!ht]
\label{fig_3D_mesh}
\centerline
{ \includegraphics[width=0.6\textwidth]{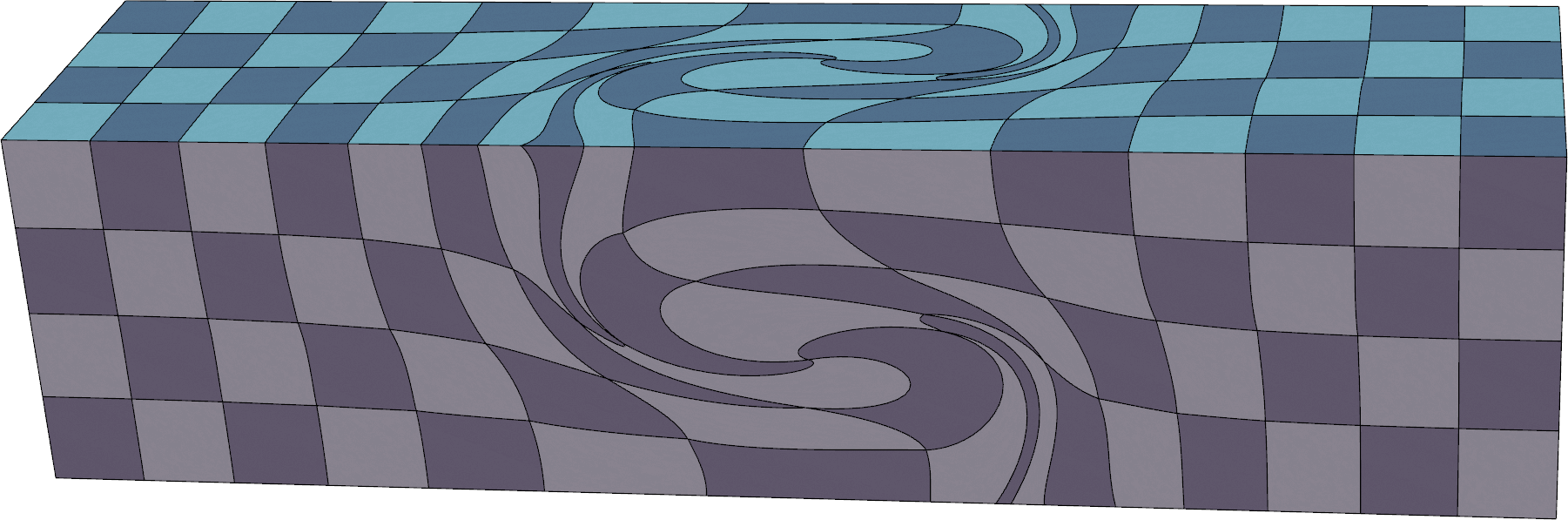} }
\vspace{4mm}
\centerline
{ \includegraphics[width=0.6\textwidth]{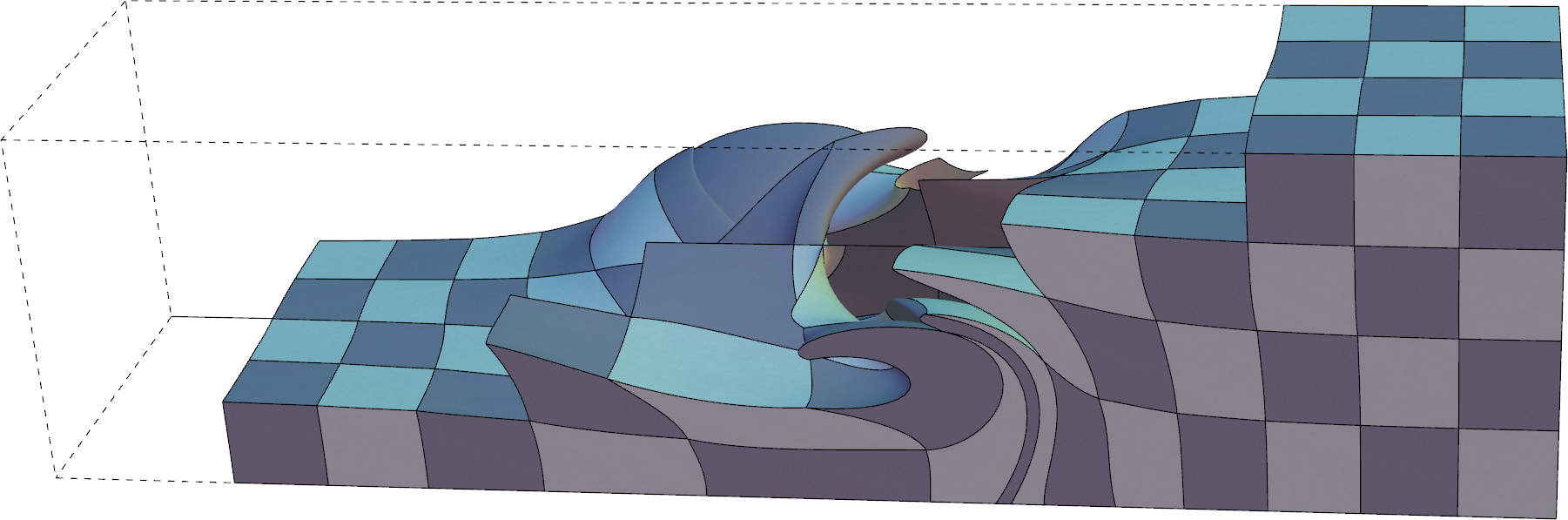} }
\caption { Cubic 3D mesh (top) resulting from a Lagrangian simulation of the
           Rayleigh-Taylor instability.
           A subset of the mesh elements is shown in the bottom. }
\end{figure}

\noindent
For this problem we use spatially dependent opacities
\[
\sigma_{t}(x_1, x_2, x_3) = \frac{x_1^2 + x_1 x_2 + 1}{\varepsilon},
\qquad
\sigma_{a}(x_1, x_2, x_3) = \frac{x_1^2 + x_1 x_2 + 1}{\varepsilon} -
                            \varepsilon(x_1^2 + x_1 x_2 + 0.5),
\]
and a smooth source term
\[
q(x_1, x_2, x_3) = \sin^{2}( 4 x_1 + 2 x_2 + 2 x_2 x_3 ) + 1,
\]
where $\varepsilon$ is the characteristic mean free path.
As in the previous section, we vary $\varepsilon$ from relatively optically
thin regimes $\varepsilon = 0.75$, to increasingly optically thick regimes
$\varepsilon=10^{-j}$, for $j=1,2,3,4$.
Lastly, constant inflow boundary conditions are applied:
\[
\psi_d(\mathbf{x}) = 1 \quad \textnormal{when} \quad
  \boldsymbol{\Omega}_{d} \cdot \mathbf{n}(\mathbf{x}) < 0
  \quad \text{and} \quad \mathbf{x}\in \partial \mathcal{D}.
\]

We repeat the numerical experiments from Section \ref{sec_numerical_2D}
to this 3D problem, using the above configuration.
The notation in Figure \ref{fig_iteration_3D} and Table \ref{tab_residual_3D}
follows the notation in Section \ref{sec_numerical_2D}.
We observe that all DSA preconditioning options for this 3D problem lead to
similar convergence trends as in the 2D problem.
{\color{black} Again, best convergence is achieved by the IP DSA options. }

\begin{figure}[!ht]
\label{fig_iteration_3D}
\centering
\subfloat[$\varepsilon = 10^{-4}$]
{ \includegraphics[width = 2in]{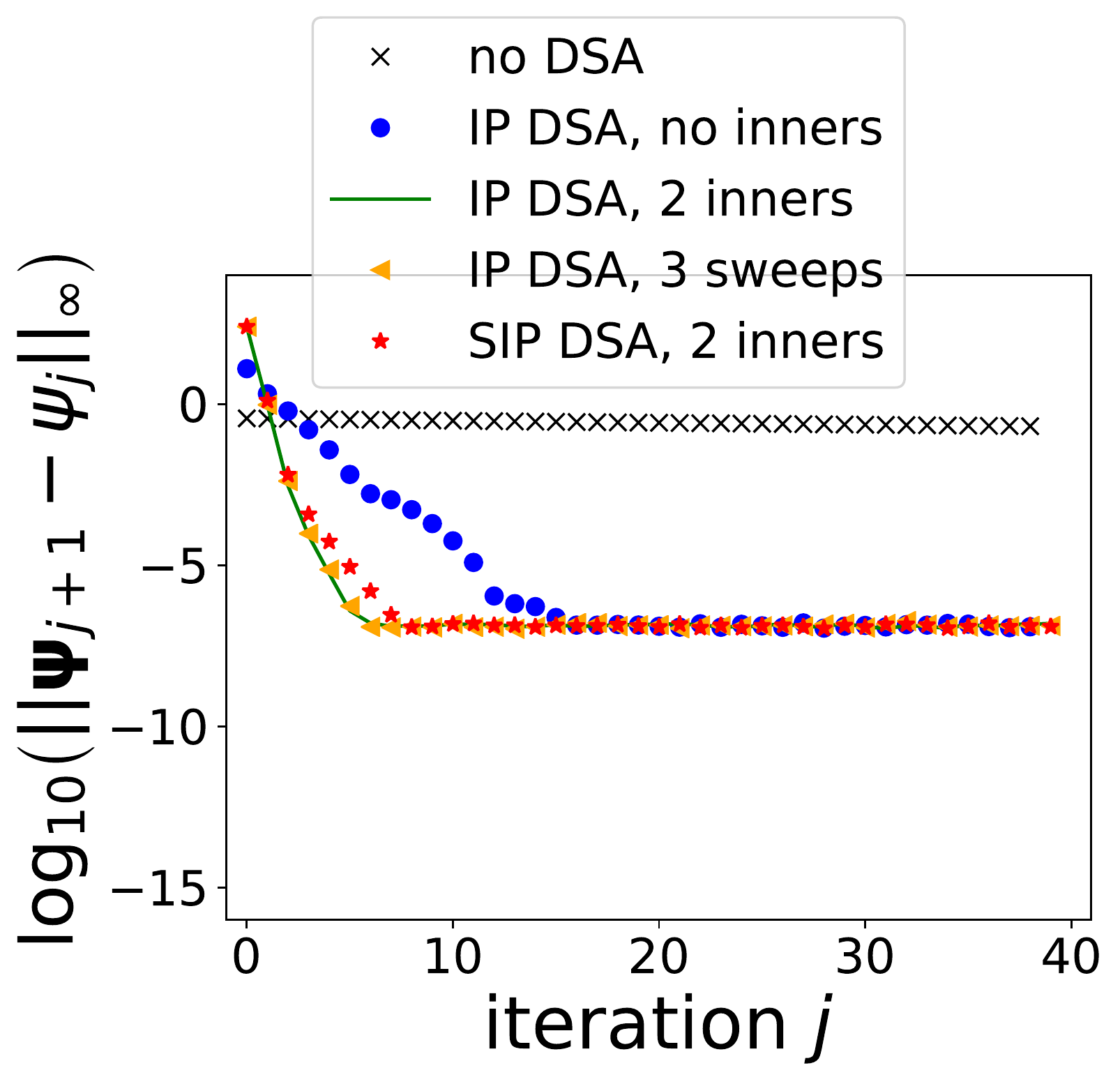} }
\subfloat[$\varepsilon = 10^{-3}$]
{ \includegraphics[width = 2in]{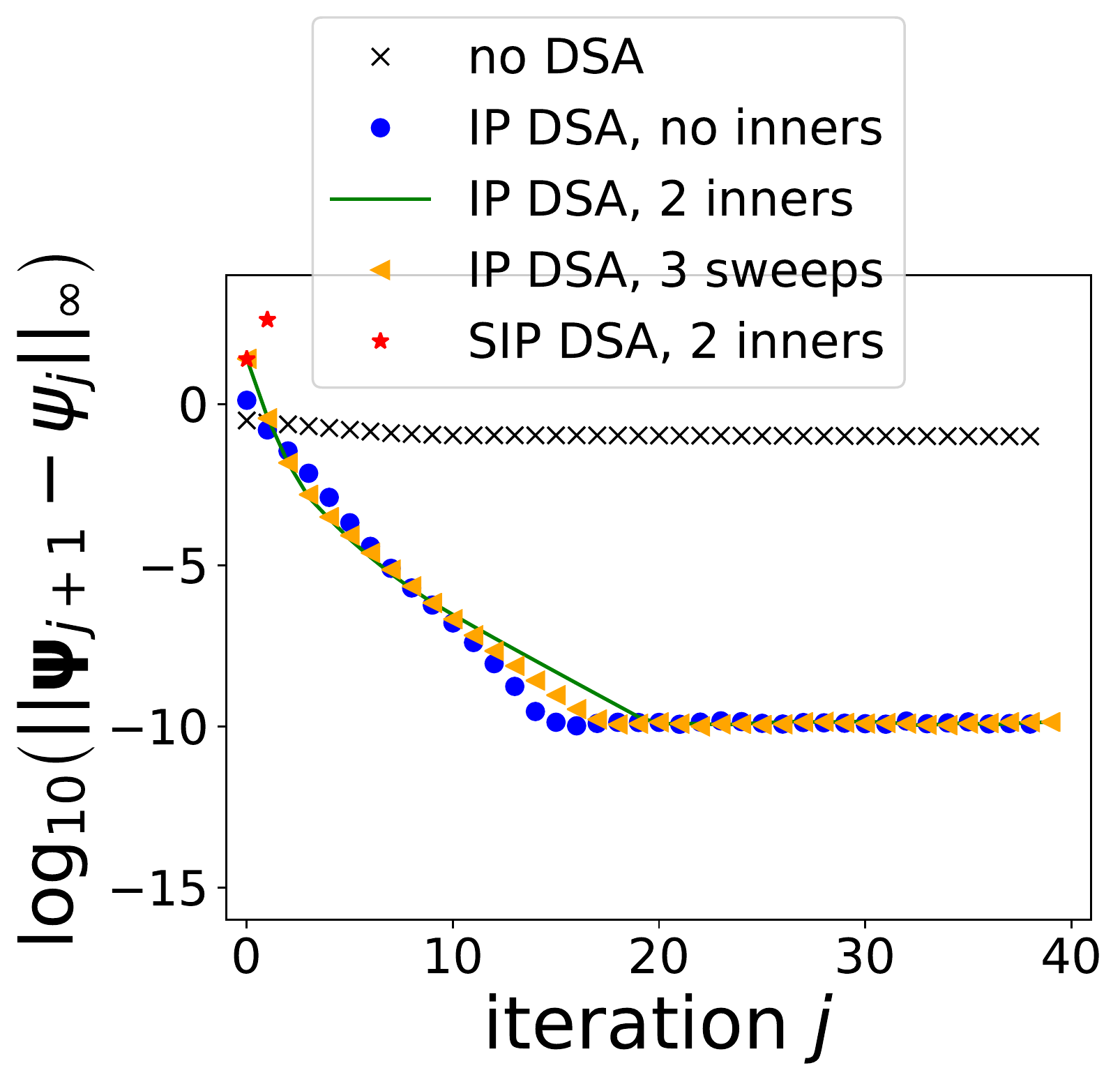}  }
\subfloat[$\varepsilon = 10^{-2}$]
{ \includegraphics[width = 2in]{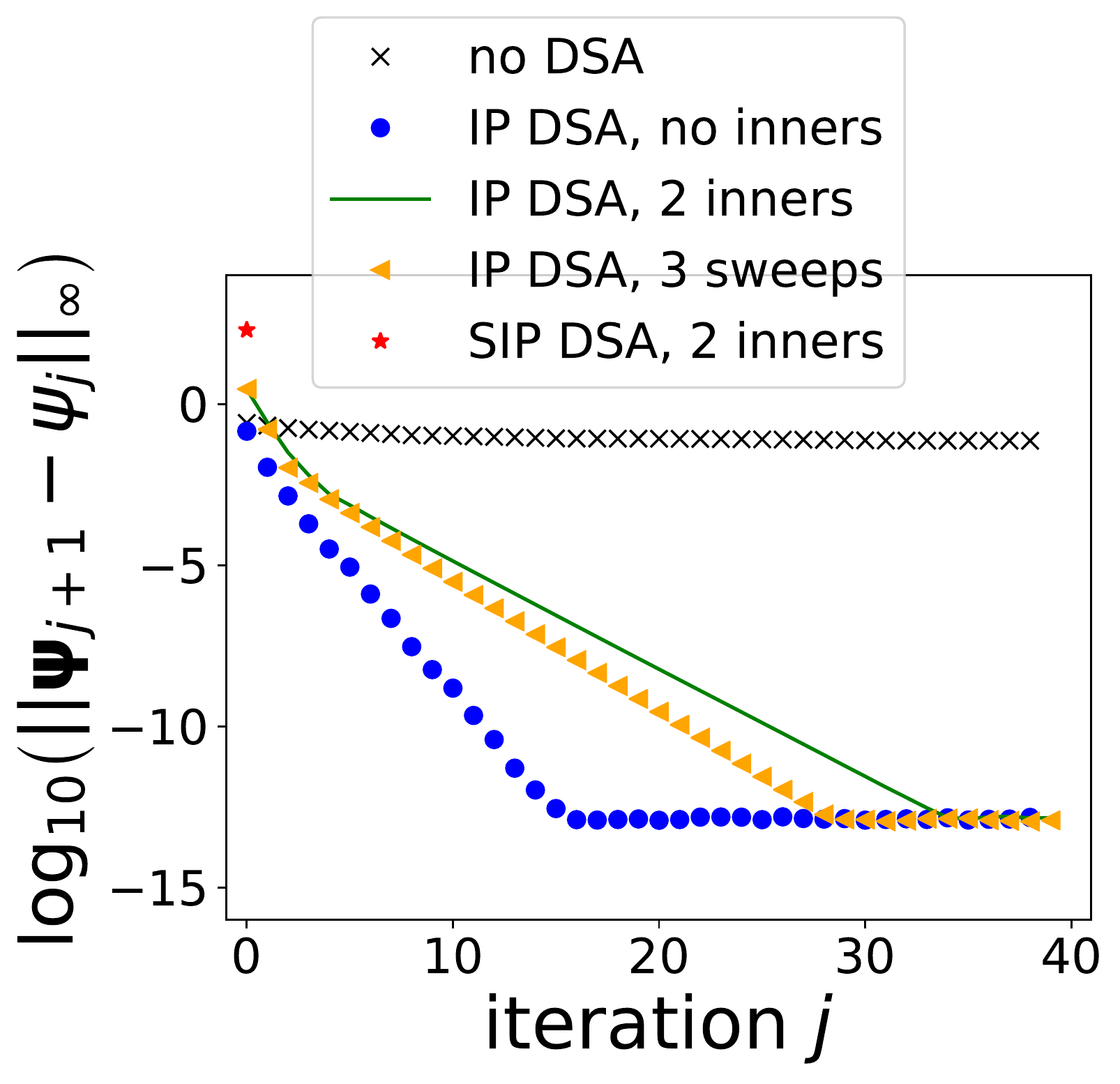} }\\
\subfloat[$\varepsilon = 10^{-1}$]
{ \includegraphics[width = 2in]{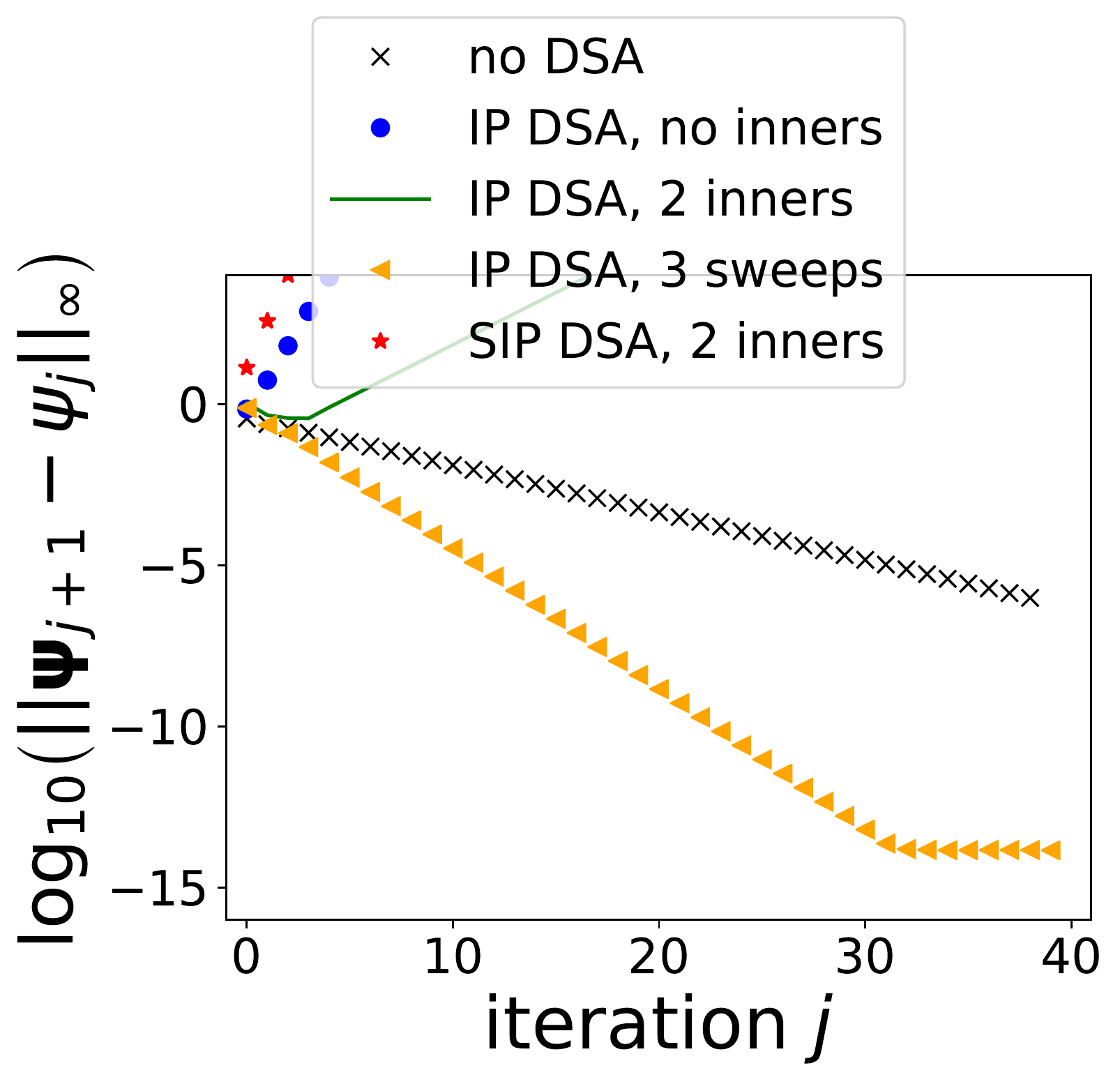} }
\subfloat[$\varepsilon = 0.75$]
{ \includegraphics[width = 2in]{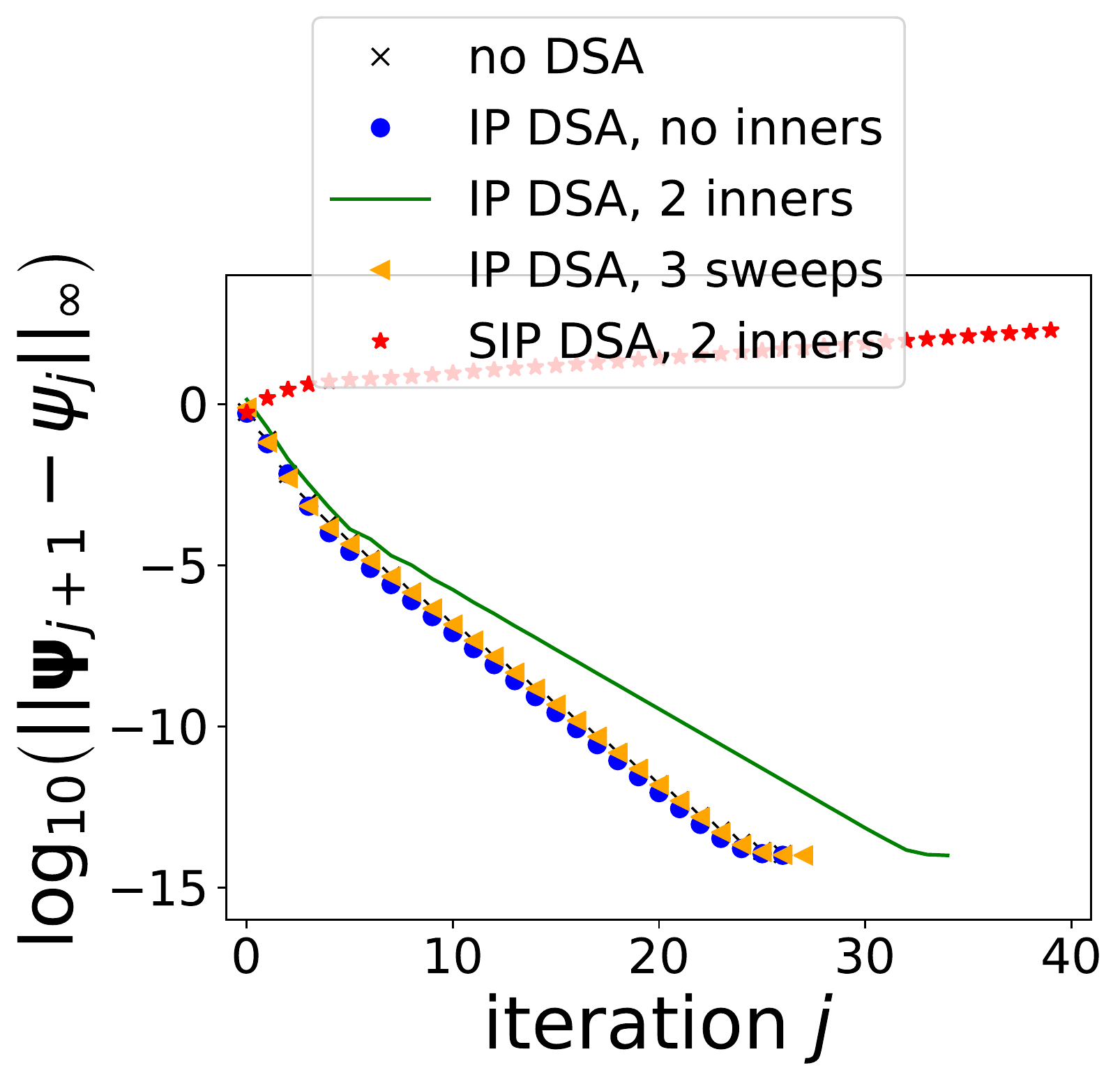} }
\caption{Iteration error estimate
         $\|\boldsymbol{\psi}_{j+1}-\boldsymbol{\psi}_{j+1}\|_{\infty}$ as a
         function of iteration index $j$ on the 3D Rayleigh-Taylor mesh,
         with and without DSA preconditioning using the DSA matrix
         \eqref{eq:modified DSA matrix}. }
\end{figure}

\begin{table}[!ht]
\label{tab_residual_3D}
\begin{center}
\caption{The final residual (\ref{eq:residual}) after 40 iterations with
         and without DSA preconditionin for the 3D Rayleigh-Taylor mesh.}
\begin{tabular}{|c|c|c|c|c|c|}
\hline 
$\varepsilon$ & IP DSA, 3 sweeps
              & IP DSA, 2 inners
              & SIP DSA, 2 inners
              & IP DSA, no inners
              & no DSA   \tabularnewline
\hline 
\hline 
0.75 & 9.97e-15 & {\color{black} 9.96e-15} & diverged & 9.90e-15 & 9.99e-15  \\
\hline 
1e-1 & 1.43e-14 & {\color{black} diverged} & diverged & diverged & 6.94e-07  \\
\hline 
1e-2 & 1.21e-13 & {\color{black} 1.45e-13} & diverged & 1.43e-13 & 7.31e-02  \\
\hline 
1e-3 & 1.36e-10 & {\color{black} 1.36e-10} & diverged & 1.14e-10 & 9.97e-02  \\
\hline 
1e-4 & 1.32e-07 & {\color{black} 1.57e-7} & 1.25e-07 & 1.43e-07 & 2.04e-01  \\
\hline 
\end{tabular}
\end{center}
\end{table}

\subsection{Additive DSA Preconditioning (Theorem \ref{th:new_DSA})}
\label{sec:numerical:th4}

\cred{In the thick regime, the DSA matrix derived in Theorem \ref{th:MIP} is effective when it can be readily inverted}; however, its condition number
scales like $1/\varepsilon$ (in addition to the standard $1/h^2$ scaling). In addition to the fact that even standard DG
discretizations of elliptic problems can be difficult for fast linear solvers such as AMG, inverting the discrete diffusion
operator derived here is not trivial. Theorem \ref{th:new_DSA} developed a two-part additive DSA preconditioner that
requires inverting a continuous  diffusion discretization three times and a mass-matrix like operator once, both of
which are more tractable to quickly (approximately) invert in parallel. This section demonstrates on a 1D-domain that
the derived two-part DSA preconditioner is indeed effective with respect to convergence of the larger iteration. \cred{Like the IP DSA 
variant, our results indicate that the new DSA variant based on Theorem \ref{th:new_DSA} is robust across all values of $\varepsilon$
(thin, intermediate, and thick). In contrast, the SIP DSA preconditioner actually results in a divergent iteration for $\varepsilon = 1e-3$
in the problem detailed below (however, as detailed in \cite{Wang-Ragusa-2010}, the SIP DSA preconditioner can be stabilized outside of
the thick limit by modifying the penalty coefficient appropriately}. {\color{black} We note that the variant of DSA based on Theorem \ref{th:new_DSA} is still significantly less effective as the IP DSA variant when $\varepsilon$ is in the intermediate regimes.  } 

Let $\sigma_a$ and $\sigma_t$ be defined as before, with zero inflow boundary conditions, and source term
\begin{align*}
q(x,mu) = \varepsilon \left(2 \sin(3x^2)^2 + \cos(x/3)^2\right).
\end{align*}
Figure \ref{fig:th4} plots the global residual as a function of iteration number for no DSA, SIP DSA, NIP DSA, and DSA based on Theorem \ref{th:new_DSA}.
\cred{ We use $6^{\text{th}}$ order local basis functions,
$100$ mesh elements, and an S4 quadrature discretization. } {\color{black} Note that, in \ref{fig:th4}, the caption ``IP DSA'' denotes the same option as the ``IP DSA, 2 inners'' option from the 2D and 3D results, since in the 1D case there is no need to handle mesh cycles in the transport sweep.}

\begin{figure}
\centering
\label{fig:th4}
\subfloat[$\varepsilon = 10^{-4}$]{ \includegraphics[width = 2in]{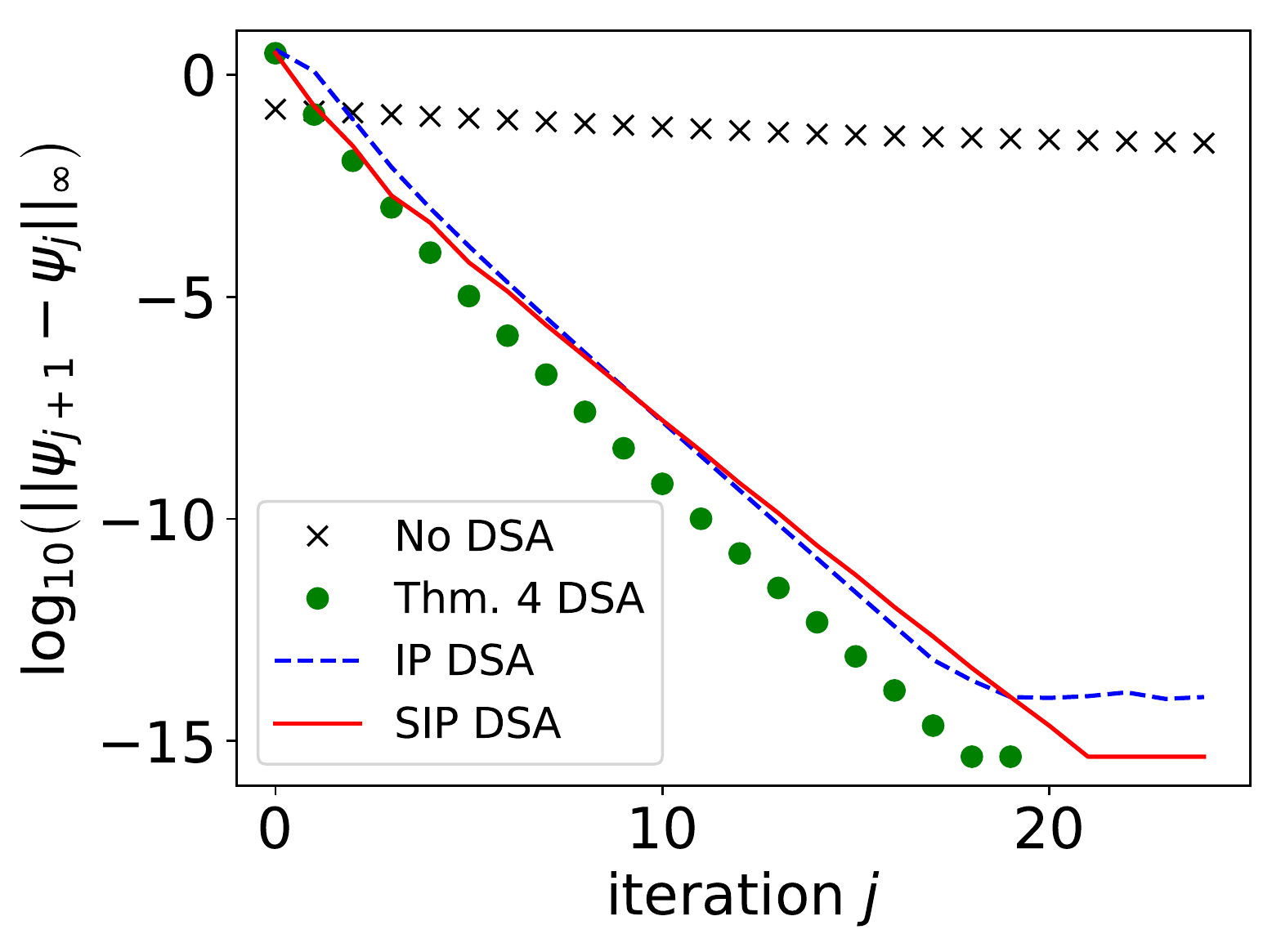} }
\subfloat[$\varepsilon = 10^{-3}$]{ \includegraphics[width = 2in]{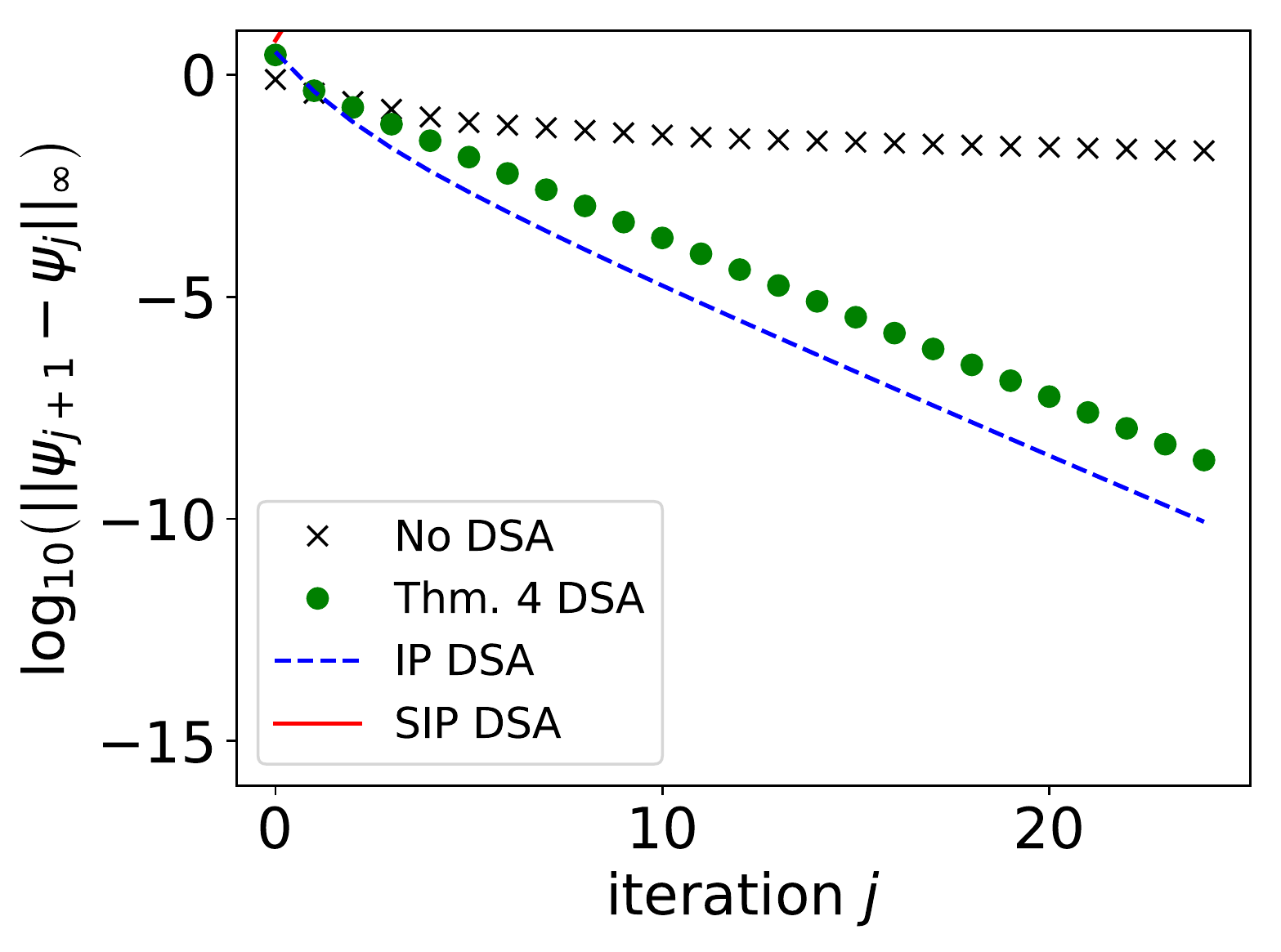}  }
\subfloat[$\varepsilon = 10^{-2}$]{ \includegraphics[width = 2in]{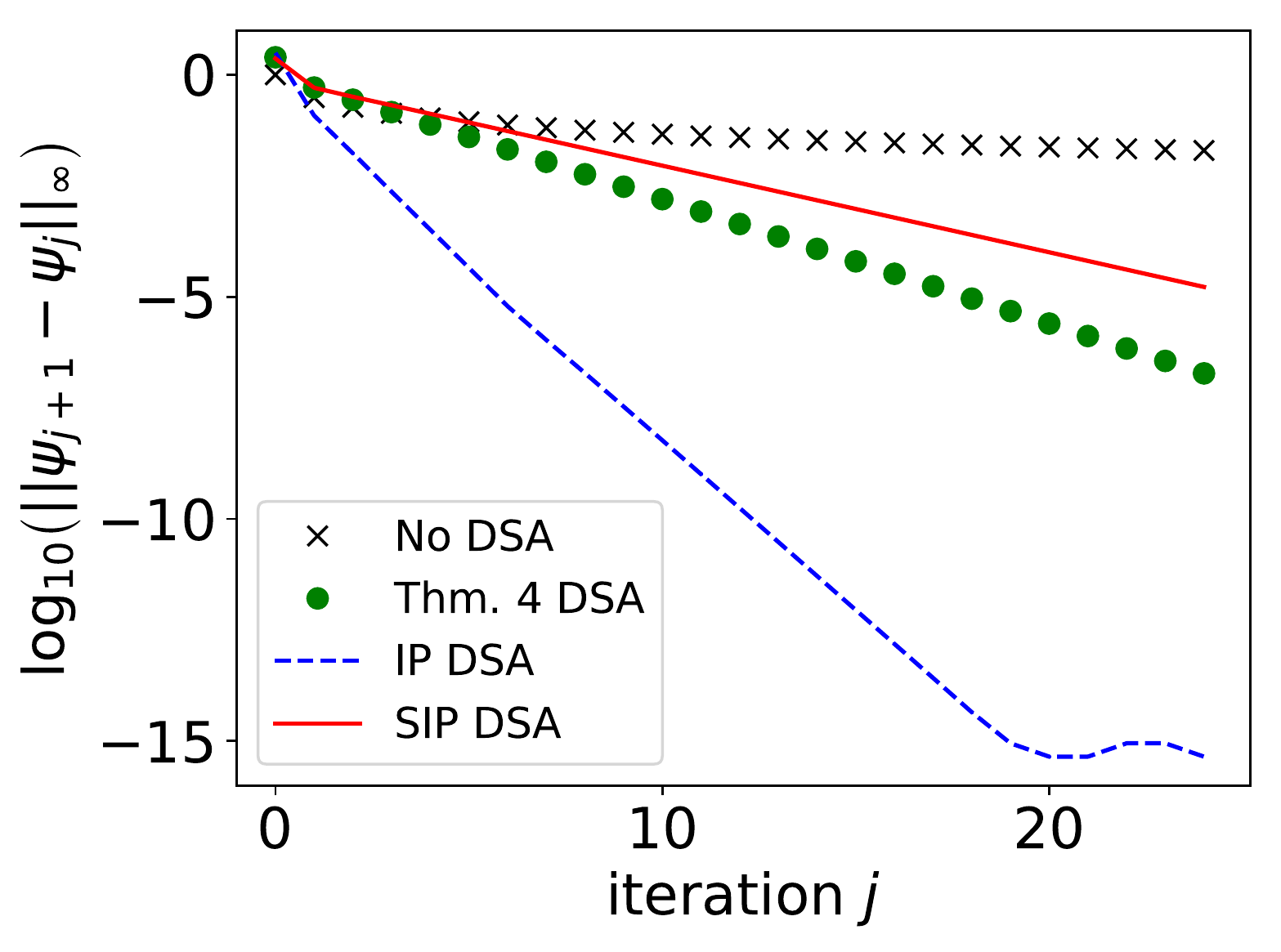} }\\
\subfloat[$\varepsilon = 0.1$]{ \includegraphics[width = 2in]{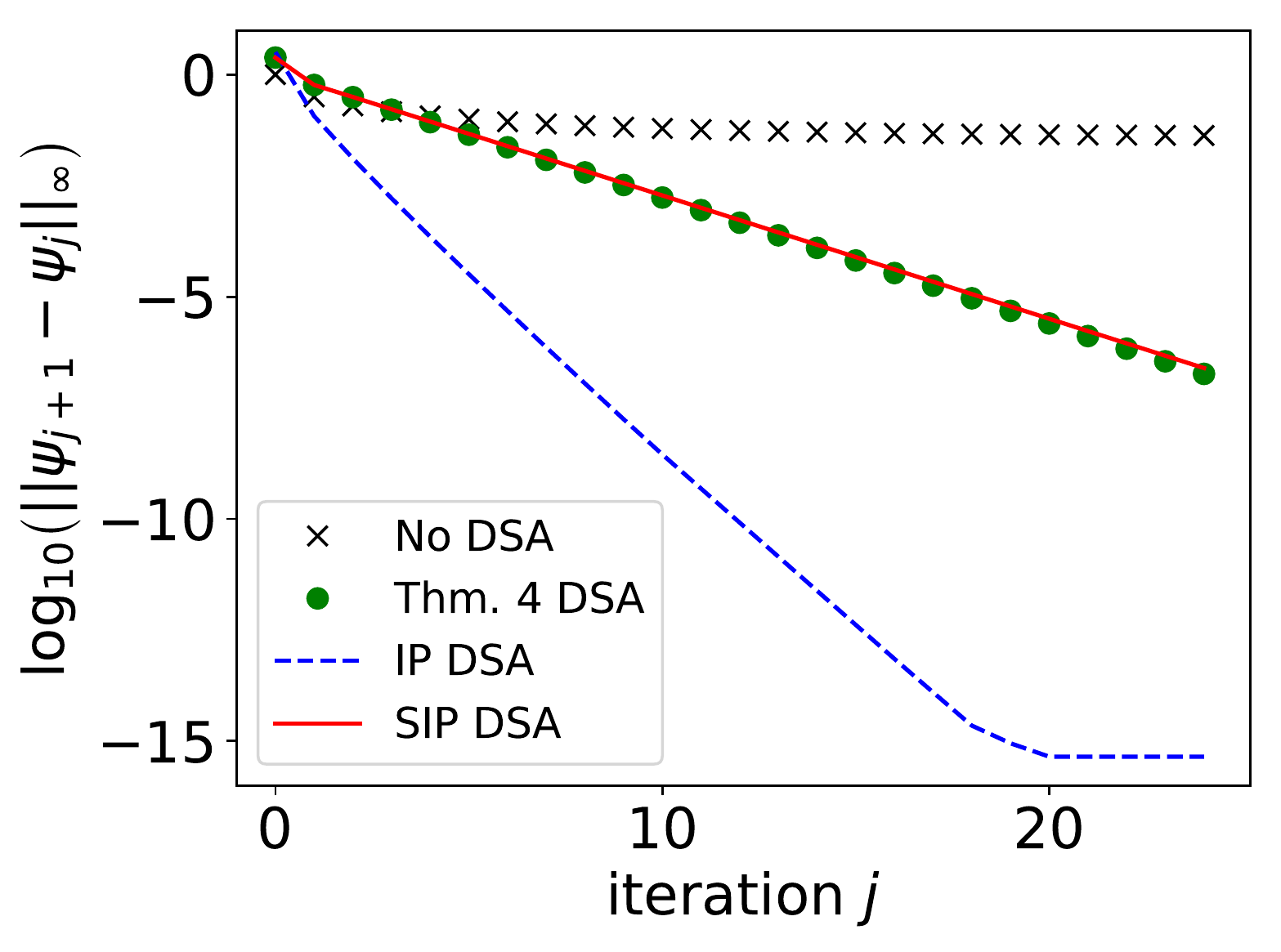} }
\subfloat[$\varepsilon = 0.5$]{ \includegraphics[width = 2in]{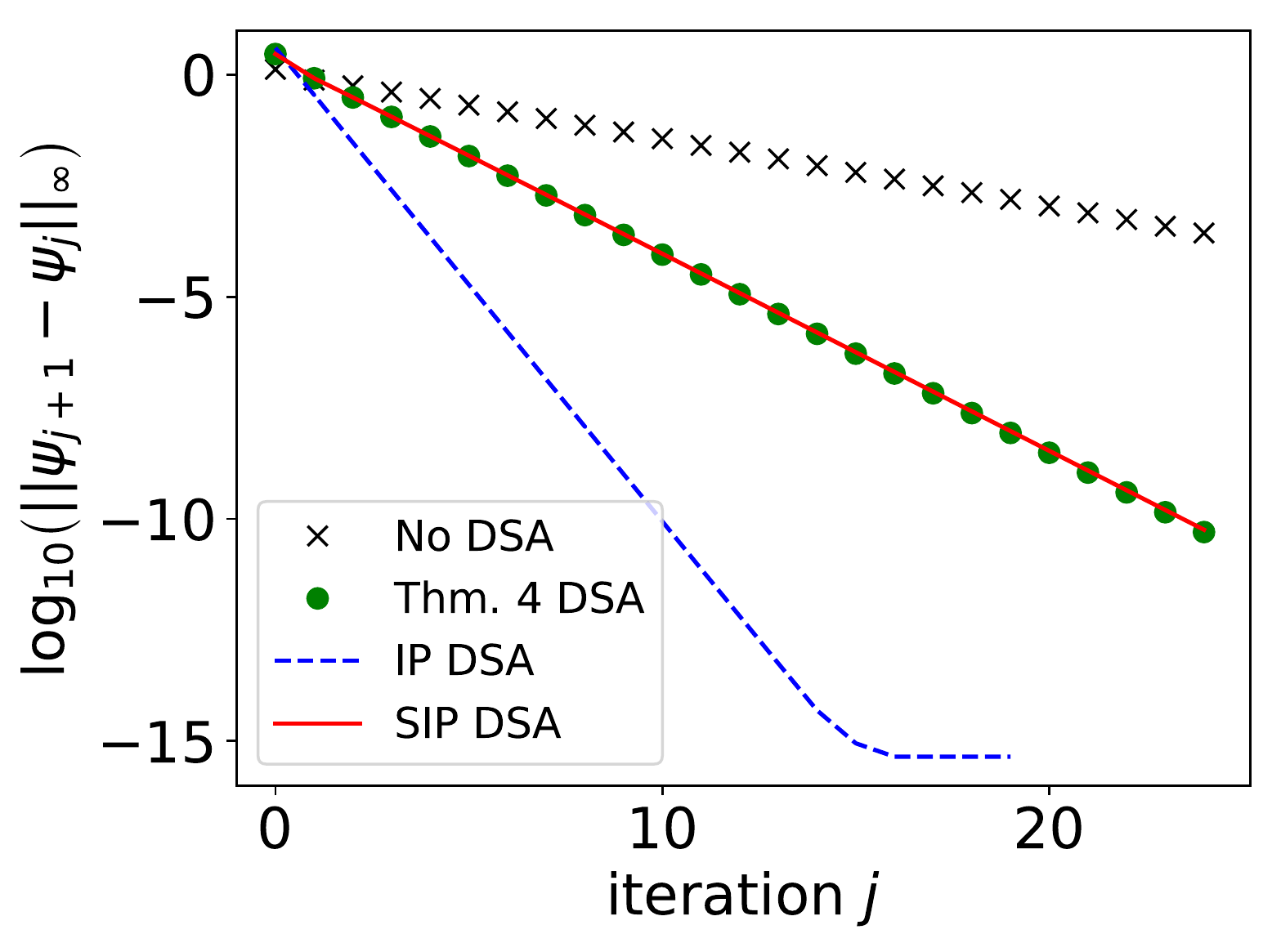} }
\subfloat[$\varepsilon = 0.75$]{ \includegraphics[width = 2in]{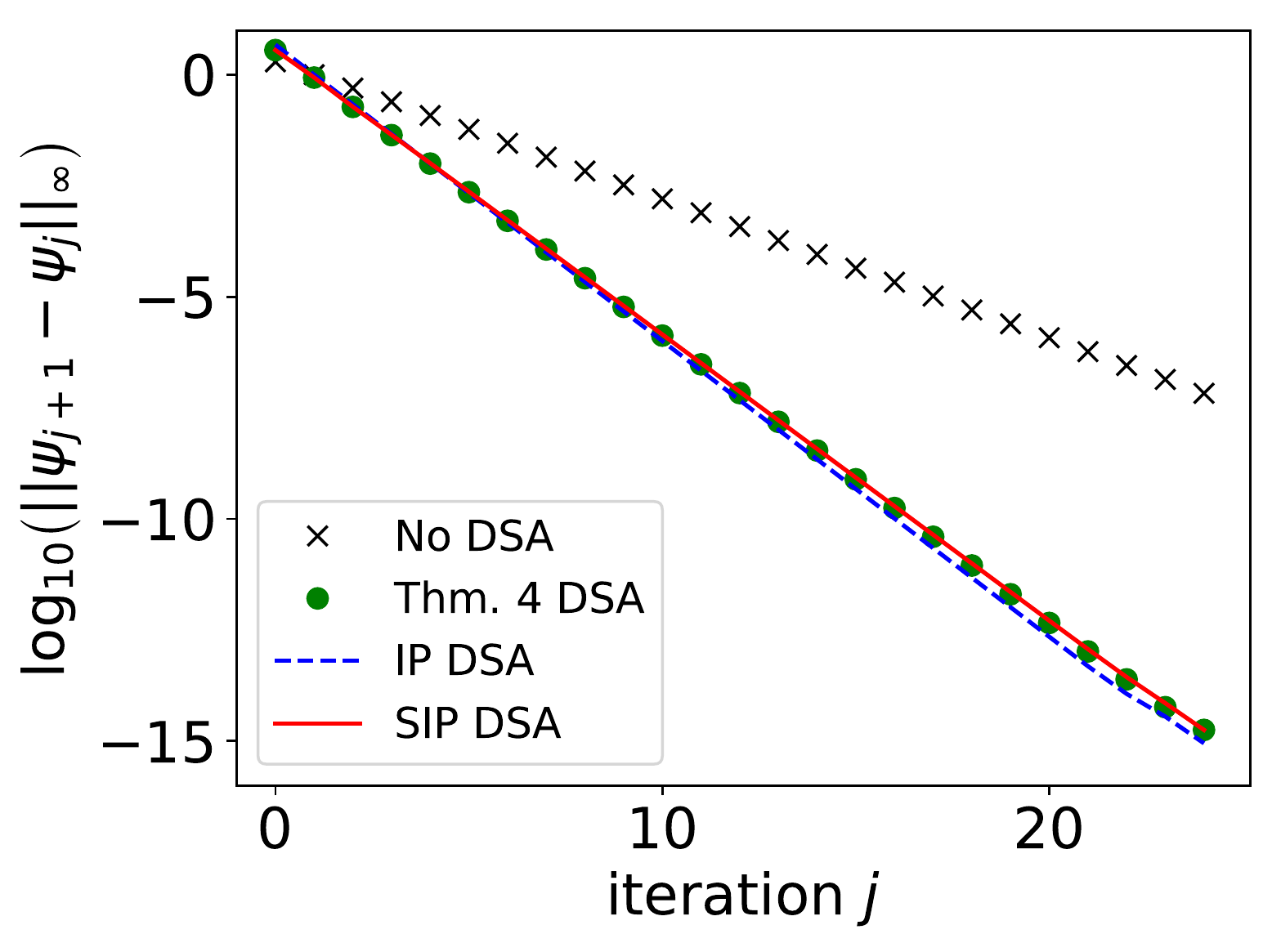} }
\caption{$\ell^\infty$-error, $\|\boldsymbol{\hat{\psi}} - \boldsymbol{\psi}_{j}\|_{\infty}$, as a function of iteration
number $j$, where $\boldsymbol{\hat{\psi}}$ is the exact solution and $\boldsymbol{\psi}_{j}$ the
$j$th iterate. Error is shown for no DSA, SIP DSA, IP DSA, and a two-part DSA preconditioner based on
Theorem \ref{th:new_DSA}.}
\end{figure}

\section{Conclusions\label{sec:Conclusions}}

This paper derives a discrete analysis of DSA applied to high-order DG discretizations of the $S_N$ transport equations. 
The basis for DSA is taking a simple fixed-point ``source iteration,'' which is slow to converge, and recognizing that the
slowly decaying error modes can be represented by a certain diffusion operator. DSA then preconditions source iteration
with an appropriate diffusion solve, as a correction for these slowly decaying error modes. When the mean free path of
particles is very small, $\varepsilon \ll 1$, conditioning of source iteration is $\mathcal{O}(1/\varepsilon^2)$, and
DSA is critical for convergence.

Here, we derive a discrete representation of the slowly decaying error modes for small $\varepsilon$. This
leads to the development of a DSA preconditioner that resembles a symmetric interior penalty DG discretization
of diffusion-reaction, where the resulting (preconditioned) fixed-point iteration is conditioned like $1+\mathcal{O}(\varepsilon)$
(Theorem \ref{th:MIP}). However, applying this preconditioner requires inverting a DG matrix that is ill-conditioned,
$\kappa \sim\mathcal{O}(1/\varepsilon)$, and, furthermore, elliptic DG discretizations are often difficult for fast preconditioners
such as multigrid. This motivates further analysis, where a two-part additive DSA preconditioner is developed based on
solving a continuous Galerkin (CG) discretization of diffusion-reaction, in addition to a second term that involves
two CG solves, and one solve of a mass-matrix-like term. These solves are now all conditioned independent of
$\varepsilon$ and more amenable to fast solvers such as multigrid. Furthermore, the preconditioner leads to a
larger fixed-point iteration that is well conditioned, $\kappa \sim 1+\mathcal{O}(\varepsilon)$, and will converge
rapidly for small $\varepsilon$ (Theorem \ref{th:new_DSA}). 

Finally, there is larger interest in discretizing HO DG on HO (curved) meshes. Source iteration relies on the
discretization of advection being block triangular in some ordering and, therefore, easily invertible. However,
HO meshes lead to cycles in the mesh, and the resulting discretization of advection in the transport equations
is no longer block triangular. When cycles are present, a method to approximate the inversion of advection
in source iteration through a pseudo-optimal Gauss-Seidel has been developed in \cite{sweep18}. Theorem
\ref{th:mesh_cycles} extends the handling of cycles to cases where DSA is necessary, proving that cycles can
be accounted for by performing an additional two source iterations for each larger DSA iteration. 

\section{Appendix\label{sec:Appendix}}

First we introduce a technical Lemma regarding the linear system
$(I-T_{\varepsilon})\boldsymbol{\psi}^{(d)} = \tilde{\boldsymbol{q}}^{(d)}$,
see (\ref{eq:psi_d}). This then leads to Proposition
\ref{prop:condition number}, which proves that the conditioning of
$(I-T_{\varepsilon})$ is $\mathcal{O}(\varepsilon^{-2}),$ making
effective preconditioning critical for small $\varepsilon$. 
\begin{lemma}
\label{lem:Neumann expansion}Define 
\begin{align}
H^{(d)} & =M_{t}^{-1}\left(\boldsymbol{\Omega}_{d}\cdot\mathbf{G}+F^{(d)}\right),\label{eq:Hd}\\
c_{0} & =\max\left\{ \max_{d}\|H^{(d)}\|,\|M_{t}^{-1}M_{a}\|\right\} ,\nonumber 
\end{align}
and assume that $\varepsilon\|H^{(d)}\|<1.$ Then, the operator in
(\ref{eq:psi_d}) satisfies
\begin{align}
\left( (I-T_{\varepsilon})\boldsymbol{\psi} \right)^{(d)} & =
\left(\boldsymbol{\psi}^{(d)}-\frac{1}{4\pi}\boldsymbol{\varphi}\right)+\varepsilon\frac{1}{4\pi}H^{(d)}\boldsymbol{\varphi}-\nonumber \\
 & \,\,\,\,\,\,\,\,\frac{1}{4\pi}\varepsilon^{2}\left(\left(H^{(d)}\right)^{2}-M_{t}^{-1}M_{a}\right)\boldsymbol{\varphi}+R_{\varepsilon}^{(d)},\label{eq:Neumann expansion for T}
\end{align}
where the norm of the remainder $R_{\varepsilon}^{(d)}$ is bounded
by
\[
\left\Vert R_{\varepsilon}^{(d)} \right\Vert \leq\varepsilon^{3}\frac{1}{4\pi}\left(\frac{c_{0}^{3}}{1-\varepsilon c_{0}}\left(1+\varepsilon^{2}c_{0}\right)+\left(c_{0}^{2}+\varepsilon c_{0}^{3}\right)\right) \| \boldsymbol{\varphi} \|  .
\]
\end{lemma}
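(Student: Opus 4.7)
The plan is to expand $(I+\varepsilon H^{(d)})^{-1}$ as a Neumann series in the definition of $T_\varepsilon$, truncate at third order in $\varepsilon$, and carefully track the discarded terms. This is essentially a pure algebra/bookkeeping exercise; no structural properties of $F^{(d)}$, $\mathbf{G}$, $M_t$, or $M_a$ are needed beyond operator-norm bounds captured by $c_0$.

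First I would rewrite the left-hand side in a clean scalar-flux form. Since $(P_0\boldsymbol{\psi})^{(d)}=\frac{1}{4\pi}\boldsymbol{\varphi}$ for every $d$, the definition of $T_\varepsilon$ gives
\[
\bigl((I-T_\varepsilon)\boldsymbol{\psi}\bigr)^{(d)}
=\boldsymbol{\psi}^{(d)}-\frac{1}{4\pi}\bigl(I+\varepsilon H^{(d)}\bigr)^{-1}\bigl(I-\varepsilon^2 M_t^{-1}M_a\bigr)\boldsymbol{\varphi}.
\]
This reduces the statement to an expansion of $(I+\varepsilon H^{(d)})^{-1}(I-\varepsilon^2 M_t^{-1}M_a)\boldsymbol{\varphi}$.

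Next, I would invoke the Neumann series. Since $\varepsilon\|H^{(d)}\|<1$,
\[
(I+\varepsilon H^{(d)})^{-1}=I-\varepsilon H^{(d)}+\varepsilon^2(H^{(d)})^2+\mathcal{R}^{(d)},
\qquad
\mathcal{R}^{(d)}=\sum_{k\geq 3}(-\varepsilon H^{(d)})^{k},
\]
with $\|\mathcal{R}^{(d)}\|\leq \varepsilon^3 c_0^3/(1-\varepsilon c_0)$ by geometric summation. Multiplying by $(I-\varepsilon^2 M_t^{-1}M_a)$ and distributing yields
\[
(I+\varepsilon H^{(d)})^{-1}(I-\varepsilon^2 M_t^{-1}M_a)
= I-\varepsilon H^{(d)}+\varepsilon^2\bigl((H^{(d)})^2-M_t^{-1}M_a\bigr)+S_\varepsilon^{(d)},
\]
where $S_\varepsilon^{(d)}$ collects exactly the three higher-order pieces: $\varepsilon^3 H^{(d)}M_t^{-1}M_a$, $-\varepsilon^4(H^{(d)})^2 M_t^{-1}M_a$, and $\mathcal{R}^{(d)}(I-\varepsilon^2 M_t^{-1}M_a)$. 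Substituting back and defining $R_\varepsilon^{(d)}:=-\tfrac{1}{4\pi}S_\varepsilon^{(d)}\boldsymbol{\varphi}$ produces the claimed identity \eqref{eq:Neumann expansion for T}.

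Finally, I would bound $R_\varepsilon^{(d)}$ term-by-term using $\|H^{(d)}\|\leq c_0$ and $\|M_t^{-1}M_a\|\leq c_0$: the first piece contributes $\varepsilon^3 c_0^2\|\boldsymbol{\varphi}\|/(4\pi)$, the second contributes $\varepsilon^4 c_0^3\|\boldsymbol{\varphi}\|/(4\pi)=\varepsilon^3(\varepsilon c_0^3)\|\boldsymbol{\varphi}\|/(4\pi)$, and the Neumann tail contributes $\varepsilon^3 c_0^3(1+\varepsilon^2 c_0)\|\boldsymbol{\varphi}\|/\bigl((4\pi)(1-\varepsilon c_0)\bigr)$. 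Summing gives exactly the stated bound. There is no genuine obstacle here; the only mild care needed is in grouping the three distinct higher-order pieces so that their norms match the three summands inside the parenthesis of the stated remainder bound, rather than producing a loose single geometric-series estimate.
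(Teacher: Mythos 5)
Your proposal is correct and follows essentially the same route as the paper: expand $(I+\varepsilon H^{(d)})^{-1}$ to second order with the third-order tail (your Neumann tail $\sum_{k\ge 3}(-\varepsilon H^{(d)})^{k}$ is exactly the paper's term $-\varepsilon^{3}(H^{(d)})^{3}(I+\varepsilon H^{(d)})^{-1}$), collect the $\mathcal{O}(\varepsilon^{3})$ pieces into $R_{\varepsilon}^{(d)}$, and bound them term by term using $c_{0}$ and $\|(I+\varepsilon H^{(d)})^{-1}\|\le (1-\varepsilon\|H^{(d)}\|)^{-1}$. The grouping of the cross terms $\varepsilon^{3}H^{(d)}M_t^{-1}M_a-\varepsilon^{4}(H^{(d)})^{2}M_t^{-1}M_a$ differs only cosmetically from the paper's $\varepsilon^{3}H^{(d)}(I-\varepsilon H^{(d)})M_t^{-1}M_a$ and yields the identical bound.
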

\begin{proof}
Note the matrix identity,
\begin{equation}
\left(I+\varepsilon H^{(d)}\right)^{-1}=I-\varepsilon H^{(d)}+\varepsilon^{2}\left(H^{(d)}\right)^{2}-\varepsilon^{3}\left(H^{(d)}\right)^{3}\left(I+\varepsilon H^{(d)}\right)^{-1}.\label{eq:IplusH_inv}
\end{equation}
Plugging into the definition of $T_{\varepsilon}$ and expanding yields
\begin{align*}
\left( (I-T_{\varepsilon})\boldsymbol{\psi}\right)^{(d)} & =\left[I-\frac{1}{4\pi}\left(I+\varepsilon H^{(d)}\right)^{-1}\left(I-\varepsilon^{2}M_{t}^{-1}M_{a}\right)P_{0}\right]\boldsymbol{\psi}^{(d)}\\
 & =\boldsymbol{\psi}^{(d)}-\frac{1}{4\pi}\left[I-\varepsilon H^{(d)}+\varepsilon^{2}\left(\left(H^{(d)}\right)^{2}-M_{t}^{-1}M_{a}\right)-\right.\\
 & \qquad\varepsilon^{3}\left(\left(H^{(d)}\right)^{3}\left(I+\varepsilon H^{(d)}\right)^{-1}-H^{(d)}M_{t}^{-1}M_{a}\right)-\\
 & \qquad\left.\varepsilon^{4}\left(H^{(d)}\right)^{2}M_{t}^{-1}M_{a}+\varepsilon^{5}\left(H^{(d)}\right)^{3}\left(I+\varepsilon H^{(d)}\right)^{-1}M_{t}^{-1}M_{a}\right]\boldsymbol{\varphi}.
\end{align*}
Equation (\ref{eq:Neumann expansion for T}) consists of terms up
to $\mathcal{O}(\varepsilon^{2})$. Collecting higher-order terms
yields the remainder term, $R_{\varepsilon}^{(d)}$, given by
\begin{align*}
R_{\varepsilon}^{(d)} & =\frac{1}{4\pi}\varepsilon^{3}\left(H^{(d)}\right)^{3}\left(I+\varepsilon H^{(d)}\right)^{-1}\left(I-\varepsilon^{2}M_{t}^{-1}M_{a}\right) \boldsymbol{\varphi}+\\
 & \,\,\,\,\,\,\varepsilon^{3}H^{(d)}\left(I-\varepsilon H^{(d)}\right)M_{t}^{-1}M_{a} \boldsymbol{\varphi}.
\end{align*}
The bound on $\left\Vert R_{\varepsilon}^{(d)}\right\Vert $ follows
from the identity $\left\Vert \left(I+\varepsilon H^{(d)}\right)^{-1}\right\Vert \leq\frac{1}{1-\varepsilon\left\Vert H^{(d)}\right\Vert }.$\end{proof}

Before stating Proposition~\ref{prop:condition number}, we set up
preliminary notation. First, the linear system (\ref{eq:psi_d})
can be written in the form
\begin{equation}
I-T_{\varepsilon}=I-H_{\varepsilon}P_{0}, \label{eq:I-H*P 2}
\end{equation}
where $T_{\varepsilon}=H_{\varepsilon}P_{0}$ and $H_{\varepsilon}$
is defined via
\[
  \left( H_{\varepsilon}\boldsymbol{\psi} \right)^{(d)} = 
  \left(I+\varepsilon M_{t}^{-1}\left(\boldsymbol{\Omega}_{d} \cdot \mathbf{G}+F^{(d)}\right)\right)^{-1}\frac{1}{4\pi}\left(I-\varepsilon^{2}M_{t}^{-1}M_{a}\right)\boldsymbol{\psi}^{(d)},
\]
for $d=1,\ldots,N_{\Omega}$. 

Notice that
\begin{equation}
\left(P_{0}\left(I-T_{\varepsilon}\right)P_{0}\boldsymbol{\psi}\right)^{(d)}=\left(I-S_{\varepsilon}\right)\left( P_{0}\boldsymbol{\psi} \right)^{(d)},\label{eq:Te_Se}
\end{equation}
where $S_{\varepsilon}$ is defined in equation (\ref{eq:sweep matrix-2}).
Also, from Lemma~\ref{lem:Neumann expansion},
\begin{equation}
I-T_{\varepsilon}=Q_{0}+\varepsilon H_{0}P_{0}+\varepsilon^{2}H_{1}P_{0}+\mathcal{O}\left(\varepsilon^{3}\right),\label{eq:Neumann expansion, proof}
\end{equation}
where $H_{i}$ denotes block-diagonal in $d$ matrices, for $i=1,...,N_{\Omega}$
as in (\ref{eq:Neumann expansion for T}); in particular,
\[
\left( H_{0} \right)_{d,d} =\frac{1}{4\pi}H^{(d)}=\frac{1}{4\pi}M_{t}^{-1}\left(\boldsymbol{\Omega}_{d}\cdot\mathbf{G}+F^{(d)}\right), \,\,\,\,\, \left( H_{1} \right)_{d,d} = \frac{1}{4\pi} \left(\left(H^{(d)}\right)^{2}-M_{t}^{-1}M_{a}\right) .
\]
Finally, define $F_{0}=\frac{1}{4\pi}\sum_{d}w_{d}F^{(d)}$. Then
using $\sum_{d}w_{d}\boldsymbol{\Omega}_{d}=\mathbf{0}$, it follows
that
\begin{equation}
\left(P_{0}H_{0}P_{0}\boldsymbol{\psi}\right)^{(d)}=M_{t}^{-1}F_{0}\boldsymbol{\varphi},\,\,\,\,d=1,\ldots,N_{\Omega}.\label{eq:P0H0P0}
\end{equation}

We now prove Proposition~\ref{prop:condition number}.

\begin{proof}
First, choose some unit norm vector $\boldsymbol{\psi}$ for which
$Q_{0}\boldsymbol{\psi}=\boldsymbol{\psi}$. Then, using equation
(\ref{eq:I-H*P 2}),
\[
\left\Vert I-T_{\varepsilon}\right\Vert_W \geq\left\Vert \left(I-T_{\varepsilon}\right)Q_{0}\boldsymbol{\psi}\right\Vert_W =\left\Vert Q_{0}\boldsymbol{\psi}\right\Vert _W =1.
\]
For the inverse, $(I-T_{\varepsilon})\mathbf{x}=\mathbf{y}$ can be
decomposed based on $P_{0}$ and $Q_{0}$ via $(I-T_{\varepsilon})(P_{0}\mathbf{x}+Q_{0}\mathbf{x})=(P_{0}\mathbf{y}+Q_{0}\mathbf{y})$.
Multiplying on the left by the full-column-rank operator $(P_{0};Q_{0})$
yields the equivalent linear system
\begin{equation}
\begin{pmatrix}P_{0}\\
Q_{0}
\end{pmatrix}(I-T_{\varepsilon})\begin{pmatrix}P_{0} & Q_{0}\end{pmatrix}\begin{pmatrix}P_{0}\mathbf{x}\\
Q_{0}\mathbf{x}
\end{pmatrix}=\begin{pmatrix}P_{0}\\
Q_{0}
\end{pmatrix}(P_{0}\mathbf{y}+Q_{0}\mathbf{y}).\label{eq:blocksystem}
\end{equation}
Denote $\mathbf{x}_{P}=P_{0}\mathbf{x}$ and $\mathbf{x}_{Q}=Q_{0}\mathbf{x}$,
and likewise for $\mathbf{y}$. Then (\ref{eq:blocksystem}) yields
a $2\times2$ set of equations, which, noting the expansion from Lemma
\ref{lem:Neumann expansion} and (\ref{eq:Neumann expansion, proof})
and the orthogonality of $P_{0}$ and $Q_{0}$, reduces to
\begin{equation}
\begin{pmatrix}P_{0}(I-T_{\varepsilon})P_{0} & \mathbf{0}\\
Q_{0}(I-T_{\varepsilon})P_{0} & Q_{0}
\end{pmatrix}\begin{pmatrix}\mathbf{x}_{p}\\
\mathbf{x}_{Q}
\end{pmatrix}=\begin{pmatrix}\mathbf{y}_{P}\\
\mathbf{y}_{Q}
\end{pmatrix}.\label{eq:2x2}
\end{equation}
Here, $\mathbf{x}_{P}$ is fully determined by inverting $P_{0}(I-T_{\varepsilon})P_{0}$
on the range of $P_{0}$. This is equivalent to inverting $I-S_{\varepsilon}$
(\ref{eq:Te_Se}), which is assumed to be full rank. Now, choose some
vector $\hat{\mathbf{x}}$ for which $P_{0}\boldsymbol{\hat{\mathbf{x}}}=\boldsymbol{\hat{\mathbf{x}}}$,
where each direction block $\hat{\mathbf{x}}_{d}$ corresponds to
a continuous function. From (\ref{eq:F*P and PT * tildeFd}), we have
that $F_{0}P_{0}\hat{\mathbf{x}}=\mathbf{0}$ and from (\ref{eq:P0H0P0})
$P_{0}H^{(d)}P_{0}\hat{\mathbf{x}}=\mathbf{0}$. From equation (\ref{eq:Neumann expansion, proof}),
this yields
\begin{equation}
\left(P_{0}\left(I-T_{\varepsilon}\right)P_{0}\hat{\mathbf{x}}\right)=\mathcal{O}\left(\varepsilon^{2}\right)P_{0}\hat{\mathbf{y}}.\label{eq:haty}
\end{equation}
Recall by orthogonality, $\|\mathbf{y}\|_{W}=\|\mathbf{y}_{P}\|_{W}+\|\mathbf{y}_{Q}\|_{W}$.
Now define a vector $\tilde{\mathbf{y}}$ such that $\tilde{\mathbf{y}}_{P}=P_{0}\hat{\mathbf{y}}$
from (\ref{eq:haty}) and $\tilde{\mathbf{y}}_{Q}=\mathbf{0}$, and
let $\tilde{\mathbf{x}}=(I-T_{\varepsilon})^{-1}\tilde{\mathbf{y}}$.
Then, in the notation of (\ref{eq:2x2}),
\begin{align*}
\|(I-T_{\varepsilon})^{-1}\|_{W} & =\sup_{\mathbf{\|y\|_{W}}=1}\|(I-T_{\varepsilon})^{-1}\mathbf{y}\|_{W}=\sup_{\mathbf{\|y\|_{W}}=1}\|\mathbf{x}_{P}\|_{W}+\|\mathbf{x}_{Q}\|_{W}\\
 & \qquad\geq\|\tilde{\mathbf{x}}_{P}\|_{W}+\|\tilde{\mathbf{x}}_{Q}\|_{W}=\mathcal{O}(\varepsilon^{-2})\|\hat{\mathbf{x}}_{P}\|_{W}+\|\tilde{\mathbf{x}}_{Q}\|_{W}\\
 & \qquad\geq\mathcal{O}(\varepsilon^{-2})\|\hat{\mathbf{x}}_{P}\|_{W}=\mathcal{O}(\varepsilon^{-2}).
\end{align*}

To prove equation (\ref{eq:DSA system}), note that
\begin{align*}
\tilde{A}_{\varepsilon} & =\left(E_{\varepsilon}P_{0}+Q_{0}\right)\left(I-T_{\varepsilon}\right)\\
 & =E_{\varepsilon}P_{0}\left(I-T_{\varepsilon}\right)P_{0}+Q_{0}\left(I-T_{\varepsilon}\right)\\
 & =P_{0}+\mathcal{O}\left(\varepsilon\right)+Q_{0}-Q_{0}T_{\varepsilon}\\
 & =I+\mathcal{O}\left(\varepsilon\right)-Q_{0}T_{\varepsilon}\\
 & =I+\mathcal{O}\left(\varepsilon\right).
\end{align*}
In the second equality, we used the assumption that
$E_{\varepsilon}P_{0}\left(I-T_{\varepsilon}\right)P_{0}=P_{0}+\mathcal{O}\left(\varepsilon\right)$ and
the identity $P_{0}\left(I-T_{\varepsilon}\right)=P_{0}\left(I-T_{\varepsilon}\right)P_{0}.$
In the last equality, we used that $Q_{0}T_{\varepsilon}=\mathcal{O}\left(\varepsilon\right),$which
follows from equation (\ref{eq:Neumann expansion, proof}).
\end{proof}
\begin{rem}
Letting $h_{\mathbf{x}}$ denote the characteristic mesh spacing,
the assumption $\varepsilon\left\Vert H^{(d)}\right\Vert <1$ in Lemma~\ref{lem:Neumann expansion}
holds if $\varepsilon\lesssim\sigma_{t}h_{\mathbf{x}}$, which corresponds
to the optically thick limit.
\end{rem}

\section*{Acknowledgements}
We would like to thank Jim Warsa for pointing out the equivalence between the nonsymmetric interior penalty method explored in this paper 
and his consistent P1 diffusion discretization, as well as helping us understand many of the nuances of $S_N$ transport preconditioning. We would also like to thank Jim Morel for his many insightful comments,
and for suggesting the need to perform additional transport sweeps when there are mesh cycles.

\bibliographystyle{plain}
\bibliography{pgm_references}

\end{document}